\DeclareMathAlphabet\EuRoman{U}{eur}{m}{n}
\SetMathAlphabet\EuRoman{bold}{U}{eur}{b}{n}
\crefname{assumption}{Assumption}{Assumptions}
\crefname{claim}{Claim}{Claims}
\let\reftagform@=\tagform@
\def\tagform@#1{\maketag@@@{\ignorespaces\textcolor{gray}{(#1)}\unskip\@@italiccorr}}
\renewcommand{\eqref}[1]{\textup{\reftagform@{\ref{#1}}}}
\definecolor{WowColor}{rgb}{.75,0,.75}
\definecolor{SubtleColor}{rgb}{0,0,.50}
\newcounter{margincounter}
\declaretheorem[style=plain,numberwithin=section,name=Theorem]{theorem}
\declaretheorem[style=plain,sibling=theorem,name=Lemma]{lemma}
\declaretheorem[style=plain,sibling=theorem,name=Corollary]{corollary}
\declaretheorem[style=plain,sibling=theorem,name=Claim]{claim}
\declaretheorem[style=definition,sibling=theorem,name=Definition]{definition}
\declaretheorem[style=definition,sibling=theorem,name=Example]{example}
\declaretheoremstyle[
    spaceabove=-6pt,
    spacebelow=6pt,
    headfont=\normalfont\bfseries,
    bodyfont = \normalfont,
    postheadspace=1em,
    qed=$\square$,
    headpunct={{}}]{myproofstyle}
\numberwithin{equation}{section}
\numberwithin{theorem}{section}
\def\[#1\]{\begin{align}#1\end{align}}
\def\*[#1\]{\begin{align*}#1\end{align*}}
\newcommand{\Reals}{\mathbb{R}}
\newcommand{\Nats}{\mathbb{N}}
\newcommand{\PosReals}{\Reals_{> 0}}
\newcommand{\dee}{\mathrm{d}}
\DeclareMathOperator*{\newlim}{\mathrm{lim}\vphantom{\mathrm{infsup}}}
\DeclareMathOperator*{\newmax}{\mathrm{max}\vphantom{\mathrm{infsup}}}
\DeclareMathOperator*{\newinf}{\mathrm{inf}\vphantom{\mathrm{infsup}}}
\DeclareMathOperator*{\newsup}{\mathrm{sup}\vphantom{\mathrm{infsup}}}
\renewcommand{\lim}{\newlim}
\renewcommand{\max}{\newmax}
\renewcommand{\inf}{\newinf}
\renewcommand{\sup}{\newsup}
\newcommand{\cF}{\mathcal F}
\newcommand{\BorelSets}[1]{\mathcal{B}[#1]}
\newcommand{\NSE}[1]{{^{*}#1}}
\newcommand{\ST}{\mathsf{st}}
\newcommand{\PowerSet}{\mathscr{P}}
\newcommand{\NS}[1]{\mathrm{NS}(#1)}
\newcommand{\cA}{\mathcal{A}}
\newcommand{\cC}{\mathcal{C}}
\newtheorem{open problem}{Open Problem}
\newcommand{\Loeb}[1]{\overline{#1}}
\newcommand{\interior}[1]{%
  {\kern0pt#1}^{\mathrm{o}}%
}
\newcommand{\refproof}[1]{See \cref{#1} for \IfSubStr{#1}{,}{proofs}{a proof}. }
\newif\iflongform
\providecommand*{\toclevel@definition}{0}
\providecommand*{\toclevel@theorem}{0}
\providecommand*{\toclevel@lemma}{0}
\title[Finitely-additive, countably-additive and internal probability measures.]
{
Finitely-additive, countably-additive and internal probability measures. \footnote{\textit{AMS subject code}: 03H05,26E35,28E05,60B10}
}
\newcommand{\cB}{\mathscr{B}}
\newcommand{\Lip}[1]{\mathcal{L}_{1}(#1)}
\newcommand{\FM}[1]{\mathcal{M}(#1)}
\newcommand{\PM}[1]{\mathcal{M}_{1}(#1)}
\newcommand{\topology}{\mathcal{T}}
\newcommand{\pd}[1]{{#1}_p}
\newcommand{\ipd}[1]{{#1}^{p}}
\newcommand{\boundary}{\partial}
\newcommand{\comp}[1]{\hat{#1}}
\begin{document}

\author[H.~Duanmu]{Haosui Duanmu}
\address{University of Toronto, Department of Statistics}
\email{duanmuhaosui@hotmail.com}
%
%\author[J.~S.~Rosenthal]{Jeffrey S.~Rosenthal}
%\address{University of Toronto,Department of Statistics}
%
\author[W.~Weiss]{William Weiss}
\address{University of Toronto, Department of Mathematics}
\email{weiss@math.utoronto.ca}
\maketitle

%
%\begin{abstract}
%\input{abstract.tex}
%\end{abstract}
\begin{abstract}
We discuss two ways to construct standard probability measures, called push-down measures, from internal probability measures. We show that the Wasserstein distance between an internal probability measure and its push-down measure is infinitesimal.
As an application to standard probability theory, we show that
every finitely-additive Borel probability measure $P$ on a separable metric space is a limit of a sequence of countably-additive Borel probability measures $\{P_n\}_{n\in \Nats}$ in the sense that $\int f \dee P=\lim_{n\to \infty} \int f\dee P_n$ for all bounded uniformly continuous real-valued function $f$ if and only if the space is totally bounded. 

\end{abstract}

\section{Introduction}
One of the foundational problems in probability theory is to study the connection between finitely-additive measures and countably additive measures (see, e.g., \citet{yoshida52}). In contrast to Prokhorov's theorem and the Vitali-Hahn-Saks theorem which state that a sequence of countably additive measures converge to a countably additive measure under regularity conditions, we prove that, for every finitely-additive probability measure $P$ on a totally bounded separable metric space, there is a sequence of countably additive probability measures $\{P_n\}_{n\in \Nats}$ such that $\int f \dee P_n\to \int f\dee P$ for every bounded uniformly continuous real-valued function. On the other hand, unlike the Portmanteau lemma, such convergence fails for merely bounded continuous functions, showing that the hypothesis of the Portmanteau theorem is sharp. 

In \cref{secpre}, we give a gentle introduction to nonstandard analysis as well as nonstandard measure theory. 
Nonstandard measure theory provides powerful machinery to study this problem. 
On one hand internal measures have the same first-order logical properties as finitely-additive measures. On the other hand internal measures can be easily extended to countably additive measures (Loeb measure), using Loeb's construction in \citet{Loeb75}. We can also reverse the procedure to construct finitely-additive probability measures from internal probability measures or Loeb measures. There is a rich literature on constructing standard measures using the standard part map and Loeb measures in very general settings (see, e.g., \citet{anderson87}, \citet{Lindpushdown}, \citet{renderpush}, \citet{aldazdissertation}, \citet{rosscompact}, \citet{aldazrepresent} and \citet{rossinfinite}). On the other hand, by the transfer principle, we can construct finitely-additive probability measures from internal probability measures. 
In this paper, we establish some connections between finitely-additive measures and countably-additive measures by studying the relation between these two forms of push-down, and in doing so, we prove the theorem mentioned in the first paragraph, which has no known standard proof. 

In \cref{starwass}, we define the Wasserstein distance between finitely-additive measures and show that the Wasserstein distance, $\NSE{W}(\nu,\NSE{P})$, between a finitely additive measure $P$ and an internal probability measure $\nu$ is infinitesimal provided that the underlying space is a bounded $\sigma$-compact metric space and $\NSE{P}(\NSE{A})\approx \nu(\NSE{A})$ for all Borel sets $A$.
In contrast to this, we give an example of an internal probability measure $\nu$ with $\nu(\NSE{A})\approx \lambda(A)$ for $A\in \BorelSets{[0,1]}$ where $\lambda$ denote the Lebesgue measure on $[0,1]$. Meanwhile, we have
\[
\sup_{B\in \NSE{\BorelSets {[0,1]}}}|\nu(B)-\NSE{\lambda}(B)|=1.
\]

Given an internal probability measure $\nu$ on $(\NSE{X},\NSE{\BorelSets X})$, the internal push-down measure $\ipd{\nu}$ is a finitely-additive measure on $(X,\BorelSets X)$ defined as $\ipd{\nu}(A)=\ST(\nu(\NSE{A}))$ and the external push-down measure $\pd{\nu}$ is a countably additive measure on $(X,\BorelSets X)$ defined as $\pd{\nu}(A)=\Loeb{\nu}(\ST^{-1}(A))$ where $\Loeb{\nu}$ denote the Loeb extension of $\nu$. \citet{nscredible} showed that $\NSE{W}(\nu, \NSE{\pd{\nu}})\approx 0$ if the underlying space is compact. In \cref{stcharge}, we generalize this result to bounded $\sigma$-compact spaces. We also show that $\NSE{W}(\nu,\ipd{\nu})\approx 0$. Thus, the Wasserstein distance is only a pseudometric on the space of all finitely-additive probability measures. 

There exists a rich literature on studying the relationship between finitely-additive probability measures and countably additive probability measures (e,g., see \citet{yoshida52} and \citet{teddy99}). For an uncountable $\sigma$-algebra $\cF$, the set of finitely-additive probability measures can be viewed as a subset of the compact product space $[0,1]^{\cF}$. Teddy Seidenfeld pointed out that the set of finitely-additive probability measures with finite support forms a dense subset of the set of all finitely-additive probability measures. Thus, every finitely-additive probability measure is an accumulation point of a set of countably additive probability measures. However, no point in $[0,1]^{\cF}$ has a countable local base hence we can not conclude that every finitely-additive probability measure is the limit of a countable sequence of countably additive probability measures under pointwise convergence.

In \cref{weakcharge}, we show that, for every finitely-additive probability measure $P$, there is a sequence of countably-additive probability measures $\{P_n\}_{n\in \Nats}$ such that $\int f \dee P_n$ converges to $\int f\dee P$ for every bounded uniformly continuous real-valued function $f$. We denote such convergence by weak convergence. 
In \cref{mikexample}, we show that our theorem fails if we replace bounded uniformly continuous real-valued function by merely bounded continuous real-valued function, hence our result is sharp. 
We conclude with a nonstandard characterization of weak convergence to finitely-additive probability measures, which is similar in spirit to 
Theorem 4 in \citet{nsweak}.

\section{Preliminaries}\label{secpre}
In this section, we give a short introduction to nonstandard analysis. A large part of this introduction is taken from the preliminary section in \citep{Keisler87}. For a detailed introduction to nonstandard models, we recommend the first four chapters of \citep{NSAA97}. 

Given any set $S$ containing $\Reals$ as a subset, the superstructure $\mathbb{V}(S)$ over $S$ is defined as
\begin{enumerate}
\item $\mathbb{V}_{1}(S)=S$
\item $\mathbb{V}_{n+1}(S)=\mathbb{V}_{n}(S)\cup \PowerSet(S)$
\item $\mathbb{V}(S)=\bigcup_{n\in \Nats}V_{n}(S)$
\end{enumerate}
The starting point of nonstandard analysis is to construct a set $\NSE{\Reals}\supset \Reals$ and a mapping $\ast: \mathbb{V}(\Reals)\mapsto \mathbb{V}(\NSE{\Reals})$ with three basic properties. We first state the following two basic notions from mathematical logic. 
A \emph{formula} is a statement $\phi$ built up from equality and $\in$ relations $x=y$, $x\in y$, the connectives $\wedge, \vee,\neg$ and bounded quantifiers $(\forall x\in y),(\exists x\in y)$. An \emph{internal object} is an element of the set $\bigcup\{\NSE{A}: A\in \mathbb{V}(S)\}$. A set in $\mathbb{V}(S)$ which is not internal is called \emph{external}. We now state the three basic properties.
\begin{enumerate}
\item \textbf{Extension Principle} $\NSE{S}$ is a proper extension of $S$ and $\NSE{s}=s$ for all $s\in S$. 
\item \textbf{Transfer Principle} Let $S_1,\dotsc,S_n\in \mathbb{V}(S)$. Any formula which is true of $S_1,\dotsc,S_n$ is true of $\NSE{S_1},\dotsc,\NSE{S_n}$. 
\item \textbf{$\kappa$-Saturation Principle} Let $\kappa$ be a cardinal number and let $\cF$ be a collection of internal sets. If $\cF$ has the finite intersection property with cardinality no more than $\kappa$, then the total intersection of $\cF$ is non-empty.
\end{enumerate}  
An internal set $A$ is a \emph{hyperfinite set} if there exists an internal bijection $f$ between $A$ and $\{n\in \NSE{\Nats}: n\leq N_0\}$ for some $N_0\in \NSE{\Nats}$.        

In this paper, the nonstandard model is as saturated as it needs to be.
Let $(X,\topology)$ be a topological space. The monad of a point $x\in X$ is the set $\bigcap_{x\in U\in \topology}\NSE{U}$.
An element $x\in \NSE{X}$ is \emph{near-standard} if it is in the monad of some $y\in X$. We say $y$ is the standard part of $x$ and write $y=\ST(x)$. We use $\NS{\NSE{X}}$ to denote the collection of near-standard elements of $\NSE{X}$ and we say $\NS{\NSE{X}}$ is the \emph{near-standard part} of $\NSE{X}$. The standard part map $\ST$ is a function from $\NS{\NSE{X}}$ to $X$. For a metric space $X$, two elements $x,y\in \NSE{X}$ are \emph{infinitely close} if $\NSE{d}(x,y)\approx 0$. For two elements $a,b\in \NSE{\Reals}$, we write $a\lessapprox b$ to mean $a<b$ or $a\approx b$.

Let $X$ be a topological space. The $\sigma$-algebra on $X$ is always taken to be the Borel $\sigma$-algebra and is denoted by $\BorelSets X$. We use  $\PM{X}$ to denote the collection of all countably additive probability measures on $(X,\BorelSets X)$ and let $\FM{X}$ denote the collection of all \emph{charges}, that is, finitely-additive probability measures, on $(X,\BorelSets X)$.
An internal probability measure $\mu$ on $(\NSE{X},\NSE{\BorelSets X})$ is an element of $\NSE{\FM{X}}$. Namely, an internal probability measure $\mu$ on $(\NSE{X},\NSE{\BorelSets X})$ is an internal function from $\NSE{\BorelSets X}\to \NSE{[0,1]}$ such that
\begin{enumerate}
\item $\mu(\emptyset)=0$;
\item $\mu(\NSE{X})=1$; and
\item For $A,B\in \NSE{\BorelSets X}$ with $A\cap B=\emptyset$, $\mu(A\cup B)=\mu(A)+\mu(B)$. 
\end{enumerate}
We use $(\NSE{X},\Loeb{\NSE{\BorelSets X}}, \Loeb{\mu})$ to denote the Loeb extension of the internal probability space $(\NSE{X},\NSE{\BorelSets X},\mu)$.

\section{Wasserstein Metric}\label{starwass}
Let $P$ be a charge on $(X,\BorelSets X)$ and let $\nu$ be an internal probability measure on $(\NSE{X},\NSE{\BorelSets X})$.
Suppose $\NSE{P}(\NSE{A})\approx \nu(\NSE{A})$ for all $A\in \BorelSets X$. We investigate the relation between $\NSE{P}$ and $\nu$.
Note that if $\nu=\NSE{P_1}$ for some charge $P_1$ on $(X,\BorelSets X)$, by the transfer principle, it is easy to see that $P_1=P$.
So we are interested in the case where $\nu$ is not the nonstandard extension of any standard charge.

Integration with respect to charges is similar to integration with respect to countably additive probability measures except that we only have finite additivity. However, we need to be careful about what functions are integrable. We quote the following result regarding integrability of charges.

\begin{lemma}[{\citep[][Corollary.~4.5.9]{charge}}]\label{bdmsrb}
Let $P$ be a charge on $(X,\BorelSets X)$. Let $f$ be a bounded real-valued measurable function on $X$. Then $f$ is integrable with respect to $P$.
\end{lemma}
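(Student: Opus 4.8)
The plan is to reduce integrability of $f$ to that of simple functions and then pass to a uniform limit, which is the only kind of limit a merely finitely-additive $P$ can be trusted to respect. For a simple function $s=\sum_{i=1}^{n}a_i\Ind_{A_i}$ with pairwise disjoint $A_i\in\BorelSets X$ one sets $\int s\dee P=\sum_{i=1}^{n}a_iP(A_i)$; this is well defined (independent of the representation) using only finite additivity of $P$. A bounded measurable $f$ will then be declared integrable once I produce simple functions converging to it in such a way that their integrals converge to a single value independent of the chosen sequence, which is how integrability against a charge is formulated in the cited reference.

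First I would construct approximating simple functions by discretizing the range. Fixing $M$ with $\abs{f}\leq M$, for each $n$ I partition $[-M,M]$ into finitely many intervals of length at most $1/n$ and let $s_n(x)$ be the left endpoint of the interval containing $f(x)$. Each $s_n$ is simple because measurability of $f$ forces every preimage $f^{-1}(I)$ to lie in $\BorelSets X$, and by construction $\norm{s_n-f}_{\infty}\leq 1/n$, so $s_n\to f$ uniformly. The key estimate, available because $P(X)=1$ and $P$ is finitely additive, is $\abs{\int s_n\dee P-\int s_m\dee P}\leq\norm{s_n-s_m}_{\infty}\leq 1/n+1/m$. Hence $\{\int s_n\dee P\}_{n\in\Nats}$ is Cauchy in $\Reals$; I define $\int f\dee P$ to be its limit, and the same uniform bound shows that any other uniformly-approximating sequence yields the same value, so the integral is well defined and $f$ is integrable.

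The main obstacle is conceptual rather than computational: because $P$ is only finitely additive, the monotone and dominated convergence theorems are unavailable, so the usual route of approximating $f$ by an increasing sequence of simple functions and invoking a limit theorem is closed off. Everything therefore rests on converting the approximation into a \emph{uniform} one, and this is exactly where boundedness of $f$ is indispensable: it is what lets the range be covered by finitely many short intervals and makes $\norm{s_n-f}_{\infty}$ small. For unbounded $f$ this mechanism fails, which is why the hypothesis that $f$ be bounded cannot simply be dropped. A final minor point is to verify that the uniform convergence used here matches the precise (hazy, or convergence-in-charge) notion underlying the reference's definition of integrability, but uniform convergence trivially implies it.
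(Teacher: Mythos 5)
The paper offers no proof of this lemma at all: it is quoted verbatim from the cited reference (Rao--Rao, \emph{Theory of Charges}, Corollary~4.5.9), so the only comparison available is with the standard argument there, and yours matches it. Your proof is correct and is essentially that standard argument: discretize the range of the bounded measurable $f$ into intervals of length at most $1/n$ to obtain simple functions $s_n$ with $\norm{s_n-f}_{\infty}\leq 1/n$, use $P(X)=1$ and finite additivity to get $\abs{\int s_n\dee P-\int s_m\dee P}\leq \norm{s_n-s_m}_{\infty}$, and pass to the limit, noting (as you do) that uniform convergence trivially implies the hazy convergence and $L_1$-Cauchy property required by the reference's definition of integrability with respect to a charge.
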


The Wasserstein distance is usually defined for countably additive probability measures. In this paper, we extend the definition of Wasserstein distance to charges.

\begin{definition}\label{defnwas}
Let $\mu$ and $\nu$ be two charges on some bounded metric space $(Y,d)$ with Borel $\sigma$-algebra $\BorelSets Y$. The Wasserstein distance between $\mu$ and $\nu$ is given by
\[
W(\mu,\nu)=\sup\{|\ \int f\dee \mu-\int f \dee \nu\ |: f\in \Lip{Y}\}
\]
where $\Lip{Y}$ denote the set of $1$-Lipschitz functions from $Y$ to $\Reals$, i.e those functions $f$ such that $|f(x)-f(y)|\leq d(x,y)$ for all $x,y\in Y$.
\end{definition}
As $Y$ is bounded, every $f\in \Lip{Y}$ is bounded measurable. Thus, by \cref{bdmsrb}, the Wasserstein metric on charges is well-defined. The following two lemmas provide a sufficient criterion to establish that the Wasserstein distance between two given internal probability measures is infinitesimal.

\begin{lemma}\label{stareq}
Let $(X,d)$ be a bounded $\sigma$-compact metric space.
Let $P$ be a countably additive probability measure on $(X,\BorelSets X)$ and let $\nu$ be an internal probability measure on $(\NSE{X},\NSE{\BorelSets X})$.
Suppose for every $n\in \Nats$, there is a countable partition $\{V^{n}_i:i\in \Nats\}$ of $X$
consisting of non-empty Borel sets with diameters no greater than $\frac{1}{n}$ such that $\NSE{P}(\NSE{V^{n}_{i}})\approx \nu(\NSE{V^{n}_{i}})$.
Then $\NSE{W}(\nu,\NSE{P})\approx 0$.
\end{lemma}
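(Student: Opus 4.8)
The plan is to show that the (internal, nonnegative) hyperreal $\NSE{W}(\nu,\NSE{P})$ is smaller than every standard $\epsilon>0$, which forces it to be infinitesimal. By transfer of \cref{defnwas}, $\NSE{W}(\nu,\NSE{P})=\sup\{\,|\int f\dee\nu-\int f\dee\NSE{P}|:f\in\NSE{\Lip X}\,\}$ (internal integrals, internal $1$-Lipschitz $f$), so it suffices, for each standard $\epsilon>0$, to bound $|\int f\dee\nu-\int f\dee\NSE{P}|$ by something $<\epsilon$ that does \emph{not} depend on $f$. The hard part is exactly this uniformity: a supremum of infinitesimals need not be infinitesimal, so each estimate below must be controlled independently of the Lipschitz function. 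Integrability of $f$ against the internal charges is granted by transfer of \cref{bdmsrb}, since $X$ is bounded.

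First I would normalize. Since both measures are probability measures, replacing $f$ by $f-f(x_0)$ for a fixed standard $x_0\in X$ does not change $\int f\dee\nu-\int f\dee\NSE{P}$; as $X$ is bounded with diameter $D<\infty$, the $1$-Lipschitz property and transfer give $|f|\le D$ on $\NSE{X}$ for the normalized $f$. Next I would discretize using the hypothesised partition. Fix a standard $n$ with $2/n<\epsilon/3$. Because $P$ is countably additive and $\{V^n_i\}_i$ partitions $X$, we have $\sum_i P(V^n_i)=1$, so a standard $N$ can be chosen with $P(W_0)<\epsilon'$, where $W_0\defas\bigcup_{i>N}V^n_i$ and $\epsilon'$ is picked (after $n$) so small that $2D\epsilon'<\epsilon/3$. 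Now $W_0,V^n_1,\dots,V^n_N$ is a finite Borel partition of $X$; by transfer their stars partition $\NSE{X}$, and finite additivity of $\nu$ and $\NSE{P}$ splits each integral over this finite partition. Picking standard $x_i\in V^n_i$ and setting $c_i=f(x_i)$, the Lipschitz bound together with $\mathrm{diam}(\NSE{V^n_i})\le 1/n$ (transfer) gives $|\int_{\NSE{V^n_i}}f\dee\mu-c_i\,\mu(\NSE{V^n_i})|\le\frac1n\mu(\NSE{V^n_i})$ for $\mu\in\{\nu,\NSE{P}\}$.

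Then I would assemble the estimate cell by cell. On the retained cells the differences produce $\sum_{i\le N}c_i(\nu(\NSE{V^n_i})-\NSE{P}(\NSE{V^n_i}))$ together with two discretization errors, each at most $\frac1n$ after summing. Because $|c_i|\le D$ and $\nu(\NSE{V^n_i})\approx\NSE{P}(\NSE{V^n_i})$ for the finitely many standard $i\le N$, the first sum is at most $D\sum_{i\le N}|\nu(\NSE{V^n_i})-\NSE{P}(\NSE{V^n_i})|$, a finite sum of infinitesimals, hence infinitesimal and, crucially, bounded independently of $f$. The $\nu$-mass of the tail is reached only indirectly, via finite additivity: $\nu(\NSE{W_0})=1-\sum_{i\le N}\nu(\NSE{V^n_i})\approx 1-\sum_{i\le N}\NSE{P}(\NSE{V^n_i})=P(W_0)$, so $\nu(\NSE{W_0})\lessapprox\epsilon'$ and the tail difference is $\le D\nu(\NSE{W_0})+D\NSE{P}(\NSE{W_0})\lessapprox 2D\epsilon'$. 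Collecting everything, $|\int f\dee\nu-\int f\dee\NSE{P}|\le\frac2n+2D\epsilon'+(\text{infinitesimal})<\epsilon$, with a bound free of $f$.

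Taking the supremum over $f\in\NSE{\Lip X}$ then yields $\NSE{W}(\nu,\NSE{P})<\epsilon$ for every standard $\epsilon>0$, i.e. $\NSE{W}(\nu,\NSE{P})\approx 0$. The two delicate points I expect to require the most care are keeping every bound uniform in $f$ (handled by the normalization $|c_i|\le D$) and controlling the tail cell $W_0$ for $\nu$, which we never touch directly but only through finite additivity plus the cell-wise approximations. The truncation of the countable partition to a finite one is what makes both feasible, and it is exactly here that the countable additivity of $P$ enters.
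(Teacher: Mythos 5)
Your proof is correct, but it takes a genuinely different route from the paper's. The paper keeps the whole countable partition and makes it hyperfinite: it uses saturation to extend $\{\NSE{V^{n}_{i}}\}_{i\in\Nats}$ to an internal sequence $g_{0}:\NSE{\Nats}\to\NSE{\BorelSets X}$ whose cells all have diameter at most $\frac{1}{n}$ and satisfy $\nu(g_{0}(k))\approx\NSE{P}(g_{0}(k))$, applies overspill to obtain a hyperfinite index $K$, and then compares hyperfinite Riemann sums $\sum_{i\le K+1}F(x_{i})\,\mu(g_{1}(i))$ for $\mu\in\{\nu,\NSE{P}\}$; the resulting hyperfinite sum of weighted differences is shown to be infinitesimal in a separate claim, where countable additivity of $P$ is invoked to truncate at a standard $m$ and bound the tail mass. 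You instead truncate at a standard finite $N$ from the start (again via countable additivity of $P$), lump the tail into a single cell $W_{0}$, and recover $\nu(\NSE{W_{0}})\approx P(W_{0})<\epsilon'$ by finite additivity together with the cell-wise hypothesis on the retained cells --- exactly the mechanism the paper buries inside its claim, but applied once and upfront. This makes your argument more elementary: no saturation, no overspill, only transfer and finite estimates; and your normalization $f\mapsto f-f(x_{0})$ plays the same role as the paper's coefficients $k_{i}=F(x_{i})-F(x_{1})$, both exploiting that constants integrate identically against probability measures. One remark: the uniformity in $f$ that you treat as the crux is not strictly necessary. By transfer, $\NSE{W}(\nu,\NSE{P})$ is the least upper bound of an internal set of hyperreals, and every standard $\epsilon>0$ is an upper bound of any set of infinitesimals; hence pointwise infinitesimality of $|\int f\dee\nu-\int f\dee\NSE{P}|$ over internal $f$ already forces the supremum to be infinitesimal, which is how the paper concludes from its pointwise estimate. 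Your uniform bound is stronger and does no harm; it simply costs some extra bookkeeping that the least-upper-bound observation would have spared you.
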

\begin{proof}
Fix $n\in \Nats$. Let $\{V^{n}_i:i\in \Nats\}$ be a countable partition of $X$ consisting of non-empty Borel sets
with diameters no greater than $\frac{1}{n}$ such that $\NSE{P}(\NSE{V^{n}_{i}})\approx \nu(\NSE{V^{n}_{i}})$.
For every $i\in \Nats$, let $B_{i}$ denote the set of all internal functions $g: \NSE{\Nats}\mapsto \NSE{\BorelSets X}$ such that
\begin{enumerate}
\item $g(i)=\NSE{V^{n}_{i}}$;
\item $(\forall k\in \NSE{\Nats})(\sup\{\NSE{d}(x,y)|x,y\in g(k)\}\leq \frac{1}{n})$;
\item $(\forall k\in \NSE{\Nats})(|\nu(g(k))-\NSE{P}(g(k))|\leq \frac{1}{i})$.
\end{enumerate}
Let $\cB$ be the collection of all $B_{i}$. Then $\cB$ has countable cardinality and the finite intersection property.
By the saturation principle, there is an internal function $g_0$ which is an element of $B_i$ for all $i\in \Nats$.
Note that $g_0(i)=\NSE{V^{n}_i}$ for all $i\in \Nats$ and $\nu(g_0(k))\approx \NSE{P}(g_0(k))$ for all $k\in \NSE{\Nats}$.
By overspill, there is a $K\in \NSE{\Nats}\setminus \Nats$ such that $g_{0}(i)\neq \emptyset$ for all $i\leq K+1$.
For any $i\leq K$, pick $x_i\in g_{0}(i)$.
Pick some $F\in \NSE{\Lip{X}}$.
As $F\in \NSE{\Lip{X}}$ and the diameter of every $g_{0}(i)$ is no greater than $\frac{1}{n}$,
we have
\[
(\forall i\leq K)(\forall x\in g_{0}(i))(|F(x_i)-F(x)|\leq\frac{1}{n}).
\]
Finally, we pick some $x_{K+1}\in \NSE{X}\setminus \bigcup_{i\leq K}g_{0}(i)$.
As $X$ is bounded and $F\in \NSE{\Lip{X}}$, we know that the function $|F(x)-F(x_{K+1})|$ is bounded by a standard real number.
Define $g_1: \{1,2,\dotsc,K+1\}\to \NSE{\BorelSets X}$ to be the internal function such that $g_{1}(i)=g_{0}(i)$ for all $i\leq K$ and $g_{1}(K+1)=\NSE{X}\setminus \bigcup_{i\leq K}g_{0}(i)$.
Hence we have
\[
&|\ \int_{\NSE{X}} F(x)\ \nu(\dee x)-\sum_{i\leq K+1}\int_{g_{1}(i)} F(x_i)\ \nu(\dee x)\ |\\
&\leq \sum_{i\leq K}\int_{g_{1}(i)}|F(x)-F(x_i)|\ \nu(\dee x)+\int_{g_{1}(K+1)}|F(x)-F(x_{K+1})|\ \nu(\dee x)\\
&\lessapprox \frac{1}{n}.
\]
Similarly, we have
\[
|\ \int_{\NSE{X}} F(x)\ \NSE{P}(\dee x)-\sum_{i\leq K+1}\int_{g_{1}(i)} F(x_i)\ \NSE{P}(\dee x)\ |\lessapprox \frac{1}{n}.
\]
We now compare $\sum_{i\leq K+1}\int_{g_{1}(i)} F(x_i)\ \nu(\dee x)$ and $\sum_{i\leq K+1}\int_{g_{1}(i)} F(x_i)\ \NSE{P}(\dee x)$.
Note that
\[
&|\ \sum_{i\leq K+1}\int_{g_{1}(i)} F(x_i)\ \nu(\dee x)-\sum_{i\leq K+1}\int_{g_{1}(i)} F(x_i)\ \NSE{P}(\dee x)\ |\\
&=|\ \sum_{i\leq K+1}F(x_i)(\nu(g_{1}(i))-\NSE{P}(g_{1}(i)))\ |\\
&=|\ \sum_{i\leq K+1}(F(x_1)+k_i)(\nu(g_{1}(i))-\NSE{P}(g_{1}(i)))\ |\\
&=|\ \sum_{i\leq K+1}F(x_1)(\nu(g_{1}(i))-\NSE{P}(g_{1}(i)))+\sum_{i\leq K+1}k_i(\nu(g_{1}(i))-\NSE{P}(g_{1}(i)))\ |\\
&=|\ \sum_{i\leq K+1}k_i(\nu(g_{1}(i))-\NSE{P}(g_{1}(i)))\ |.
\]
where $k_i$ is the difference between $F(x_i)$ and $F(x_1)$.

As $X$ is bounded and $F\in \NSE{\Lip{X}}$, we know that $k_i\in \NS{\NSE{\Reals}}$ for all $i\leq K+1$.
Suppose $|\ \sum_{i\leq K+1}k_i(\nu(g_{1}(i))-\NSE{P}(g_{1}(i)))\ |\approx 0$ then we have
\[
|\ \int_{\NSE{X}} F(x)\ \nu(\dee x)-\int_{\NSE{X}} F(x)\ \NSE{P}(\dee x)\ |\lessapprox \frac{2}{n}.
\]
As $n$ is arbitrary, we know that $\int_{\NSE{X}} F(x)\ \nu(\dee x) \approx \int_{\NSE{X}} F(x)\ \NSE{P}(\dee x)$ hence we have $\NSE{W}(\nu,\NSE{P})\approx 0$ by \cref{defnwas}.
Thus, in order to finish the proof, it is sufficient to prove $|\ \sum_{i\leq K+1}k_i(\nu(g_{1}(i))-\NSE{P}(g_{1}(i)))\ |\approx 0$.
\begin{claim} $\sum_{i\leq K+1}k_i(\nu(g_{1}(i))-\NSE{P}(g_{1}(i)))\approx 0$. \end{claim}
\begin{proof}
Pick some $k\in \Nats$. As $P$ is countably additive, there exists $m\in \Nats$ such that $\sum_{i\leq m}\nu(g_{1}(i))\geq 1-\frac{1}{k}$ and $\sum_{i\leq m}\NSE{P}(g_{1}(i))\geq 1-\frac{1}{k}$.
Thus we have
\[
&\sum_{i\leq K+1}k_i(\nu(g_{1}(i))-\NSE{P}(g_{1}(i)))\\
&=\sum_{i\leq m}k_i(\nu(g_{1}(i))-\NSE{P}(g_{1}(i)))+\sum_{m<i\leq K+1}k_i(\nu(g_{1}(i))-\NSE{P}(g_{1}(i)))\\
&\approx \sum_{m<i\leq K+1}k_i(\nu(g_{1}(i))-\NSE{P}(g_{1}(i)))\leq \frac{\max\{k_i:i\leq K+1\}}{k}.
\]
As $\max\{k_i:i\leq K+1\}$ is near-standard and $k$ is arbitrary, we have the desired result.
\end{proof}
Hence we have completed the proof. 
\end{proof}

By using a similar argument as in \cref{stareq}, we obtain the following result for charges.

\begin{lemma}\label{finstareq}
Let $(X,d)$ be a bounded $\sigma$-compact metric space.
Let $P$ be a charge on $(X,\BorelSets X)$ and let $\nu$ be an internal probability measure on $(\NSE{X},\NSE{\BorelSets X})$.
Suppose for every $n\in \Nats$, there is a finite partition $\{V^{n}_i:i\leq N\}$ of $X$
consisting of non-empty Borel sets with diameters no greater than $\frac{1}{n}$ such that $\NSE{P}(\NSE{V^{n}_{i}})\approx \nu(\NSE{V^{n}_{i}})$.
Then $\NSE{W}(\nu,\NSE{P})\approx 0$.
\end{lemma}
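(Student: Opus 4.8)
The plan is to mirror the proof of \cref{stareq}, but to exploit the finiteness of the partition so as to bypass both the saturation/overspill construction and the appeal to countable additivity that were needed there. By \cref{defnwas} and transfer, $\NSE{W}(\nu,\NSE{P})=\sup\{\,|\int F\dee\nu-\int F\dee\NSE{P}| : F\in\NSE{\Lip{X}}\,\}$, the supremum being understood internally, so it suffices to produce, for each standard $n\in\Nats$, a single bound on $|\int F\dee\nu-\int F\dee\NSE{P}|$ of the form $2/n$ plus an infinitesimal error, \emph{valid for every internal $F\in\NSE{\Lip{X}}$ simultaneously}; as $n$ ranges over $\Nats$ this forces the supremum to be infinitesimal.

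First I would fix $n$ and take the finite partition $\{V^{n}_{i}:i\leq N\}$ supplied by the hypothesis. Since $N$ is a standard natural number, transfer gives that $\{\NSE{V^{n}_{i}}:i\leq N\}$ is an internal partition of $\NSE{X}$ into non-empty sets of diameter at most $1/n$, with $\sum_{i\leq N}\nu(\NSE{V^{n}_{i}})=\nu(\NSE{X})=1$ and likewise $\sum_{i\leq N}\NSE{P}(\NSE{V^{n}_{i}})=1$. For each $i\leq N$ I pick a representative $x_i\in\NSE{V^{n}_{i}}$ (non-empty by transfer). Given any $F\in\NSE{\Lip{X}}$, the $1$-Lipschitz property together with the diameter bound yields $|F(x)-F(x_i)|\leq 1/n$ for all $x\in\NSE{V^{n}_{i}}$, so approximating $F$ by the internal simple function $\sum_{i\leq N}F(x_i)\mathbf{1}_{\NSE{V^{n}_{i}}}$ costs at most $1/n$ against each measure:
\[
\Bigl|\int_{\NSE{X}}F\dee\nu-\sum_{i\leq N}F(x_i)\,\nu(\NSE{V^{n}_{i}})\Bigr|\leq\tfrac1n,\qquad \Bigl|\int_{\NSE{X}}F\dee\NSE{P}-\sum_{i\leq N}F(x_i)\,\NSE{P}(\NSE{V^{n}_{i}})\Bigr|\leq\tfrac1n.
\]

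It then remains to compare the two simple-function values, whose difference is $\sum_{i\leq N}F(x_i)\bigl(\nu(\NSE{V^{n}_{i}})-\NSE{P}(\NSE{V^{n}_{i}})\bigr)$. Writing $F(x_i)=F(x_1)+k_i$ with $k_i=F(x_i)-F(x_1)$, the $F(x_1)$-term multiplies $\sum_{i\leq N}(\nu(\NSE{V^{n}_{i}})-\NSE{P}(\NSE{V^{n}_{i}}))=1-1=0$ and hence drops out, leaving $\sum_{i\leq N}k_i\bigl(\nu(\NSE{V^{n}_{i}})-\NSE{P}(\NSE{V^{n}_{i}})\bigr)$. This is the one place where the argument genuinely differs from \cref{stareq}, and it is also where it becomes easier: the index set $\{1,\dots,N\}$ is standard-finite, each $|k_i|$ is bounded by $\mathrm{diam}(X)$ (a standard real, since $X$ is bounded), and each $\nu(\NSE{V^{n}_{i}})-\NSE{P}(\NSE{V^{n}_{i}})$ is infinitesimal by hypothesis; thus the sum is bounded in absolute value by $\mathrm{diam}(X)\sum_{i\leq N}|\nu(\NSE{V^{n}_{i}})-\NSE{P}(\NSE{V^{n}_{i}})|$, a finite sum of infinitesimals, hence infinitesimal. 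Crucially, this last bound does not involve $F$.

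Combining the three estimates gives $|\int F\dee\nu-\int F\dee\NSE{P}|\lessapprox 2/n$ with an infinitesimal slack that is uniform in $F$, so $\NSE{W}(\nu,\NSE{P})\leq 2/n+(\text{infinitesimal})$ for every standard $n$, whence $\NSE{W}(\nu,\NSE{P})\approx 0$. I do not anticipate a serious obstacle here: the countable additivity of $P$ and the saturation/overspill apparatus of \cref{stareq} were forced only by the countable partition and its hyperfinitely many residual cells, and both evaporate once the partition is finite. The only point requiring genuine care is the uniformity in $F$ of the infinitesimal error, which must be verified precisely so that it survives the passage to the supremum in \cref{defnwas}.
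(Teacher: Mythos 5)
Your proposal is correct and takes essentially the same route as the paper: the paper proves \cref{finstareq} only by remarking that it follows ``by a similar argument as in \cref{stareq}'', and your write-up is exactly that adaptation, correctly observing that a partition indexed by a standard finite $N$ eliminates the saturation/overspill construction and the countable-additivity step, since the key error term becomes a standard-finite sum of near-standard coefficients times infinitesimals. The uniformity in $F$ that you flag is indeed the only delicate point, and your constant-subtraction trick ($F(x_i)=F(x_1)+k_i$) handles it the same way the paper does.
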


The following example shows that \cref{stareq} and \cref{finstareq} are sharp. 

\begin{example}
Let $X=(0,1)$ endowed with the standard metric and Borel $\sigma$-algebra $\BorelSets X$. Let $\nu$ be an internal probability measure concentrates on some infinitesimal $\epsilon$. Let $P$ be a charge with $P((0,1-\frac{1}{n}])=0$ for all $n\in \Nats$ and $P((0,1))=1$. 
Pick $n\in \Nats$. We can pick $m\geq n$ such that $[\frac{1}{m},1-\frac{1}{m}]$ is a non-empty subset of $(0,1)$. We can partition $[\frac{1}{m},1-\frac{1}{m}]$ into $k$ Borel sets with diameter no greater than $\frac{1}{n}$ for some $k\in \Nats$. We denote these sets by $V^{n}_{i}$ for $i\leq k$. Let $V^{n}_{k+j}=[\frac{1}{2^{j}m},\frac{1}{2^{j-1}m})\cup (1-\frac{1}{2^{j-1}m},1-\frac{1}{2^{j}m}]$. Thus, $\{V^{n}_{i}: i\in \Nats\}$ forms a countable partition of $(0,1)$ consisting of Borel sets with diameter no greater than $\frac{1}{n}$. Note that $\nu(\NSE{V^{n}_{i}})=\NSE{P}(\NSE{V^{n}_{i}})=0$.

On the other hand, let $f$ be the identity function from $(0,1)\to (0,1)$. Then we have $\int \NSE{f}(x) \nu(\dee x)\approx 0$ while $\int \NSE{f}(x)\NSE{P}(\dee x)\approx 1$. Hence, we have $\NSE{W}(\nu,\NSE{P})\gtrapprox 1$. 
\end{example}

We conclude this section with the following theorem, which is a direct consequence of \cref{finstareq}. 

\begin{theorem}\label{starall}
Let $(X,d)$ be a bounded $\sigma$-compact metric space.
Let $P$ be a charge on $(X,\BorelSets X)$ and let $\nu$ be an internal probability measure on $(\NSE{X},\NSE{\BorelSets X})$.
Suppose $\nu(\NSE{B})\approx \NSE{P}(\NSE{B})$ for all $B\in \BorelSets X$.
Then $\NSE{W}(\nu,\NSE{P})\approx 0$.
\end{theorem}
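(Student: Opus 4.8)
The plan is to verify the hypothesis of \cref{finstareq} and then invoke that lemma verbatim; this is where the phrase ``direct consequence'' gets its force. The key observation is that the blanket comparison $\nu(\NSE{B}) \approx \NSE{P}(\NSE{B})$ assumed here is far stronger than what \cref{finstareq} actually needs: the latter asks only that the two measures agree infinitesimally on the cells of \emph{one} finite small-diameter partition for each scale $\frac{1}{n}$. So the entire matching requirement of \cref{finstareq} will be met for free, for \emph{any} finite Borel partition of $X$ into cells of diameter at most $\frac{1}{n}$, simply because each such cell is a standard element of $\BorelSets{X}$ and the hypothesis applies to every $B \in \BorelSets{X}$.

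Consequently the only thing left to produce is, for each $n \in \Nats$, a single finite partition of $X$ into non-empty Borel sets of diameter at most $\frac{1}{n}$. First I would cover $X$ by finitely many open balls $B(x_1,\tfrac{1}{2n}),\dotsc,B(x_m,\tfrac{1}{2n})$ of radius $\tfrac{1}{2n}$, then disjointify them in the usual way by setting $V^{n}_{i}=B(x_i,\tfrac{1}{2n})\setminus\bigcup_{j<i}B(x_j,\tfrac{1}{2n})$ and discarding any empty cells. The resulting family is a finite Borel partition of $X$, and each cell has diameter at most $\frac{1}{n}$ since it sits inside a ball of radius $\frac{1}{2n}$. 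By the observation of the previous paragraph, $\NSE{P}(\NSE{V^{n}_{i}})\approx\nu(\NSE{V^{n}_{i}})$ holds automatically for every $i$, so the hypotheses of \cref{finstareq} are satisfied and $\NSE{W}(\nu,\NSE{P})\approx 0$ follows immediately.

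The one genuinely load-bearing step is the construction of a \emph{finite} cover of $X$ by balls of radius $\frac{1}{2n}$ — that is, the total boundedness of $X$. This is the sole place where the geometric assumption on the space is used, and it cannot be dropped: without the existence of finite small-diameter partitions, \cref{finstareq} has nothing to apply to. All of the measure-theoretic difficulty, by contrast, is already packaged inside \cref{finstareq} (and, beneath it, inside \cref{stareq}); given the pointwise comparison $\nu(\NSE{B})\approx\NSE{P}(\NSE{B})$ that we are handed, no further estimate on $\nu$ or $P$ is required here. I therefore expect essentially no friction in the argument beyond making the covering/disjointification step explicit and confirming that the discarded-empty-cell bookkeeping leaves a genuine partition of $X$.
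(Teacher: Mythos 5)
Your reduction to \cref{finstareq} is exactly the derivation the paper intends: the blanket hypothesis $\nu(\NSE{B})\approx\NSE{P}(\NSE{B})$ makes the matching condition automatic on any finite partition into standard Borel sets, so everything hinges on producing, for each $n$, a finite partition of $X$ into non-empty Borel sets of diameter at most $\frac{1}{n}$. The gap is in that last step: the existence of such partitions (equivalently, of a finite cover by $\frac{1}{2n}$-balls) is \emph{precisely} total boundedness, and total boundedness is neither among the hypotheses nor implied by them. ``Bounded and $\sigma$-compact'' does not do it: take $X=\Nats$ with the discrete metric $d(x,y)=1$ for $x\neq y$; this space is bounded, $\sigma$-compact (a countable union of singletons), even complete and separable, yet for $n\geq 2$ any set of diameter at most $\frac{1}{n}$ is a singleton, so no finite small-diameter partition exists and \cref{finstareq} has nothing to apply to.

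Moreover, this is not a repairable oversight under the stated hypotheses, because the statement itself fails on that space. Let $K\in\NSE{\Nats}\setminus\Nats$, let $\nu$ be the internal point mass at $K$, and let $P=\ipd{\nu}$, so that $P(B)=1$ if and only if $K\in\NSE{B}$ (a non-principal ultrafilter charge). Then $\nu(\NSE{B})=\NSE{P}(\NSE{B})$ for every $B\in\BorelSets{X}$, so the hypothesis of \cref{starall} holds. But $P(\{m\in\Nats: m\geq n\})=1$ for every $n\in\Nats$, so by transfer $\NSE{P}(\{m\in\NSE{\Nats}: m\geq K+1\})=1$, hence $\NSE{P}(\{m: m\leq K\})=0$. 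The internal function $F=\Ind_{\{m\leq K\}}$ is $1$-Lipschitz for the discrete metric, and $\int F\dee\nu=F(K)=1$ while $\int F\dee\NSE{P}=\NSE{P}(\{m\leq K\})=0$, giving $\NSE{W}(\nu,\NSE{P})\geq 1$. So your argument (and, in fairness, the paper's own one-line appeal to \cref{finstareq}, whose partition hypothesis is equivalent to total boundedness) proves the theorem only when $X$ is totally bounded; under the literal hypothesis ``bounded $\sigma$-compact'' the claim is false, and the step ``cover $X$ by finitely many balls of radius $\frac{1}{2n}$'' is exactly where an unstated stronger assumption enters. You should state total boundedness as an added hypothesis rather than assert that the finite cover exists.
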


\section{Construction of Standard Charges}\label{stcharge}

One of the strength of nonstandard analysis is its ability to construct exotic standard objects.
In this section, we discuss two procedures on constructing standard measurs/charges on metric spaces.
We also establish some connections between standard objects obtained from these two different approaches.

We begin with an internal probability measure and then use the transfer principle to push it down to get a charge.

\begin{definition} \label{intpushdown}
Let $(X,\BorelSets X)$ be a measurable space and let $\nu$ be an internal probability measure on $(\NSE{X},\NSE{\BorelSets X})$. Its \emph{internal push-down} is a function $\ipd{\nu}: \BorelSets X\mapsto [0,1]$ defined by $\ipd{\nu}(A)=\ST({\nu(\NSE{A})})$.
\end{definition}

The following lemma follows immediately from \cref{intpushdown}.

\begin{lemma} \label{pushdownlemma}
Let $\nu$ be an internal probability measure on $(\NSE{X},\NSE{\BorelSets X})$. Then its internal push-down measure $\ipd{\nu}$ is a charge on $(X,\BorelSets X)$.
\end{lemma}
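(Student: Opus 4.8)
The plan is to verify directly that $\ipd{\nu}$ satisfies the three defining properties of a charge, after first confirming that it is well-defined. Well-definedness is the only point that requires a moment's thought: since $\nu$ is an internal probability measure, by definition it maps $\NSE{\BorelSets X}$ into $\NSE{[0,1]}$, and for any $A \in \BorelSets X$ the transfer principle gives $\NSE{A} \in \NSE{\BorelSets X}$, so $\nu(\NSE{A}) \in \NSE{[0,1]}$. Because $[0,1]$ is compact, every element of $\NSE{[0,1]}$ is near-standard with standard part lying in $[0,1]$; hence $\ST(\nu(\NSE{A}))$ exists and $\ipd{\nu}(A) \in [0,1]$, so $\ipd{\nu}$ is a genuine function $\BorelSets X \to [0,1]$.

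For the normalization axioms, I would note that $\NSE{\emptyset} = \emptyset$ and $\NSE{X} \in \NSE{\BorelSets X}$ satisfy $\nu(\emptyset) = 0$ and $\nu(\NSE{X}) = 1$ by the first two defining properties of an internal probability measure, whence $\ipd{\nu}(\emptyset) = \ST(0) = 0$ and $\ipd{\nu}(X) = \ST(1) = 1$.

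The heart of the argument is finite additivity. Given disjoint $A, B \in \BorelSets X$, I would first use the transfer principle to move the set-theoretic relations into the nonstandard universe: from $A \cap B = \emptyset$ we obtain $\NSE{A} \cap \NSE{B} = \NSE{(A \cap B)} = \emptyset$, and likewise $\NSE{(A \cup B)} = \NSE{A} \cup \NSE{B}$. The internal finite additivity of $\nu$ (its third defining property, applied to the disjoint internal sets $\NSE{A}$ and $\NSE{B}$) then yields $\nu(\NSE{(A \cup B)}) = \nu(\NSE{A}) + \nu(\NSE{B})$. Finally, since both summands are near-standard and the standard part map is additive on the near-standard elements of $\NSE{\Reals}$, I distribute $\ST$ across the sum to conclude $\ipd{\nu}(A \cup B) = \ipd{\nu}(A) + \ipd{\nu}(B)$.

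I do not expect any substantial obstacle here; the lemma is a direct unwinding of \cref{intpushdown}. The only steps that are not purely mechanical are the two invocations of transfer guaranteeing $\NSE{A} \cap \NSE{B} = \emptyset$ and $\NSE{(A \cup B)} = \NSE{A} \cup \NSE{B}$, together with the observation that $\ST$ respects finite sums of near-standard quantities — both standard facts recorded in the preliminaries.
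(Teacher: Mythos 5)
Your proof is correct, and it is essentially the paper's own argument: the paper offers no written proof, stating only that the lemma ``follows immediately from \cref{intpushdown}'', and what you have written is precisely that immediate verification spelled out (well-definedness via near-standardness of elements of $\NSE{[0,1]}$, normalization, and finite additivity via transfer plus additivity of $\ST$ on near-standard hyperreals). No gaps; all three invocations of transfer and the use of the standard part map are sound.
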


In most cases, $\ipd{\nu}$ is not a countably additive probability measure. Moreover, the nonstandard extension of $\ipd{\nu}$ is usually not the same as $\nu$.

%\begin{example}
%Let $\nu$ be an internal probability measure on $(\NSE{[0,1]},\NSE{\BorelSets {[0,1]}})$ concentrating on some positive infinitesimal $\epsilon$.
%Then $\ipd{\nu}$ is a charge on $([0,1],\BorelSets {[0,1]})$ with $\ipd{\nu}((0,\frac{1}{n}))=1$ for all $n\in \Nats$.
%Pick $K\in \NSE{\Nats}\setminus\Nats$ such that $\frac{1}{K}<\epsilon$. Note that $\nu((0,\frac{1}{K}))=0$ but $\NSE{\ipd{\nu}}((0,\frac{1}{K}))=1$ by the transfer principle.
%\end{example}

The following theorem is a direct consequence of \cref{starall}. 

\begin{theorem}\label{intpdclose}
Let $(X,d)$ be a bounded $\sigma$-compact metric space.
Let $\nu$ be an internal probability measure on $(\NSE{X},\NSE{\BorelSets X})$.
Let $\ipd{\nu}$ be the internal push-down of $\nu$.
Then $\NSE{W}(\nu,\NSE{\ipd{\nu}})\approx 0$.
\end{theorem}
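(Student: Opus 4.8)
The plan is to obtain the result as an immediate instance of \cref{starall}, taking the standard charge $P$ in that theorem to be the internal push-down $\ipd{\nu}$. By \cref{pushdownlemma}, $\ipd{\nu}$ is a charge on $(X,\BorelSets X)$, so it is a legitimate choice for $P$, and $(X,d)$ is bounded and $\sigma$-compact by hypothesis. Consequently, the only thing that must be checked before invoking \cref{starall} is its single hypothesis: that $\nu(\NSE{B})\approx \NSE{(\ipd{\nu})}(\NSE{B})$ for every $B\in \BorelSets X$.

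To verify this, I would first evaluate $\NSE{(\ipd{\nu})}(\NSE{B})$ for a fixed standard Borel set $B$. Since $\ipd{\nu}$ is a standard function from $\BorelSets X$ into $[0,1]$, the transfer principle gives $\NSE{(\ipd{\nu})}(\NSE{B})=\NSE{(\ipd{\nu}(B))}$; and because $\ipd{\nu}(B)$ is a standard real number, the extension principle yields $\NSE{(\ipd{\nu}(B))}=\ipd{\nu}(B)$. By \cref{intpushdown}, $\ipd{\nu}(B)=\ST(\nu(\NSE{B}))$, so altogether $\NSE{(\ipd{\nu})}(\NSE{B})=\ST(\nu(\NSE{B}))$.

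It then remains only to observe that $\nu(\NSE{B})\approx \ST(\nu(\NSE{B}))$, which is immediate from the definition of the standard part: as $\nu$ is an internal probability measure, $\nu(\NSE{B})\in \NSE{[0,1]}$ is a finite hyperreal, so its standard part exists and is infinitely close to it. Combining the two facts gives $\nu(\NSE{B})\approx \NSE{(\ipd{\nu})}(\NSE{B})$ for every $B\in \BorelSets X$, which is precisely the hypothesis of \cref{starall}. The desired conclusion $\NSE{W}(\nu,\NSE{\ipd{\nu}})\approx 0$ then follows at once.

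Since the statement is billed as a direct consequence of \cref{starall}, I expect no genuine obstacle. The only point requiring care is the bookkeeping in the middle step: one must keep the external operation $\ST(\argdot)$ used to \emph{define} $\ipd{\nu}$ distinct from the internal star-extension $\NSE{\ipd{\nu}}$, and use transfer together with the extension principle to justify that evaluating the extended charge at $\NSE{B}$ simply returns the standard value $\ipd{\nu}(B)$. Once this identification is in place, matching the setup to \cref{starall} is purely mechanical.
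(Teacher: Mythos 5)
Your proposal is correct and matches the paper's intent exactly: the paper presents \cref{intpdclose} as a direct consequence of \cref{starall}, and your verification of its hypothesis—identifying $\NSE{(\ipd{\nu})}(\NSE{B})$ with the standard value $\ipd{\nu}(B)=\ST(\nu(\NSE{B}))$ via transfer, then noting $\nu(\NSE{B})\approx\ST(\nu(\NSE{B}))$—is precisely the bookkeeping the paper leaves implicit. No gaps; this is the same route, just written out in full.
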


Although the internal push-down of an internal probability measure always exists, it is merely finitely-additive in most cases. The properties of internal push-down are closely related to the internal probability measure via transfer principle. To get a countably additive probability measure, we shall use the standard part map to push down the Loeb measure.

\begin{definition}\label{defnpushdown}
Let $X$ be a Hausdorff space with Borel $\sigma$-algebra $\BorelSets X$,
let $\nu$ be an internal probability measure defined on $(\NSE{X},\NSE{\BorelSets X})$,
and let
\[
\cC=\{C\subset X: \ST^{-1}(C) \in \overline{\NSE{\BorelSets X}}\}.
\]
The \emph{external push-down measure} $\pd{\nu}$ is defined on the set $\cC$ by $\pd{\nu}(C)=\overline{\nu}(\ST^{-1}(C))$.
\end{definition}

The following two theorems guarantee that $\ST^{-1}(C)\in \overline{\NSE{\BorelSets X}}$ for all $C\in \BorelSets X$ under moderate assumptions.

\begin{theorem}[{\citep[][Thm.~4.3.2]{NSAA97}}]
Let $X$ be a regular topological space and let $P$ be an internal probability measure on $(\NSE{X},\NSE{\BorelSets X})$. Suppose $\NS{\NSE{X}}\in \overline{\NSE{\BorelSets X}}$. Then $\ST^{-1}(A)\in \overline{\NSE{\BorelSets X}}$ for all $A\in \BorelSets X$ (i.e., $\ST$ is Borel measurable).
\end{theorem}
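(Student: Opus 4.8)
The plan is to verify that the collection $\cC_0 := \{A \subseteq X : \ST^{-1}(A) \in \overline{\NSE{\BorelSets X}}\}$ is a $\sigma$-algebra containing the closed sets; since the closed sets generate $\BorelSets X$, this yields the conclusion. Closure of $\cC_0$ under countable unions is immediate from $\ST^{-1}(\bigcup_n A_n) = \bigcup_n \ST^{-1}(A_n)$, and closure under complementation follows from $\ST^{-1}(X \setminus A) = \NS{\NSE X} \setminus \ST^{-1}(A)$ \emph{together with} the standing hypothesis $\NS{\NSE X} \in \overline{\NSE{\BorelSets X}}$, which is exactly what makes the relative complement Loeb measurable. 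So the whole problem reduces to a single closed set $C$, with open complement $O := X \setminus C$.

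Next I would record the two nonstandard descriptions that regularity supplies. Separating a standard point $y \notin C$ from $C$ by disjoint open sets gives
\[
\ST^{-1}(C) = \NS{\NSE X} \cap \bigcap\,\{\NSE U : U \supseteq C,\ U\ \text{open}\},
\]
while choosing, for each $y \in O$, an open $V$ with $y \in V \subseteq \overline V \subseteq O$ gives
\[
\ST^{-1}(O) = \NS{\NSE X} \cap \bigcup\,\{\NSE V : \overline V \subseteq O\}.
\]
Each $\NSE U$ and $\NSE V$ is internal and $\NS{\NSE X}$ is Loeb measurable, so these exhibit $\ST^{-1}(C)$ as an intersection of measurable supersets $\NSE U \cap \NS{\NSE X}$ and $\ST^{-1}(O)$ as a union of measurable subsets $\NSE V \cap \NS{\NSE X}$. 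Writing $s := \Loeb{P}(\NS{\NSE X})$, $a := \inf_{U \supseteq C} \Loeb{P}(\NSE U \cap \NS{\NSE X})$ and $b := \sup_{\overline V \subseteq O} \Loeb{P}(\NSE V \cap \NS{\NSE X})$, we immediately get $\Loeb{P}^{*}(\ST^{-1}(C)) \le a$ and $\Loeb{P}_{*}(\ST^{-1}(O)) \ge b$, and the goal becomes to show that the outer and inner Loeb measures of $\ST^{-1}(C)$ agree.

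The technical core is the identity $a + b = s$. The inequality $a+b\le s$ is easy: taking $U = X \setminus \overline V$ pairs a neighbourhood of $C$ with a set $V$ whose closure avoids $C$, so $\NSE U$ and $\NSE V$ are disjoint. The reverse inequality is where the real work lies, and I expect it to be the main obstacle. It amounts to bounding $\Loeb{P}(\NSE F \cap \NS{\NSE X})$, for closed $F \subseteq O$, by $b$. The difficulty is that the neighbourhoods $\{V_y\}_{y \in F}$ furnished by regularity form an \emph{uncountable} family with no evident countable subfamily, and $F$ need not be a $G_\delta$, so a naive $\sigma$-algebra argument fails. The decisive tool is that every internal set contained in $\NS{\NSE X}$ has \emph{compact} standard part: an internal $A \subseteq \NSE F \cap \NS{\NSE X}$ has $\ST(A)$ compact and contained in $O$, so the cover $\{V_y\}$ admits a finite subcover, placing $A$ inside a single $\NSE{V^{*}}$ with $\overline{V^{*}} \subseteq O$. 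Combined with inner regularity of the Loeb measure (which lets one work with such internal sets) and with $\aleph_1$-saturation (to extract a countable subfamily realizing the supremum $b$ and keep all operations inside $\overline{\NSE{\BorelSets X}}$), this forces $\Loeb{P}(\NSE F \cap \NS{\NSE X}) \le b$, and hence $a+b=s$.

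Granting this, the conclusion is the matching of inner and outer measures. Choosing decreasing $U_n \supseteq C$ with $\Loeb{P}(\NSE{U_n}\cap \NS{\NSE X}) \to a$ and increasing $V_n$ (with $\overline{V_n}\subseteq O$) with $\Loeb{P}(\NSE{V_n}\cap \NS{\NSE X}) \to b$ produces measurable sets $G := \NS{\NSE X}\cap\bigcap_n \NSE{U_n} \supseteq \ST^{-1}(C)$ and $H := \NS{\NSE X}\cap\bigcup_n \NSE{V_n} \subseteq \ST^{-1}(O)$ with $\Loeb{P}(G)=a$ and $\Loeb{P}(H)=b$. One then checks, again via the precompactness of internal subsets of $\NS{\NSE X}$ and a finite-subcover argument, that $G$ and $\NS{\NSE X}\setminus H$ both agree with $\ST^{-1}(C)$ up to a Loeb-null set; since $a = s-b$, the inner and outer Loeb measures of $\ST^{-1}(C)$ coincide, so completeness of the Loeb measure gives $\ST^{-1}(C)\in\overline{\NSE{\BorelSets X}}$. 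This closes the reduction and the proof. The part I would budget the most care for is precisely the step converting the uncountable neighbourhood filter into a finite subcover, as this is where regularity, compactness of standard parts, and saturation must all be used in concert.
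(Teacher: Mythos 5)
The paper itself contains no proof of this statement---it is quoted verbatim from the reference \citep[Thm.~4.3.2]{NSAA97}---so your proposal can only be measured against the standard proof of that result. Much of what you do is correct: the reduction to a single closed set $C$ via the $\sigma$-algebra $\cC_0$ (using $\NS{\NSE{X}}\in\Loeb{\NSE{\BorelSets X}}$ for complements), both nonstandard descriptions $\ST^{-1}(C)=\NS{\NSE{X}}\cap\bigcap\{\NSE{U}: U\supseteq C \text{ open}\}$ and $\ST^{-1}(O)=\NS{\NSE{X}}\cap\bigcup\{\NSE{V}:\overline{V}\subseteq O\}$ (both need regularity, as you say), the inequality $a+b\le s$, and the proof of $s\le a+b$ via inner regularity of the Loeb measure, compactness of $\ST(A)$ for internal $A\subseteq\NS{\NSE{X}}$, and a finite subcover. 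That last argument is genuinely the right use of \cref{stcompact}.

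The gap is the final step, and it is fatal as stated. The three facts you establish---$\Loeb{P}^{*}(\ST^{-1}(C))\le a$ (from $G$), $\Loeb{P}_{*}(\ST^{-1}(O))\ge b$ (from $H$), and $a+b=s$---are all the \emph{same} one-sided inequality, since $\Loeb{P}^{*}(E)+\Loeb{P}_{*}(\NS{\NSE{X}}\setminus E)=s$ holds identically for every $E\subseteq\NS{\NSE{X}}$. Indeed, an arbitrary non-measurable $E$ satisfies all three conditions upon taking $G$ to be a measurable hull of $E$ and $H$ a measurable kernel of its complement (e.g.\ $E$ with outer measure $\tfrac{1}{2}$ and inner measure $0$ in a space of measure $1$ gives $a=b=\tfrac{1}{2}$, $a+b=s$); so these facts cannot imply measurability. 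What is missing is the complementary bound $\Loeb{P}_{*}(\ST^{-1}(C))\ge a$, equivalently $\Loeb{P}^{*}(\ST^{-1}(O))\le b$, i.e.\ the production of measurable sets \emph{inside} $\ST^{-1}(C)$ of measure close to $a$. Your proposed check that ``$G$ agrees with $\ST^{-1}(C)$ up to a Loeb-null set'' \emph{is} this missing bound, and the tools you invoke cannot deliver it: the compactness/finite-subcover argument applies only to internal sets contained in $\ST^{-1}(O)$, so applied to $G\setminus\ST^{-1}(C)=G\cap\ST^{-1}(O)$ it shows that every internal subset of this discrepancy is null, i.e.\ that its \emph{inner} measure is zero---which is automatic for any hull and says nothing---whereas completeness of the Loeb measure requires \emph{outer} measure zero. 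The standard proof fills exactly this hole with a saturation argument: fixing internal $D\subseteq\NS{\NSE{X}}$ with $\Loeb{P}(D)\ge s-\epsilon$, the internal sets $\cF_U=\{B: B\subseteq\NSE{U}\cap D,\ P(B)\ge a-2\epsilon\}$, indexed by open $U\supseteq C$, have the finite intersection property (the family of such $U$ is closed under finite intersections), so saturation of cardinality exceeding the number of open supersets of $C$ yields an internal $B\subseteq D\cap\bigcap_{U}\NSE{U}\subseteq\ST^{-1}(C)$ with $\Loeb{P}(B)\ge a-2\epsilon$. This step is absent from your proposal, and $\aleph_1$-saturation cannot substitute for it: for a general regular space the neighbourhood family of $C$ has no countable cofinal subfamily, which is precisely why the paper assumes the model is ``as saturated as it needs to be.''
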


\begin{theorem}[{\citep[][Thm.~5.6]{Markovpaper}}]
Let $X$ be a Cech-complete Tychnoff space with Borel $\sigma$-algebra $\BorelSets X$. Then $\NS{\NSE{X}}\in \overline{\NSE{\BorelSets X}}$.
\end{theorem}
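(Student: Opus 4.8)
The plan is to move the whole question to a compactification of $X$, where the standard part map is everywhere defined and already known to be Loeb measurable, and then to pull the conclusion back to $X$. Since $X$ is \v{C}ech-complete and Tychonoff, it embeds as a $G_{\delta}$ subset of a compact Hausdorff space $K$ (for instance $K=\beta X$), so that $X=\bigcap_{n\in\Nats}G_{n}$ with each $G_{n}$ open in $K$; in particular $X\in\BorelSets K$. I regard $\NSE{X}$ as an internal Borel subset of $\NSE{K}$, which is legitimate because $\NSE{X}\in\NSE{\BorelSets K}$, and I transport the given internal probability measure $\nu$ to $\NSE{K}$ by setting $\nu_{K}(B)=\nu(B\cap\NSE{X})$ for $B\in\NSE{\BorelSets K}$; this $\nu_{K}$ is an internal probability measure concentrated on $\NSE{X}$, and its Loeb completion lives on $\overline{\NSE{\BorelSets K}}$.

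First I would identify the near-standard points inside $\NSE{K}$. Let $\ST_{K}\colon\NSE{K}\to K$ be the standard part map; it is total since $K$ is compact, so $\NS{\NSE{K}}=\NSE{K}$. Because the topology of $X$ is the trace of that of $K$, the monad in $X$ of a point $p\in X$ equals the monad of $p$ in $K$ intersected with $\NSE{X}$; as $K$ is Hausdorff and compact, every $x\in\NSE{K}$ has a unique $K$-standard part, and a point $x\in\NSE{X}$ is therefore near-standard in $X$ exactly when $\ST_{K}(x)\in X$. This gives the identity $\NS{\NSE{X}}=\ST_{K}^{-1}(X)\cap\NSE{X}$.

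Next I would establish measurability over $K$. Since $K$ is compact Hausdorff it is regular, and $\NS{\NSE{K}}=\NSE{K}\in\overline{\NSE{\BorelSets K}}$ trivially, so the hypotheses of \citep[][Thm.~4.3.2]{NSAA97} hold for $K$; that theorem makes $\ST_{K}$ Loeb measurable, and since $X$ is Borel in $K$ we get $\ST_{K}^{-1}(X)\in\overline{\NSE{\BorelSets K}}$. Combining this with $\NSE{X}\in\NSE{\BorelSets K}\subseteq\overline{\NSE{\BorelSets K}}$ and the identity above yields $\NS{\NSE{X}}\in\overline{\NSE{\BorelSets K}}$. It then remains to descend from $K$ to $X$: by transfer of the fact that the Borel $\sigma$-algebra of a subspace is the trace $\sigma$-algebra, one has $\NSE{\BorelSets X}=\{B\cap\NSE{X}:B\in\NSE{\BorelSets K}\}$, and $\nu_{K}(B)=\nu(B\cap\NSE{X})$ by construction, so the inner and outer Loeb approximations of any subset of $\NSE{X}$ are the same whether computed from $\NSE{\BorelSets X}$-sets or from $\NSE{\BorelSets K}$-sets. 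Hence a subset of $\NSE{X}$ belongs to $\overline{\NSE{\BorelSets X}}$ if and only if it belongs to $\overline{\NSE{\BorelSets K}}$, and applying this to $\NS{\NSE{X}}\subseteq\NSE{X}$ gives the claim.

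The two appeals to compactness are routine once set up, so the genuine content, and the step I expect to be the main obstacle, is the last one: verifying that restricting the internal algebra $\NSE{\BorelSets K}$ and the internal measure $\nu_{K}$ to the internal set $\NSE{X}$ reproduces precisely the Loeb completion of $(\NSE{X},\NSE{\BorelSets X},\nu)$, so that Loeb measurability really does transfer between the two spaces. The monad identity in the second step is the other load-bearing point, and it is there that \v{C}ech-completeness (through the $G_{\delta}$ representation, which guarantees $X\in\BorelSets K$) and complete regularity are actually used.
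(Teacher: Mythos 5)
This theorem is one the paper does not prove at all: it is imported as a black box from \citep[][Thm.~5.6]{Markovpaper}, so there is no in-paper proof to compare yours against, and I can only judge your argument on its own merits. It is correct, and all three load-bearing steps check out. The monad identity is right: since the topology of $X$ is the trace of that of $K=\beta X$, the monad of $p\in X$ computed in $X$ equals its monad computed in $K$ intersected with $\NSE{X}$, and since $K$ is compact Hausdorff every point of $\NSE{K}$ has a unique standard part (existence is Robinson's criterion, quoted in the paper as \cref{nonstcompact}; uniqueness is Hausdorffness), whence $\NS{\NSE{X}}=\ST_{K}^{-1}(X)\cap\NSE{X}$. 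The appeal to \citep[][Thm.~4.3.2]{NSAA97} is legitimate because $K$ is regular and $\NS{\NSE{K}}=\NSE{K}$ is trivially Loeb measurable, and \v{C}ech-completeness enters exactly where you say: $X$ is $G_{\delta}$, hence Borel, in $K$, so $\ST_{K}^{-1}(X)\in\overline{\NSE{\BorelSets{K}}}$. The descent step, which you rightly flag as the crux, also holds: by transfer $\NSE{\BorelSets{X}}=\{B\cap\NSE{X}:B\in\NSE{\BorelSets{K}}\}=\{B\in\NSE{\BorelSets{K}}:B\subseteq\NSE{X}\}$, and $\nu_{K}(B)=\nu(B\cap\NSE{X})$; so for $S\subseteq\NSE{X}$ every internal $A\subseteq S$ lies in both algebras with the same measure, and every internal $B\supseteq S$ can be replaced by $B\cap\NSE{X}$ without changing its measure, so inner and outer approximations agree in the two internal spaces; since Loeb measurability for an internal probability measure is equivalent to equality of inner and outer measure, $S\in\overline{\NSE{\BorelSets{X}}}$ iff $S\in\overline{\NSE{\BorelSets{K}}}$. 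Two remarks. First, the statement should be read as universal Loeb measurability, i.e.\ with respect to every internal probability measure on $(\NSE{X},\NSE{\BorelSets{X}})$; your proof delivers this since $\nu$ is arbitrary. Second, your argument proves slightly more than is stated: \v{C}ech-completeness is used only through ``$X$ is Borel in some compactification,'' so the conclusion holds for any Tychonoff space that embeds as a Borel subset of a compact Hausdorff space, with the $G_{\delta}$ property playing no further role.
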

In particular, we have $\NS{\NSE{X}}\in \overline{\NSE{\BorelSets X}}$ for regular locally compact spaces; for complete metric spaces; and for regular $\sigma$-compact spaces.

For general Hausdorff Borel measurable space $(Y,\BorelSets Y)$, the external push-down measure $\pd{\nu}$ may not be a countably additive probability measure. In fact, if $\Loeb{\nu}(\NS{\NSE{X}})=0$ then $\pd{\nu}$ is a null measure on $(X,\BorelSets X)$. However, when $Y$ is compact, the following theorem guarantees that $\pd{\nu}$ is a countably additive probability measure on $(Y,\BorelSets Y)$.

\begin{theorem}[{\citep[][Thm.~13.4.1]{NDV}}]\label{pushdown}
Let $X$ be a Hausdorff space equipped with Borel $\sigma$-algebra $\BorelSets {X}$,
and let $\nu$ be an internal probability measure defined on $(\NSE{X}, \NSE{\BorelSets{X}})$ with $\overline{\nu}(\NS{\NSE{X}})=1$.
Then the external push-down measure $\pd{\nu}$ of $\nu$ is the completion of a countably additive regular Borel probability measure.
\end{theorem}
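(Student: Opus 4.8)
The plan is to exhibit $\pd{\nu}$ as the completion of the pushforward of the Loeb measure $\Loeb{\nu}$ along the standard part map. By Loeb's construction $\Loeb{\nu}$ is a complete, countably additive probability measure on $(\NSE{X},\overline{\NSE{\BorelSets X}})$, and it is inner regular by internal sets: $\Loeb{\nu}(B)=\sup\{\Loeb{\nu}(S): S\subseteq B,\ S\in\NSE{\BorelSets X}\}$ for every $B\in\overline{\NSE{\BorelSets X}}$. Since $\Loeb{\nu}(\NS{\NSE X})=1$ by hypothesis, I first use this to concentrate the measure on a $\sigma$-compact set. Applying inner regularity to $B=\NS{\NSE X}$, choose for each $n\in\Nats$ an internal $S_n\in\NSE{\BorelSets X}$ with $S_n\subseteq\NS{\NSE X}$ and $\Loeb{\nu}(S_n)\geq 1-\tfrac1n$. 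The key geometric input (Anderson's lemma, \citet{anderson87}) is that the standard part of an internal subset of $\NS{\NSE X}$ is compact in the Hausdorff space $X$; hence $K_n:=\ST[S_n]$ is compact and $S_n\subseteq\ST^{-1}(K_n)$. Writing $K:=\bigcup_n K_n$, a $\sigma$-compact Borel subset of $X$, we have $\bigcup_n S_n\subseteq\ST^{-1}(K)\subseteq\NS{\NSE X}$, so $\ST^{-1}(K)$ is Loeb measurable with $\Loeb{\nu}(\ST^{-1}(K))=1$; equivalently $\Loeb{\nu}(\ST^{-1}(X\setminus K))=0$.

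Next I would verify that $\ST$ is Borel measurable, i.e.\ $\BorelSets X\subseteq\cC$. It suffices to treat a closed set $C$, and since $C=\bigcup_n(C\cap K_n)$ with each $C\cap K_n$ compact, it suffices to treat a compact $D\subseteq X$. For compact $D$ one has, using only the Hausdorff property, the monad description $\ST^{-1}(D)=\NS{\NSE X}\cap\bigcap\{\NSE U: D\subseteq U\text{ open}\}$; the portion of $\ST^{-1}(C)$ lying over $X\setminus K$ is contained in a Loeb-null set and is measurable by completeness. Granting measurability, $\mu(A):=\Loeb{\nu}(\ST^{-1}(A))$ is well defined on $\BorelSets X$, and it is countably additive because $\Loeb{\nu}$ is and $\ST^{-1}$ commutes with countable disjoint unions. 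Regularity is then the statement that $\mu$ is inner regular by compacta: given $A\in\BorelSets X$ and $\epsilon>0$, inner regularity of $\Loeb{\nu}$ supplies an internal $S\subseteq\ST^{-1}(A)\subseteq\NS{\NSE X}$ with $\Loeb{\nu}(S)>\mu(A)-\epsilon$, and $\ST[S]$ is a compact subset of $A$ with $\mu(\ST[S])\geq\Loeb{\nu}(S)$; outer regularity follows by complementation within the finite measure $\mu$.

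Finally I would identify $\pd{\nu}$ with the completion $\overline{\mu}$ of the regular Borel probability measure $\mu:=\pd{\nu}|_{\BorelSets X}$. The inclusion $\mathrm{dom}(\overline{\mu})\subseteq\cC$ with agreement of the two measures is immediate from completeness of $\Loeb{\nu}$: for $A\in\BorelSets X$ and $N$ contained in a Borel $\mu$-null set, $\ST^{-1}(N)$ is Loeb-null. For the reverse inclusion, the inner regularity by compacta established above shows every $C\in\cC$ is squeezed between a Borel set and a $\mu$-null Borel set, so $C\in\mathrm{dom}(\overline{\mu})$.

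The step I expect to be the main obstacle is the measurability-and-regularity argument of the second paragraph in the bare Hausdorff setting. Measurability and tightness are in fact intertwined: showing $\NS{\NSE X}\cap\bigcap\{\NSE U: D\subseteq U\}$ is Loeb measurable amounts to showing its outer value $\inf\{\Loeb{\nu}(\NSE U): D\subseteq U\text{ open}\}$ coincides with the inner value $\sup\{\Loeb{\nu}(S): S\subseteq\ST^{-1}(D)\text{ internal}\}$, and without metrizability or second countability there is no canonical countable neighbourhood base of $D$ to force this directly. The resolution hinges on playing the two nonstandard approximations against each other — compactness of standard parts of internal sets (Anderson's lemma) producing compact inner approximations $\ST[S]\subseteq D$, and monads of compacta producing the outer ones — together with inner regularity of $\Loeb{\nu}$ by internal sets and completeness, which is exactly the content imported from \citet[Thm.~13.4.1]{NDV}.
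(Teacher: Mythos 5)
You should first be aware that the paper does not prove \cref{pushdown} at all: it is imported verbatim from \citep[Thm.~13.4.1]{NDV}, so your proposal has to stand entirely on its own, and in the stated generality it does not. The decisive flaw is your ``key geometric input'': you invoke Anderson's lemma as saying that the standard part of an internal subset of $\NS{\NSE X}$ is compact \emph{in a Hausdorff space}. That lemma requires regularity --- the paper itself quotes it as \cref{stcompact} with the hypothesis ``Hausdorff regular'' --- and it is genuinely false for bare Hausdorff spaces. Concretely, let $X$ be the real line retopologized so that the open sets are generated by the usual open intervals together with the sets $U\setminus K$, where $U$ is a usual open set and $K=\{1/n : n\in\Nats\}$; this space is Hausdorff but not regular. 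Fix an infinite $M\in\NSE{\Nats}$ and, by transfer, an internal sequence $(x_n)_{n\leq M}$ of $\NSE{}$irrational points with $|x_n-1/n|<1/M$ (so $x_n\notin\NSE{K}$). In this topology the monad of $0$ consists of the infinitesimals \emph{not} lying in $\NSE{K}$, while the monad of each $1/n$ is its usual monad; hence every $x_n$ is near-standard, the internal set $A=\{x_n : n\leq M\}$ satisfies $A\subset \NS{\NSE X}$, and $\ST[A]=\{0\}\cup K$, which is not compact (cover it by $\Reals\setminus K$ together with a small interval around each $1/n$; no finite subfamily suffices). Since your $\sigma$-compact carrier $\bigcup_n\ST[S_n]$, your reduction of Borel measurability to compact sets, your inner regularity by compacta, and your identification of $\cC$ with the completion all rest on this lemma, your argument proves the theorem only under the additional hypothesis that $X$ is regular --- which is also exactly the hypothesis of the measurability theorem \citep[Thm.~4.3.2]{NSAA97} quoted in the paper. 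You correctly flag the bare Hausdorff case as ``the main obstacle,'' but the resolution you sketch appeals to Anderson's lemma again, so the gap is acknowledged rather than closed.

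The parts of your argument that do not pass through compactness of standard parts are sound. The monad description $\ST^{-1}(D)=\NS{\NSE X}\cap\bigcap\{\NSE U : D\subset U\ \text{open}\}$ for compact $D$ really does use only Hausdorffness, and with sufficient saturation it yields Loeb measurability of $\ST^{-1}(D)$, since a downward-directed intersection of fewer-than-$\kappa$ internal sets is Loeb measurable with measure equal to the infimum of the measures (a routine saturation argument, which is the precise form of the ``inner equals outer'' matching you gesture at). Countable additivity and both inclusions in your completion step are likewise correct once $\ST[S]$ is known to be compact. So what you have written is, in essence, the standard argument for regular Hausdorff spaces; that version is complete modulo routine details, and it suffices for every use the paper actually makes of \cref{pushdown}, where $X$ is a metric space or a compactification of one. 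But as a proof of the statement with ``Hausdorff'' alone it fails, and repairing it would require an idea that avoids compactness of standard parts of internal sets altogether.
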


Given an internal probability measure $\nu$ on $(\NSE{X},\NSE{\BorelSets X})$. It is easy to see that the total variation distance between $\nu$ and $\NSE{\pd{\nu}}$ may be large. For example, if $\nu$ is an internal probability measure concentrating on some infinitesimal $\epsilon$ then $\pd{\nu}$ is a degenerate probability measure at point $0$. The total variation distance between $\nu$ and $\NSE{\pd{\nu}}$ is $1$ in this case. However, we show that $\nu$ and $\NSE{\pd{\nu}}$ are close in Wasserstein metric. We start by stating the following well-known definition.

\begin{definition}
Let $X$ be a topological space and let $(X,\BorelSets X,P)$ be a Borel probability space. A set $A\subset X$ is a $P$-continuity set if the boundary $\boundary{A}$ is contained in a measure $0$ set.
\end{definition}

Recall that a countably additive probability measure $P$ on a Borel measurable space $(X, \BorelSets X)$ is Radon provided that $P(E)=\sup\{P(K): K\ \text{compact and }\ K\subset E \}$. The following result, due to Robert Anderson, is the first major result on representing standard measures using nonstandard measures via the standard part map.

\begin{lemma} [{\citep[][Thm.~4.1]{NSAA97}}]\label{stpreserve} Let $(X,\BorelSets X,P)$ be a countably additive Radon probability measure. Then $\ST$ is measure-preserving from $(\NSE{X},\overline{\NSE{\BorelSets X}},\overline{\NSE{P}})$ to $(X,\BorelSets X,P)$, i.e $P(A)=\overline{\NSE{P}}(\ST^{-1}(A))$ for all $A\in \BorelSets X$.
\end{lemma}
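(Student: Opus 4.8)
The plan is to reduce the identity $P(A)=\overline{\NSE{P}}(\ST^{-1}(A))$ to the case of compact sets, where it can be read off directly from transfer, and then to bootstrap up to all Borel sets using the inner regularity guaranteed by the Radon hypothesis. Throughout, the central difficulty is that $\ST^{-1}(A)$ is an external set, so I cannot manipulate it as an internal object; the whole argument is organized around trapping $\ST^{-1}(A)$ between genuine Loeb-measurable sets whose measures pinch together.

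First I would treat a compact $K\subseteq X$. For every open $U\supseteq K$ I claim $\NSE{K}\subseteq \ST^{-1}(K)\subseteq \NSE{U}$. The right inclusion holds because $\ST(y)\in K\subseteq U$ forces $y$ into the monad of $\ST(y)$, which is contained in $\NSE{U}$ since $U$ is open; the left inclusion is the nonstandard characterization of compactness, namely that every point of $\NSE{K}$ is near-standard with standard part in $K$. Since $\NSE{K}$ and $\NSE{U}$ are internal, transfer computes their Loeb measures as $\overline{\NSE{P}}(\NSE{K})=\ST(\NSE{P}(\NSE{K}))=\ST(\NSE{(P(K))})=P(K)$ and likewise $\overline{\NSE{P}}(\NSE{U})=P(U)$. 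Thus $\ST^{-1}(K)$ is squeezed between two Loeb-measurable sets; letting $U$ shrink and using outer regularity of $P$ (so $\inf_{U\supseteq K}P(U)=P(K)$) shows $\ST^{-1}(K)$ is Loeb measurable with $\overline{\NSE{P}}(\ST^{-1}(K))=P(K)$. As a byproduct, since $\NSE{K}\subseteq \NS{\NSE{X}}$ for every compact $K$, taking the supremum over $K$ and invoking inner regularity ($\sup_K P(K)=P(X)=1$) yields the crucial global fact $\overline{\NSE{P}}(\NS{\NSE{X}})=1$, i.e.\ the Loeb measure concentrates entirely on the near-standard points.

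Now fix an arbitrary $A\in \BorelSets X$. By inner regularity choose increasing compact sets $K_n\subseteq A$ with $P(K_n)\uparrow P(A)$, and compact $L_m\subseteq X\setminus A$ with $P(L_m)\uparrow P(X\setminus A)$. Set $S=\bigcup_n \ST^{-1}(K_n)$ and $T=\bigcup_m \ST^{-1}(L_m)$; these are Loeb measurable (countable unions), with $\overline{\NSE{P}}(S)=P(A)$ and $\overline{\NSE{P}}(T)=P(X\setminus A)$ by continuity from below and the compact case. Since $\ST$ is single-valued on $\NS{\NSE{X}}$, the sets $\ST^{-1}(A)$ and $\ST^{-1}(X\setminus A)$ are disjoint, so $S$ and $T$ are disjoint and $S\cup T\subseteq \NS{\NSE{X}}$ with $\overline{\NSE{P}}(S\cup T)=P(A)+P(X\setminus A)=1=\overline{\NSE{P}}(\NS{\NSE{X}})$. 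Hence $\NS{\NSE{X}}\setminus(S\cup T)$ is Loeb null. Because $S\subseteq \ST^{-1}(A)\subseteq \NS{\NSE{X}}$ and $\ST^{-1}(A)$ is disjoint from $T$, the difference $\ST^{-1}(A)\setminus S$ lies inside this null set, so it is Loeb measurable (and null). Therefore $\ST^{-1}(A)=S\cup(\ST^{-1}(A)\setminus S)$ is Loeb measurable with $\overline{\NSE{P}}(\ST^{-1}(A))=\overline{\NSE{P}}(S)=P(A)$, as required.

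The step I expect to be the main obstacle is precisely closing this sandwich, i.e.\ simultaneously establishing measurability of the external set $\ST^{-1}(A)$ and the exact value of its measure. This hinges on using \emph{both} directions of regularity together with the concentration $\overline{\NSE{P}}(\NS{\NSE{X}})=1$: inner regularity supplies the internal approximants from within $A$ and within its complement, while the fact that these two contributions already exhaust the full Loeb mass on $\NS{\NSE{X}}$ is what forces the remaining sliver to be null. Without the Radon assumption either regularity could fail and the pinching argument would collapse, which is exactly why the hypothesis is needed.
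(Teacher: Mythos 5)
Your proof is correct. Note, however, that the paper never proves this lemma at all: it is quoted verbatim as Anderson's theorem from \citep[Thm.~4.1]{NSAA97}, so there is no internal proof to compare against; what you have written is essentially the standard argument for Anderson's result. Your structure—first the compact case, trapping the external set $\ST^{-1}(K)$ between the internal sets $\NSE{K}$ and $\NSE{U}$ (Robinson's characterization of compactness, quoted in the paper as \cref{nonstcompact}, gives the left inclusion; monads and openness give the right), computing the Loeb measures of the internal bounds by transfer, and using completeness of the Loeb measure to close the sandwich; then the passage to arbitrary Borel $A$ via inner regularity applied to both $A$ and $X\setminus A$, with the concentration $\overline{\NSE{P}}(\NS{\NSE{X}})=1$ forcing the remaining sliver $\ST^{-1}(A)\setminus S$ to be null—is exactly how this theorem is proved in the literature. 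Two hypotheses you use implicitly deserve explicit mention: $X$ must be Hausdorff, since you need standard parts to be unique (for the disjointness of $\ST^{-1}(A)$ and $\ST^{-1}(X\setminus A)$) and compact sets to be closed; and the outer regularity at compacts that you invoke is not part of the paper's definition of Radon (which is stated only as inner regularity by compacts), but it follows in one line by applying inner regularity to $X\setminus K$ and passing to complements. Adding those two remarks would make the argument fully self-contained.
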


The following result gives a nonstandard characterization of compact sets.

\begin{theorem}[{\citep[][Thm.~3.5.1]{NSAA97}}]\label{nonstcompact}
A set $A\subset X$ is compact if and only if for each $y\in \NSE{A}$, there is an $x\in A$ such that $y$ is in the monad of $x$.
\end{theorem}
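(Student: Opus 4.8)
The plan is to prove both implications directly from the definition of the monad, using the open cover together with transfer for one direction and saturation for the other. It is cleanest to read the statement as: $A$ is compact if and only if $\NSE{A} \subseteq \bigcup_{x \in A} \mathrm{monad}(x)$, where $\mathrm{monad}(x) = \bigcap_{x \in U \in \topology} \NSE{U}$, so throughout I would work with monads rather than with the standard part map.

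For the forward direction I would argue by contradiction. Suppose $A$ is compact but some $y \in \NSE{A}$ lies in no monad of a point of $A$. Then for each $x \in A$ there is an open $U_x \ni x$ with $y \notin \NSE{U_x}$; otherwise $y$ would lie in every $\NSE{U}$ with $x \in U$ open, i.e.\ in $\mathrm{monad}(x)$. The family $\{U_x : x \in A\}$ is an open cover of $A$, so compactness yields $x_1, \dots, x_n$ with $A \subseteq \bigcup_{i=1}^{n} U_{x_i}$. Since the star of a finite union is the finite union of the stars, transfer gives $\NSE{A} \subseteq \bigcup_{i=1}^{n} \NSE{U_{x_i}}$, so $y \in \NSE{U_{x_i}}$ for some $i$, contradicting the choice of $U_{x_i}$.

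For the converse I would assume every point of $\NSE{A}$ is near-standard with standard part in $A$ and show that an arbitrary open cover $\{U_\alpha\}_{\alpha \in I}$ of $A$ has a finite subcover. Were this to fail, the internal sets $\NSE{A} \setminus \NSE{U_\alpha} = \NSE{A \setminus U_\alpha}$ would enjoy the finite intersection property, because a finite subfamily has empty intersection precisely when the corresponding $U_\alpha$ already cover $A$, which transfers from the standard statement. Saturation then supplies a point $y$ in the total intersection, so $y \in \NSE{A}$ while $y \notin \NSE{U_\alpha}$ for every $\alpha$. By hypothesis $y \in \mathrm{monad}(x)$ for some $x \in A$, and since the cover is of $A$ there is $\alpha_0$ with $x \in U_{\alpha_0}$; as $U_{\alpha_0}$ is open, $\mathrm{monad}(x) \subseteq \NSE{U_{\alpha_0}}$, forcing $y \in \NSE{U_{\alpha_0}}$, a contradiction. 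Hence $A$ is compact.

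The one delicate point is the appeal to saturation in the converse: the family $\{\NSE{A \setminus U_\alpha}\}_{\alpha \in I}$ is indexed by the open cover, so I need the model to be saturated at the cardinality of $I$, not merely countably saturated. This is exactly what the blanket hypothesis that the model is ``as saturated as it needs to be'' is there to guarantee, and it is the only step where this strength is used; everything else is transfer and the elementary inclusion $\mathrm{monad}(x) \subseteq \NSE{U}$ for open $U \ni x$.
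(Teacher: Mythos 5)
Your proof is correct. The paper offers no proof of this statement---it is quoted as Theorem~3.5.1 of the cited text, being Robinson's classical characterization of compactness---and your argument is exactly the standard one: monads refine an open cover, then finite subcover plus transfer in one direction, and saturation applied to the internal complements $\NSE{A}\setminus\NSE{U_\alpha}$ in the other, together with the correct observation that saturation is needed at the cardinality of the cover, which the paper's blanket assumption that the model is ``as saturated as it needs to be'' supplies.
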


\begin{lemma}[{\citep[][Exercise.~4.27]{NSAA97}}]\label{stcompact}
If $X$ is a Hausdorff regular space and $A$ is an internal subset of $\NS{\NSE{X}}$,  then $E=\ST(A)$ is compact.
\end{lemma}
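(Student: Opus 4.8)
The plan is to prove that $E=\ST(A)$ is compact directly from the open‑cover definition, extracting a finite subcover by combining the saturation principle with regularity; this avoids having to describe $\NSE{E}$ element‑by‑element, which is what makes the Robinson criterion \cref{nonstcompact} awkward to apply here. First observe that since $A\subseteq \NS{\NSE{X}}$, every $a\in A$ lies in the monad of a \emph{unique} standard point — uniqueness is exactly where the Hausdorff hypothesis is used — so $\ST$ is a well‑defined function on $A$ and $E=\ST(A)$ is a genuine subset of $X$. Now fix an arbitrary family of open sets $\{G_i\}_{i\in I}$ covering $E$; the goal is to produce a finite subcover.

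The key steps, in order, are as follows. (1) Using regularity, refine the cover: for each $x\in E$ pick $i(x)$ with $x\in G_{i(x)}$ and then an open $H_x$ with $x\in H_x\subseteq \cl(H_x)\subseteq G_{i(x)}$, so that $\{H_x\}_{x\in E}$ still covers $E$. (2) Push the cover upward: for each $a\in A$ we have $\ST(a)\in E$, and $a$ is infinitely close to $\ST(a)\in H_{\ST(a)}$ with $H_{\ST(a)}$ open, so $a\in \NSE{H_{\ST(a)}}$; hence $A\subseteq \bigcup_{x\in E}\NSE{H_x}$. (3) Apply saturation to the internal sets $\{A\setminus \NSE{H_x}\}_{x\in E}$: if this family had the finite intersection property, saturation would produce an element of $A$ avoiding every $\NSE{H_x}$, contradicting step (2); therefore some finite subfamily has empty intersection, i.e. $A\subseteq \bigcup_{x\in F}\NSE{H_x}=\NSE{(\bigcup_{x\in F}H_x)}$ for a finite $F\subseteq E$.

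The main obstacle is the final descent from this nonstandard containment back to a standard finite subcover, since $A\subseteq \NSE{W}$ for a standard open $W$ forces only $\ST(A)\subseteq \cl(W)$ — a point of $A$ can sit in $\NSE{W}$ while its standard part lands on $\partial W$ — not $\ST(A)\subseteq W$. This is precisely where regularity pays off: with $W=\bigcup_{x\in F}H_x$ we get $E=\ST(A)\subseteq \cl(W)=\bigcup_{x\in F}\cl(H_x)$, where the closure distributes because the union is \emph{finite}, and since each $\cl(H_x)\subseteq G_{i(x)}$ we conclude $E\subseteq \bigcup_{x\in F}G_{i(x)}$, a finite subcover. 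Two minor bookkeeping points remain: the index set in step (3) has cardinality at most $|X|$, so the blanket assumption that the model is as saturated as needed supplies the required saturation; and the auxiliary containment $\ST(A)\subseteq\cl(W)$ is the one routine verification I would still spell out, obtained by noting that for any open $V\ni\ST(a)$ one has $a\in\NSE{V}\cap\NSE{W}=\NSE{(V\cap W)}\neq\emptyset$, whence $V\cap W\neq\emptyset$ by transfer and $\ST(a)\in\cl(W)$.
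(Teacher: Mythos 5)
Your proof is correct, and in fact the paper offers no proof of \cref{stcompact} at all---it is quoted as Exercise~4.27 of \citep{NSAA97}---so your argument supplies details the paper leaves to the literature. Each step checks out: Hausdorffness is exactly what makes $\ST$ single-valued on $A$; the inclusion $a\in\NSE{H_{\ST(a)}}$ is immediate from the definition of the monad as $\bigcap_{x\in U}\NSE{U}$; the sets $A\setminus\NSE{H_x}$, $x\in E$, are internal and form a family of cardinality at most $|X|$, so the paper's blanket assumption that the model is as saturated as needed lets you conclude from their empty total intersection that some finite subfamily already has empty intersection; and the final descent $E\subseteq\cl\bigl(\bigcup_{x\in F}H_x\bigr)=\bigcup_{x\in F}\cl(H_x)\subseteq\bigcup_{x\in F}G_{i(x)}$ is valid precisely because $F$ is finite. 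You also correctly isolate where regularity is indispensable: from $A\subseteq\NSE{W}$ with $W$ open one can only conclude $\ST(A)\subseteq\cl(W)$ (an infinitesimal can sit inside $\NSE{W}$ with its standard part on the boundary), and the interpolated sets $H_x$ with $\cl(H_x)\subseteq G_{i(x)}$ are what convert that closure back into membership in the original cover. This cover-refinement-plus-saturation argument is the standard route for this exercise; the alternative would be to verify Robinson's criterion (\cref{nonstcompact}) for $E$, but as you note that requires a handle on $\NSE{E}$, which bears no simple relation to the internal set $A$, so your choice of method is the natural one.
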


The following result is a partial converse of \cref{stpreserve}. 
It follows immediately from the fact that every Borel probability measure on a Polish space is Radon. 

\begin{lemma}\label{pushradon}
Let $X$ be a $\sigma$-compact metric space with Borel $\sigma$-algebra $\BorelSets X$.
Let $\nu$ be an internal probability measure on $(X,\BorelSets X)$.
Then the push-down measure $\pd{\nu}$ is a Radon measure on $(X,\BorelSets X)$.
\end{lemma}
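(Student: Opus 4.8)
The plan is to verify two things in turn: first that $\pd{\nu}$, restricted to $\BorelSets X$, is a genuine finite countably additive Borel measure, and second that it is inner regular by compact sets. For the first point, note that a metric space is regular, and a $\sigma$-compact regular space satisfies $\NS{\NSE X}\in\overline{\NSE{\BorelSets X}}$ by the two measurability theorems stated after \cref{defnpushdown}; consequently $\ST$ is Borel measurable, so $\ST^{-1}(A)\in\overline{\NSE{\BorelSets X}}$ for every $A\in\BorelSets X$ and therefore $\BorelSets X\subseteq\cC$, so $\pd{\nu}$ is defined on all of $\BorelSets X$. Since $\Loeb{\nu}$ is countably additive and $\ST^{-1}$ commutes with countable unions and preserves disjointness, $\pd{\nu}$ is countably additive, with total mass $\pd{\nu}(X)=\Loeb{\nu}(\NS{\NSE X})\le 1$. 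I emphasize that this mass may be strictly less than $1$ (for instance when $\nu$ concentrates on a non-near-standard point), so $\pd{\nu}$ is in general only a finite measure, not a probability measure; but finiteness is all that the Radon property requires.

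For the second, and main, point I would argue directly from the inner regularity of the Loeb measure together with \cref{stcompact}. Fix $E\in\BorelSets X$ and $\epsilon>0$. By inner regularity of $\Loeb{\nu}$ with respect to internal sets, there is an internal $A\in\NSE{\BorelSets X}$ with $A\subseteq\ST^{-1}(E)$ and $\Loeb{\nu}(A)=\ST(\nu(A))>\pd{\nu}(E)-\epsilon$. Because $\ST^{-1}(E)\subseteq\NS{\NSE X}$, the set $A$ is an internal subset of the near-standard part, so \cref{stcompact} shows that $K\defas\ST(A)$ is compact. Moreover $K\subseteq E$ (every point of $A$ has its standard part in $E$) and $A\subseteq\ST^{-1}(K)$, whence monotonicity of $\Loeb{\nu}$ gives $\pd{\nu}(K)=\Loeb{\nu}(\ST^{-1}(K))\ge\Loeb{\nu}(A)>\pd{\nu}(E)-\epsilon$. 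Since $\epsilon$ is arbitrary and $K\subseteq E$ is compact, this yields $\pd{\nu}(E)=\sup\{\pd{\nu}(K):K\subseteq E\ \text{compact}\}$, i.e.\ $\pd{\nu}$ is Radon.

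Alternatively, once the first point is in hand one may finish on the standard side: every finite Borel measure on a metric space is inner regular by closed sets, and writing $X=\bigcup_n K_n$ with $K_n$ compact and increasing upgrades this to inner regularity by compact sets, since $F\cap K_n\uparrow F$ for closed $F$ while each $F\cap K_n$ is compact. This is the content of the remark that the result ``follows from every Borel measure on a Polish space being Radon.''

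The step I expect to require the most care is precisely this Radon upgrade, and the reason is conceptual: a $\sigma$-compact metric space need not be Polish (e.g.\ $\Rationals$), so one cannot literally invoke a completeness-based tightness theorem. What actually drives the argument is inner regularity by \emph{compact} sets, and the delicate point is that passing from the internal witness $A$ to its compact standard part $K=\ST(A)$ loses no mass: the containment $A\subseteq\ST^{-1}(\ST(A))$ together with monotonicity of the Loeb measure is exactly what prevents such a loss. Ensuring that all the quantities $\pd{\nu}(E)$ and $\pd{\nu}(K)$ are defined — that is, that the relevant $\ST$-preimages are Loeb measurable — is the role played by regularity and $\sigma$-compactness of $X$ in the first paragraph.
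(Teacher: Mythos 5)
Your proof is correct, and it takes a genuinely different (and more careful) route than the paper. The paper dispatches \cref{pushradon} in a single sentence: it ``follows immediately from the fact that every Borel probability measure on a Polish space is Radon'' --- there is no further argument. Your main argument is instead a self-contained nonstandard proof: inner regularity of the Loeb measure by internal sets produces an internal $A\subseteq\ST^{-1}(E)$ of nearly full measure, \cref{stcompact} (applicable since metric spaces are Hausdorff regular) turns it into a compact $K=\ST(A)\subseteq E$, and the containment $A\subseteq\ST^{-1}(K)$ ensures no mass is lost; every step here checks out, including the measurability bookkeeping in your first paragraph. Moreover, your criticism of the paper's one-liner is well taken: a $\sigma$-compact metric space need not be Polish ($\Rationals$ being the standard example), so the cited fact does not literally apply. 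What is true, and what your closing paragraph supplies, is that every finite Borel measure on a metric space is inner regular by closed sets, and $\sigma$-compactness upgrades closed witnesses to compact ones; alternatively one could repair the paper's appeal by pushing the measure forward to the completion $\comp{X}$, in which $X$ sits as an $F_\sigma$ set, and pulling compact witnesses back. In short, the paper's (repaired) standard argument is shorter, while yours stays entirely inside the nonstandard framework, makes explicit where Loeb measurability of $\ST^{-1}(E)$ comes from (the two theorems following \cref{defnpushdown}), and correctly flags that $\pd{\nu}$ may have total mass strictly less than $1$, so ``Radon'' must be read as a finite Radon measure --- a point the paper glosses over, since its definition of Radon is stated only for probability measures and the probability hypothesis is added separately in \cref{cpartition}.
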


\begin{lemma}\label{cpartition}
Let $X$ be a $\sigma$-compact metric space with Borel $\sigma$-algebra $\BorelSets X$. Let $\nu$ be an internal probability measure on $(\NSE{X},\NSE{\BorelSets X})$. Suppose $\pd{\nu}$ is a countably additive probability measure on $(X,\BorelSets X)$. For every $n\in \Nats$, there exists a countable partition $\{A_i:i\in \Nats\}$ of $X$ consisting of Borel sets with diameter no greater than $\frac{1}{n}$ such that $\nu(\NSE{A_i})\approx \NSE{\pd{\nu}}(\NSE{A_i})$ for all $i\in \Nats$.
\end{lemma}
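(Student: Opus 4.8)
The plan is to separate the analytic heart of the statement from a standard measure-theoretic construction. For a standard Borel set $A$ the transfer principle gives that $\NSE{\pd\nu}(\NSE A)$ is the standard real $\pd\nu(A)$, so the assertion $\nu(\NSE{A_i})\approx\NSE{\pd\nu}(\NSE{A_i})$ is exactly $\ST(\nu(\NSE{A_i}))=\pd\nu(A_i)$, i.e. the internal and external push-downs agree on $A_i$. I would first prove that this holds whenever $A$ is a $\pd\nu$-continuity set. The key is the inclusion chain
\[
\ST^{-1}(\interior{A})\subseteq \NSE A\cap \NS{\NSE X}\subseteq \ST^{-1}(\cl(A)),
\]
valid because $\interior{A}$ is open and $\cl(A)$ is closed, combined with $\Loeb\nu(\NS{\NSE X})=1$ (which follows from the hypothesis that $\pd\nu$ is a probability measure, since $\pd\nu(X)=\Loeb\nu(\ST^{-1}(X))=\Loeb\nu(\NS{\NSE X})$; the relevant sets are Loeb measurable because $X$ is regular and $\sigma$-compact, so $\ST$ is Borel measurable). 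Taking $\Loeb\nu$ and using $\Loeb\nu(\NSE A\cap\NS{\NSE X})=\Loeb\nu(\NSE A)=\ST(\nu(\NSE A))$ then yields
\[
\pd\nu(\interior{A})\le \ST(\nu(\NSE A))\le \pd\nu(\cl(A)),
\]
whence $\pd\nu(\boundary A)=0$ forces $\ST(\nu(\NSE A))=\pd\nu(A)$.

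It then suffices to build, for each $n$, a countable Borel partition of $X$ into $\pd\nu$-continuity sets of diameter at most $\tfrac{1}{n}$. As a $\sigma$-compact metric space $X$ is separable; fix a countable dense set $\{x_j:j\in\Nats\}$. For a fixed centre $x_j$ the spheres $\{y:d(x_j,y)=r\}$ are pairwise disjoint across $r>0$, so only countably many carry positive $\pd\nu$-mass; since the interval $[\tfrac{1}{4n},\tfrac{1}{2n})$ is uncountable, I can choose $r_j$ in it with $\pd\nu(\{y:d(x_j,y)=r_j\})=0$. Writing $B_j\defas B(x_j,r_j)$, the inclusion $\boundary B_j\subseteq\{y:d(x_j,y)=r_j\}$ makes each $B_j$ a continuity set of diameter at most $\tfrac{1}{n}$, and because $r_j\ge\tfrac{1}{4n}$ and $\{x_j\}$ is dense the family $\{B_j\}$ covers $X$.

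Finally I would disjointify by setting $A_1=B_1$ and $A_i=B_i\setminus\bigcup_{j<i}B_j$ (discarding any empty members). Each $A_i$ is a finite Boolean combination of the $B_j$, so $\boundary A_i\subseteq\bigcup_{j\le i}\boundary B_j$ is $\pd\nu$-null and $A_i$ is again a continuity set, while $A_i\subseteq B_i$ keeps its diameter at most $\tfrac{1}{n}$. This $\{A_i\}$ is the required partition, and the first paragraph gives $\nu(\NSE{A_i})\approx\pd\nu(A_i)=\NSE{\pd\nu}(\NSE{A_i})$ for every $i$.

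The crux, and the reason the continuity-set detour cannot be avoided, is the mismatch between $\NSE A$ and $\ST^{-1}(A)$: the number $\nu(\NSE A)$ reports the Loeb mass of the internal set $\NSE A$, whereas $\pd\nu(A)$ reports the Loeb mass of $\ST^{-1}(A)$, and the two differ by whatever mass $\Loeb\nu$ places in the monads sitting over $\boundary A$. Forcing that discrepancy to vanish is exactly what drives the partition to be assembled from sets with null boundary; the one step that needs genuine care is checking that finitely many Boolean operations preserve both the continuity property and the Loeb measurability required to run the estimate of the first paragraph.
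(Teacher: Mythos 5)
Your proof is correct, and its skeleton matches the paper's: both reduce the lemma to showing that $\nu(\NSE{A})$ and $\NSE{\pd{\nu}}(\NSE{A})$ agree infinitesimally on $\pd{\nu}$-continuity sets, and both then manufacture a countable partition out of balls whose boundaries are $\pd{\nu}$-null (your dense-set-plus-bounded-below-radii construction and the paper's Lindel\"{o}f extraction from a family $\{U_x : x\in X\}$ of continuity balls are interchangeable). The interesting divergence is in the key claim. The paper proves it only for \emph{open} continuity sets $U$, by applying Anderson's measure-preservation lemma (\cref{stpreserve}) to $\NSE{\pd{\nu}}$ --- which requires first knowing that $\pd{\nu}$ is Radon (\cref{pushradon}) --- and then squeezing $\NSE{U}\cap\NS{\NSE{X}}$ between $\ST^{-1}(U)$ and $\ST^{-1}(\overline{U})$. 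You instead observe that $\NSE{\pd{\nu}}(\NSE{A})=\pd{\nu}(A)$ is an exact identity by transfer, so the claim is precisely $\ST(\nu(\NSE{A}))=\pd{\nu}(A)$, and you prove it for \emph{arbitrary} Borel continuity sets via the squeeze $\ST^{-1}(\interior{A})\subseteq\NSE{A}\cap\NS{\NSE{X}}\subseteq\ST^{-1}(\cl(A))$ together with $\Loeb{\nu}(\NS{\NSE{X}})=1$. This buys two things: you bypass \cref{stpreserve,pushradon} entirely, and your claim applies verbatim to the disjointified pieces $A_i=B_i\setminus\bigcup_{j<i}B_j$, which are not open. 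Note that the paper's claim, as literally stated, does not cover its own partition sets $V_i$ (also non-open); that gap is closed by internal finite additivity, writing each $V_i$ as a difference of two open continuity sets, but your more general formulation makes this extra step unnecessary. What the paper's route buys in exchange is reuse of machinery it needs elsewhere anyway.
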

\begin{proof}
Pick $n\in \Nats$.
For every $x\in X$, there are uncountably many open balls containing $x$ with diameter no greater than $\frac{1}{n}$.
The boundaries of these open balls form a uncountable collection of disjoint sets.
Thus, for every $x\in X$, we can pick an open ball $U_x$ containing $x$ such that its diameter is no greater than $\frac{1}{n}$ and it is a $\pd{\nu}$-continuity set.
As  $X$ is a Polish space, there is a countable subcollection of $\{U_{x}:x\in X\}$ that covers $X$.
Denote this countable subcollection by $\mathcal{K}_n=\{U_{x_i}:i\in \Nats\}$.

Pick $i,j\leq m$. Note that $\boundary(U_{x_i}\cap U_{x_j})\subset \boundary U_{x_i}\cup \boundary U_{x_j}$ and $\boundary(U_{x_i}\cup U_{x_j})\subset \boundary U_{x_i}\cup \boundary U_{x_j}$.
Hence any finite intersection(union) of elements from $\mathcal{K}_n$ is a bounded open $\pd{\nu}$-continuity set.
For any $i\in \Nats$, let
\[
V_i=U_{x_i}\setminus \bigcup_{j<i}(U_{x_j}\cap U_{x_i}).
\]
The following claim shows that $\{V_i: i\in \Nats\}$ is the desired partition.

\begin{claim}\label{pdcontinuity}
$\nu(\NSE{U})\approx \NSE{\pd{\nu}}(\NSE{U})$ for any open $\pd{\nu}$-continuity set $U$.
\end{claim}
\begin{proof}
Pick any open $\pd{\nu}$-continuity set $U$.
By \cref{pushradon}, $\pd{\nu}$ is a countably additive Radon probability measure.
As $U$ is a $\pd{\nu}$-continuity set, by \cref{stpreserve},we have
\[
\overline{\NSE{\pd{\nu}}}(\ST^{-1}(U))=\pd{\nu}(U)=\pd{\nu}(\overline{U})=\overline{\NSE{\pd{\nu}}}(\ST^{-1}(\overline{U})).
\]
By the construction of $\pd{\nu}$, we also have $\overline{\nu}(\ST^{-1}(U))=\pd{\nu}(U)$ and $\overline{\nu}(\ST^{-1}(\overline{U}))=\pd{\nu}(\overline{U})$.
Thus, we have $\overline{\nu}(\ST^{-1}(U))=\overline{\NSE{\pd{\nu}}}(\ST^{-1}(U))$ and $\overline{\nu}(\ST^{-1}(\overline{U}))=\overline{\NSE{\pd{\nu}}}(\ST^{-1}(\overline{U}))$.
As $U$ is an open set, we have $\ST^{-1}(U)\subset\NSE{U}\cap \NS{\NSE{X}}\subset \ST^{-1}(\overline{U})$.
As $\pd{\nu}$ is a countably additive probability measure, we have $\overline{\nu}(\NS{\NSE{X}})=1$.
Hence $\overline{\nu}(\NSE{U})=\overline{\nu}(\NSE{U}\cap \NS{\NSE{X}})$.
Hence
\[
\nu(\NSE{U})\approx \overline{\nu}(\NSE{U}\cap \NS{\NSE{X}})=\NSE{\pd{\nu}}(\NSE{U})
\]
for all open $\pd{\nu}$-continuity set $U$.
\end{proof}
Hence we have completed the proof. 
\end{proof}

We obtain the following result from \cref{stareq} and \cref{cpartition}.

\begin{theorem}\label{pdclose}
Suppose $X$ is a bounded $\sigma$-compact metric space. Let $\nu$ be an internal probability measure on $(\NSE{X},\NSE{\BorelSets X})$. Suppose $\pd{\nu}$ is a countably additive probability measure on $(X,\BorelSets X)$. Then $\NSE{W}(\nu, \NSE{\pd{\nu}})\approx 0$.
\end{theorem}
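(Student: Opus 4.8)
The plan is to apply \cref{stareq} with the countably additive probability measure $P$ appearing there taken to be $\pd{\nu}$ itself. By hypothesis $\pd{\nu}$ is a countably additive probability measure on $(X,\BorelSets X)$, so the standing assumption of \cref{stareq} on $P$ is met, and $X$ is bounded $\sigma$-compact exactly as required. Thus the whole task reduces to producing, for each $n\in\Nats$, the countable small-diameter partition on which $\NSE{\pd{\nu}}$ and $\nu$ agree up to an infinitesimal.

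That partition is precisely what \cref{cpartition} supplies. Since $X$ is a $\sigma$-compact metric space and $\pd{\nu}$ is countably additive (both hypotheses of \cref{cpartition} hold here), \cref{cpartition} yields, for each fixed $n$, a countable Borel partition $\{A_i:i\in\Nats\}$ of $X$ with each $A_i$ of diameter at most $\frac1n$ and with $\nu(\NSE{A_i})\approx\NSE{\pd{\nu}}(\NSE{A_i})$ for every $i$. Feeding these partitions (one for each $n$) into \cref{stareq} gives the conclusion $\NSE{W}(\nu,\NSE{\pd{\nu}})\approx 0$.

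The only point needing a word of care is a minor mismatch in the hypotheses: \cref{stareq} asks for the partition blocks to be \emph{non-empty}, whereas the blocks $V_i$ constructed in the proof of \cref{cpartition} may be empty for some indices. I would simply discard the empty blocks, since an empty block contributes $\nu(\NSE{\emptyset})=0=\NSE{\pd{\nu}}(\NSE{\emptyset})$ and changes nothing; deleting them leaves a countable partition into non-empty Borel sets of diameter at most $\frac1n$ retaining the infinitesimal-agreement property, which is what \cref{stareq} consumes.

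I do not expect any genuine obstacle at this step. All the substantive work has already been done upstream: the saturation-plus-overspill estimate that bounds the integrals block by block lives inside \cref{stareq}, and the $\pd{\nu}$-continuity-set construction together with the measure-preservation of $\ST$ (via \cref{stpreserve} and \cref{pushradon}) lives inside \cref{cpartition}. The theorem is therefore a clean composition of the two lemmas, with only the non-emptiness bookkeeping above requiring explicit mention.
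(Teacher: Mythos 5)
Your proposal is correct and is exactly the paper's argument: the paper derives \cref{pdclose} precisely by feeding the partitions produced by \cref{cpartition} (with $P=\pd{\nu}$) into \cref{stareq}. Your extra remark about discarding empty blocks is sound bookkeeping that the paper leaves implicit (and in the degenerate case where only finitely many non-empty blocks remain, \cref{finstareq} covers it), so there is no gap.
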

The compact version of \cref{cpartition,pdclose} were proved by \citet{nscredible}. The structure of the proofs are similar.

By \cref{intpdclose,pdclose}, we immediately obtain the following result.

\begin{theorem}\label{wassnotdistance}
Suppose $X$ is a bounded $\sigma$-compact metric space.
Let $\nu$ be an internal probability measure on $(\NSE{X},\NSE{\BorelSets X})$.
Let $\pd{\nu}$ and $\ipd{\nu}$ denote the external push-down and internal push-down of $\nu$, respectively.
Suppose $\pd{\nu}$ is a countably additive probability measure on $(X,\BorelSets X)$.
Then $W(\pd{\nu},\ipd{\nu})=0$.
\end{theorem}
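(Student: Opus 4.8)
The plan is to combine \cref{intpdclose,pdclose} with the triangle inequality for the Wasserstein distance and the elementary fact that a standard real number which is infinitesimal must be zero. Both $\pd{\nu}$ and $\ipd{\nu}$ are charges on $(X,\BorelSets X)$: the former by hypothesis (it is even countably additive), the latter by \cref{pushdownlemma}. Since $X$ is bounded, \cref{bdmsrb} guarantees that every $1$-Lipschitz function is integrable against either charge, so $W(\pd{\nu},\ipd{\nu})$ is a well-defined nonnegative standard real number via \cref{defnwas}.

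First I would record the triangle inequality and symmetry of $W$ on $\FM{X}$, both of which are immediate from its definition as the supremum of $|\int f\dee\mu-\int f\dee\nu|$ over $f\in\Lip{X}$. Being first-order properties of $W$ and $\FM{X}$, they pass by the transfer principle to $\NSE{W}$ on the internal charges $\NSE{\FM{X}}$. Applying the transferred triangle inequality to the three internal charges $\NSE{\pd{\nu}}$, $\nu$, and $\NSE{\ipd{\nu}}$ (all of which lie in $\NSE{\FM{X}}$, the last two because $\nu$ is by definition an element of $\NSE{\FM{X}}$ and $\ipd{\nu},\pd{\nu}$ are standard charges) yields
\[
\NSE{W}(\NSE{\pd{\nu}},\NSE{\ipd{\nu}})\leq \NSE{W}(\NSE{\pd{\nu}},\nu)+\NSE{W}(\nu,\NSE{\ipd{\nu}}).
\]
By \cref{pdclose} together with the symmetry of $\NSE{W}$ the first term on the right is infinitesimal, and by \cref{intpdclose} the second is infinitesimal, so the entire right-hand side is infinitesimal.

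The final step is to transport this conclusion back to the standard world. Since $\pd{\nu}$ and $\ipd{\nu}$ are standard charges, transfer gives $\NSE{W}(\NSE{\pd{\nu}},\NSE{\ipd{\nu}})=\NSE{(W(\pd{\nu},\ipd{\nu}))}$, and because $W(\pd{\nu},\ipd{\nu})$ is a standard real the Extension Principle identifies this with $W(\pd{\nu},\ipd{\nu})$ itself. Thus $W(\pd{\nu},\ipd{\nu})$ is a standard nonnegative real satisfying $W(\pd{\nu},\ipd{\nu})\lessapprox 0$, which forces $W(\pd{\nu},\ipd{\nu})=0$.

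I expect no serious obstacle here, as the statement is essentially a formal corollary of the two preceding theorems. The only points requiring care are verifying that $\NSE{W}$ inherits the triangle inequality and symmetry by transfer, so that the two ``distances to $\nu$'' may be added and reordered, and observing that the quantity being bounded is genuinely the nonstandard image of a single standard real, so that its being infinitesimal forces it to equal exactly zero rather than merely being small.
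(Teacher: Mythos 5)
Your proof is correct and takes essentially the same route as the paper, which derives this theorem directly from \cref{intpdclose,pdclose} — exactly the two results you combine. Your write-up merely makes explicit the routine details the paper leaves implicit: the transfer of symmetry and the triangle inequality to $\NSE{W}$, and the observation that a standard nonnegative real which is infinitesimal must equal zero.
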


As $\pd{\nu}$ and $\ipd{\nu}$ are different objects,  the Wasserstein distance is only a pseudometric on $\FM{X}$. In summary, we have the following result.

\begin{theorem}
Suppose $X$ is a bounded $\sigma$-compact metric space.
Let $\nu$ and $\mu$ be two internal probability measures on $(\NSE{X},\NSE{\BorelSets X})$.
Suppose both $\pd{\nu}$ and $\pd{\mu}$ are countably additive probability measures on $(X,\BorelSets X)$.
Then the following statements are equivalent:

\begin{enumerate}
\item $\pd{\nu}=\pd{\mu}$.
\item $\NSE{W}(\nu,\mu)\approx 0$.
\item $W(\ipd{\nu},\ipd{\mu})=0$
\item $W(\pd{\nu},\ipd{\mu})=0$
\end{enumerate}
\end{theorem}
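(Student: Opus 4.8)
The plan is to show that each of (2), (3), (4) is equivalent to (1), reducing everything to the triangle inequality for the Wasserstein pseudometric together with the three closeness results already established, applied to both $\nu$ and $\mu$: \cref{intpdclose} gives $\NSE{W}(\nu,\NSE{\ipd{\nu}})\approx 0$, \cref{pdclose} gives $\NSE{W}(\nu,\NSE{\pd{\nu}})\approx 0$, and \cref{wassnotdistance} gives $W(\pd{\nu},\ipd{\nu})=0$.

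First I would record two elementary facts. The distance of \cref{defnwas} satisfies the triangle inequality, since for any $1$-Lipschitz $f$ one has $|\int f\dee\mu-\int f\dee\rho|\leq|\int f\dee\mu-\int f\dee\nu|+|\int f\dee\nu-\int f\dee\rho|$, and taking the supremum over $f$ gives $W(\mu,\rho)\leq W(\mu,\nu)+W(\nu,\rho)$; by transfer the same inequality holds for $\NSE{W}$ on internal charges. Moreover, for standard charges $P_1,P_2$ we have $\NSE{W}(\NSE{P_1},\NSE{P_2})=W(P_1,P_2)$, so $\NSE{W}(\NSE{P_1},\NSE{P_2})\approx 0$ if and only if $W(P_1,P_2)=0$, the latter being a standard real.

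The one genuinely non-formal ingredient, which I would isolate as a lemma, is that $W$ separates countably additive Borel probability measures: if $P_1,P_2\in\PM{X}$ and $W(P_1,P_2)=0$, then $P_1=P_2$. Since $X$ is $\sigma$-compact and metric it is separable, and for each closed $F\subset X$ the functions $f_k(x)=\max\{0,1-k\,d(x,F)\}$ are Lipschitz and decrease pointwise to $\mathbf{1}_F$. As $W(P_1,P_2)=0$ forces equality of integrals against every Lipschitz function (rescaling a $k$-Lipschitz function to a $1$-Lipschitz one), we get $\int f_k\dee P_1=\int f_k\dee P_2$ for all $k$, and monotone convergence yields $P_1(F)=P_2(F)$ for every closed $F$, whence $P_1=P_2$.

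With these in hand the equivalences are bookkeeping. For (1)$\Leftrightarrow$(2), the triangle inequality gives $\NSE{W}(\nu,\mu)\leq\NSE{W}(\nu,\NSE{\pd{\nu}})+\NSE{W}(\NSE{\pd{\nu}},\NSE{\pd{\mu}})+\NSE{W}(\NSE{\pd{\mu}},\mu)$, where the outer terms are infinitesimal by \cref{pdclose} and the middle term equals $W(\pd{\nu},\pd{\mu})$; thus (1) makes the middle term $0$ and forces (2), while conversely the same inequality bounds $\NSE{W}(\NSE{\pd{\nu}},\NSE{\pd{\mu}})$ by $\NSE{W}(\nu,\mu)$ plus two infinitesimals, so (2) yields $W(\pd{\nu},\pd{\mu})=0$ and hence (1) by the separation lemma. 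For (1)$\Leftrightarrow$(3), \cref{wassnotdistance} applied to $\nu$ and to $\mu$ gives $W(\pd{\nu},\ipd{\nu})=0$ and $W(\pd{\mu},\ipd{\mu})=0$; the triangle inequality then yields $W(\pd{\nu},\pd{\mu})=W(\ipd{\nu},\ipd{\mu})$, and the separation lemma turns $W(\pd{\nu},\pd{\mu})=0$ into $\pd{\nu}=\pd{\mu}$. The equivalence (1)$\Leftrightarrow$(4) is identical: the triangle inequality together with the two vanishing distances gives $W(\pd{\nu},\ipd{\mu})=W(\pd{\nu},\pd{\mu})$. The main obstacle is the separation lemma, which is where the metric structure of $X$ is used; all remaining steps are routine applications of the triangle inequality to the already-proved closeness estimates.
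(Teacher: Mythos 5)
Your proof is correct and takes essentially the same route as the paper: both arguments reduce all four statements to $W(\pd{\nu},\pd{\mu})=0$ by chaining the already-established closeness results (\cref{pdclose}, \cref{intpdclose}, \cref{wassnotdistance}) through the triangle inequality for the Wasserstein pseudometric. The only difference is explicitness: the paper asserts without proof that $W$ separates countably additive probability measures and leaves the triangle-inequality bookkeeping and the transfer identity $\NSE{W}(\NSE{P_1},\NSE{P_2})=W(P_1,P_2)$ implicit, whereas you supply these details (your separation lemma via the functions $f_k(x)=\max\{0,1-k\,d(x,F)\}$ is exactly the standard argument the paper is invoking).
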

\begin{proof}
As $\pd{\nu}$ and $\pd{\mu}$ are countably additive probability measures on $(X,\BorelSets X)$, we have $\pd{\nu}=\pd{\mu}$ if and only if $W(\pd{\nu},\pd{\mu})=0$. By \cref{pdclose}, we know that $W(\pd{\nu},\pd{\mu})=0$ if and only if $\NSE{W}(\nu,\mu)\approx 0$. By \cref{intpdclose}, we have $\NSE{W}(\nu,\mu)\approx 0$ if and only if $W(\ipd{\nu},\ipd{\mu})=0$. By \cref{pdclose,wassnotdistance}, we have $\NSE{W}(\nu,\mu)\approx 0$ if and only if $W(\pd{\nu},\ipd{\mu})=0$ hence finishing the proof.
\end{proof}

\section{Weak Convergence of Charges}\label{weakcharge}
In this section, we define a notion of weak convergence for charges and show that every charge is a weak limit of a countable sequence of countably additive probability measures under moderate conditions.

We start by proving the result when the underlying space is a compact metric space.

\begin{lemma}\label{compactcharge}
Let $X$ be a compact metric space equipped with Borel $\sigma$-algebra $\BorelSets X$. For every charge $P$ on $(X,\BorelSets X)$, there is a countably-additive probability measure $\mu$ on $(X,\BorelSets X)$ such that $W(P,\mu)=0$
\end{lemma}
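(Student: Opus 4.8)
The plan is to realize the charge $P$ nonstandardly, push the result down to a genuine countably additive measure, and then transfer an infinitesimal Wasserstein bound back to the standard world. The first step is to set $\nu = \NSE{P}$. Since $P \in \FM{X}$, the transfer principle gives $\NSE{P} \in \NSE{\FM{X}}$, so $\nu$ is an internal probability measure on $(\NSE{X},\NSE{\BorelSets X})$, and trivially $\nu(\NSE{B}) = \NSE{P}(\NSE{B})$ for every Borel set $B$ (with exact equality, not merely up to an infinitesimal).

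Next I would verify that the external push-down $\mu \defas \pd{\nu}$ is a countably additive Borel probability measure. This is the one place where compactness is essential: by \cref{nonstcompact} every point of $\NSE{X}$ is near-standard, so $\NS{\NSE{X}} = \NSE{X}$, and therefore $\Loeb{\nu}(\NS{\NSE{X}}) = \Loeb{\nu}(\NSE{X}) = 1$. With this mass condition in hand, \cref{pushdown} shows that $\pd{\nu}$ is the completion of a countably additive regular Borel probability measure; I take $\mu$ to be its restriction to $\BorelSets X$, which is then a countably additive probability measure on $(X,\BorelSets X)$.

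Finally, since a compact metric space is bounded and $\sigma$-compact, and $\pd{\nu} = \mu$ is countably additive, the hypotheses of \cref{pdclose} are met, and it yields $\NSE{W}(\NSE{P}, \NSE{\mu}) \approx 0$. Because $P,\mu \in \FM{X}$ and $X$ is bounded, $W(P,\mu)$ is a well-defined standard real number by \cref{defnwas}, so the transfer principle gives $\NSE{W}(\NSE{P},\NSE{\mu}) = W(P,\mu)$. A standard real infinitely close to $0$ must equal $0$, so we conclude $W(P,\mu) = 0$, as required.

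The argument is short once the internal measure $\nu = \NSE{P}$ is identified, and the single genuinely substantive step is the second one, the countable additivity of $\pd{\nu}$; I expect this to be the main obstacle in spirit, even though here it is supplied directly by \cref{pushdown} together with the compactness identity $\NS{\NSE{X}} = \NSE{X}$. It is precisely compactness that keeps all of the Loeb mass on the near-standard part: without it, mass could escape to non-near-standard points, $\pd{\nu}$ would be only a sub-probability measure, and both the application of \cref{pdclose} and the final transfer step would break down.
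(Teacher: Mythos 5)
Your proposal is correct and follows essentially the same route as the paper: both realize the charge as $\nu=\NSE{P}$, take $\mu=\pd{(\NSE{P})}$ (countably additive by compactness), and finish with the Wasserstein machinery of \cref{stcharge}. The only cosmetic difference is that the paper cites \cref{wassnotdistance} together with the identity $P=\ipd{(\NSE{P})}$, whereas you apply \cref{pdclose} directly and conclude by transfer --- the same argument unwound by one layer.
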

\begin{proof}
Fix a charge $P$ on $(X,\BorelSets X)$. Let $\mu=\pd{(\NSE{P})}$. As $X$ is compact, $\mu$ defines a countably additive probability measure on $(X,\BorelSets X)$. We always have $P=\ipd{(\NSE{P})}$, so by \cref{wassnotdistance}, we have the desired result.
\end{proof}

To generalize \cref{compactcharge} to non-compact spaces, we need several results on nonstandard integration theory.
We start by quoting the following lemma.  

\begin{lemma}[{\citep[][Lemma.~6.5]{nsbayes}}]\label{newpushdownint}
Let $X$ be a compact Hausdorff space equipped with Borel $\sigma$-algebra $\BorelSets {X}$,
let $\nu$ be an internal probability measure on $(\NSE{X},\NSE{\BorelSets {X}})$,and
let $f : X \to \Reals$ be a bounded Borel measurable function. Define $g : \NSE{X} \to \Reals$ by $g(s) = f(\ST(s))$.
Then we have $\int f \dee \pd{\nu}=\int g \,\dee \Loeb{\nu}$.
\end{lemma}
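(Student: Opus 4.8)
The plan is to recognize the external push-down $\pd{\nu}$ as the image (pushforward) measure of the Loeb measure $\Loeb{\nu}$ under the standard part map $\ST$, and then to invoke the classical change-of-variables formula for image measures in the form $\int f\dee(\ST_*\Loeb{\nu})=\int f\circ\ST\dee\Loeb{\nu}$. Everything beyond this identification is routine measure theory, so the work is concentrated in setting up the pushforward correctly.

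First I would record the consequences of compactness. Since $X$ is compact Hausdorff, \cref{nonstcompact} gives $\NS{\NSE{X}}=\NSE{X}$, so $\ST$ is defined on all of $\NSE{X}$ and $\Loeb{\nu}(\NS{\NSE{X}})=\Loeb{\nu}(\NSE{X})=1$. A compact Hausdorff space is regular, so by \citep[][Thm.~4.3.2]{NSAA97} the map $\ST$ is Borel measurable, i.e. $\ST^{-1}(A)\in\Loeb{\NSE{\BorelSets X}}$ for every $A\in\BorelSets X$. In particular $\BorelSets X\subset\cC$, so $\pd{\nu}$ is defined on all Borel sets, and by \cref{pushdown} it is the completion of a countably additive Borel probability measure. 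By \cref{defnpushdown}, $\pd{\nu}(A)=\Loeb{\nu}(\ST^{-1}(A))$ for all $A\in\BorelSets X$; that is, $\pd{\nu}$ is precisely the pushforward $\ST_*\Loeb{\nu}$. I would also note here that $g=f\circ\ST$ is $\Loeb{\NSE{\BorelSets X}}$-measurable, being the composition of the measurable map $\ST$ with the Borel function $f$, and bounded since $f$ is; hence both integrals in the statement are finite and well-defined.

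The identity itself I would establish by the standard layer-by-layer argument. For an indicator $f=\bone_A$ with $A\in\BorelSets X$, one has $g=\bone_A\circ\ST=\bone_{\ST^{-1}(A)}$, so $\int g\dee\Loeb{\nu}=\Loeb{\nu}(\ST^{-1}(A))=\pd{\nu}(A)=\int f\dee\pd{\nu}$. By linearity the identity extends to simple functions. For a general bounded Borel measurable $f$, I would pick simple functions $f_k$ converging to $f$ uniformly; writing $g_k=f_k\circ\ST$ we have $\norm{g_k-g}_\infty=\norm{f_k-f}_\infty\to 0$, and since $\pd{\nu}$ and $\Loeb{\nu}$ are probability measures both integrals are continuous under uniform convergence, so passing to the limit yields $\int f\dee\pd{\nu}=\int g\dee\Loeb{\nu}$.

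The main obstacle is entirely front-loaded: it lies in establishing that $\ST$ is measurable and that $\pd{\nu}$ coincides with the image measure $\ST_*\Loeb{\nu}$ on $\BorelSets X$, both of which hinge on the compactness hypothesis through the identity $\Loeb{\nu}(\NS{\NSE{X}})=1$. Once the push-down has been identified as a genuine pushforward, the equality of integrals is just the classical image-measure change of variables and presents no further difficulty.
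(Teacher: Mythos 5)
Your proof is correct. Note that the paper itself states this lemma as a citation to \citep[Lemma.~6.5]{nsbayes} and gives no internal proof, so there is nothing to compare against line by line; your argument --- using compactness to get $\NS{\NSE{X}}=\NSE{X}$ and measurability of $\ST$, identifying $\pd{\nu}$ as the pushforward $\ST_{*}\Loeb{\nu}$ directly from \cref{defnpushdown}, and then running the indicator $\to$ simple $\to$ uniform-limit change-of-variables argument --- is precisely the standard route by which this result is established in the literature, and every step (in particular the Loeb measurability of $g=f\circ\ST$ and the uniform approximation of bounded Borel $f$ by simple functions) is sound.
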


%\begin{proof}
%For every $n\in \Nats$ and $k \in \Ints$,
%define $F_{n,k} = f^{-1}([\frac{k}{n},\frac{k+1}{n}))$ and $G_{n,k}=g^{-1}(\NSE{[\frac{k}{n},\frac{k+1}{n})})$.
%As $f$ is bounded, the collection $\cF_{n} = \{F_{n,k}: k\in \Ints\} \setminus \{\emptyset\}$ forms a finite partition of $X$, and similarly for
%$\cG_{n} = \{G_{n,k}:k\in \Ints\} \setminus \{\emptyset\}$ and $\NSE{X}$.
%For every $n \in \Nats$,
%define $\hat{f}_{n} : X \to \Reals$ and $\hat{g}_{n} : \NSE{X} \to \Reals$ by putting
%$\hat{f}_{n} = \frac{k}{n}$ on $F_{n,k}$ and
%$\hat{g}_{n} = \frac{k}{n}$ on $G_{n,k}$ for every $k \in \Ints$.
%Thus $\hat{f}_{n}$ (resp., $\hat{g}_{n}$) is a simple (resp., $\NSE{}$simple) function on the partition $\cF_{n}$ (resp., $\cG_{n}$).
%By construction $\hat f_{n} \le f < \hat{f}_{n} + \frac 1 n$ and $\hat{g}_{n} \le g < \hat{g}_{n} + \frac 1 n$.
%It is straightforward to check that
%$
%G_{n,k}=\ST^{-1}(F_{n,k})
%$
%for every $n\in \Nats$ and $k\in \Ints$
%so that $G_{n,k}$ is $\Loeb{\nu}$-measurable.
%It follows that $\int f \dee \nu_{p} = \lim_{n\to\infty} \int \hat{f}_{n} \dee \nu_{p}$ and $\int g \dee \Loeb{\nu} = \lim_{n \to \infty} \int \hat{g}_{n} \dee \Loeb{\nu}$.
%Moreover, by \cref{pushdown}, we have $\Loeb{\nu}(G_{n,k})=\nu_{p}(F_{n,k})$ for every $n\in \Nats$ and $k\in \Ints$.
%Thus, for every $n\in \Nats$ and $k\in \Ints$, we have $\int \hat f_n \dee \nu_{p} = \int \hat g_n \dee \Loeb{\nu}$. Hence we have $\int g \,\dee \Loeb{\nu}=\int f \dee \nu_{p}$, completing the proof.
%\end{proof}

\begin{theorem}\label{pdintcts}
Let $X$ be a compact Hausdorff space equipped with Borel $\sigma$-algebra $\BorelSets {X}$,
let $\nu$ be an internal probability measure on $(\NSE{X},\NSE{\BorelSets {X}})$,and
let $f : X \to \Reals$ be a bounded continuous function. Then we have $\int f \dee \pd{\nu}=\int \NSE{f} \,\dee \Loeb{\nu}$.
\end{theorem}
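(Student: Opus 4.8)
The plan is to reduce the statement to \cref{newpushdownint} by identifying the nonstandard integrand $\NSE{f}$, up to infinitesimals, with the function $g(s)=f(\ST(s))$ that already appears there. First I would apply \cref{newpushdownint} to the function $f$, which is bounded and continuous, hence bounded Borel measurable, to obtain $\int f \dee \pd{\nu}=\int g \dee \Loeb{\nu}$ with $g(s)=f(\ST(s))$. It then suffices to show $\int g \dee \Loeb{\nu}=\int \NSE{f}\dee \Loeb{\nu}$, where the right-hand side is read as the Loeb integral of the standard part $\SP{(\NSE{f})}$ (equivalently, as $\ST(\int \NSE{f}\dee \nu)$; since $f$ is bounded, $\NSE{f}$ is bounded by a standard constant and hence $S$-integrable, so the two readings agree).

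The crux is a pointwise identity $\SP{(\NSE{f})}=g$ on all of $\NSE{X}$. Because $X$ is compact, \cref{nonstcompact} guarantees that every $s\in \NSE{X}$ is near-standard, so $\ST(s)\in X$ is defined for every $s$ and $g$ is a genuine real-valued function on $\NSE{X}$. Fixing $s$ and writing $x=\ST(s)$, the point $s$ lies in the monad of $x$; since $f$ is continuous at $x$, the nonstandard characterization of continuity yields $\NSE{f}(s)\approx \NSE{f}(\NSE{x})=f(x)=f(\ST(s))=g(s)$. As $g(s)$ is a standard real and $\NSE{f}(s)$ is finite, taking standard parts gives $\SP{(\NSE{f})}(s)=g(s)$ for every $s\in \NSE{X}$. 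Thus $\SP{(\NSE{f})}$ and $g$ coincide as functions, so their Loeb integrals are equal, and chaining the equalities gives $\int f \dee \pd{\nu}=\int g \dee \Loeb{\nu}=\int \NSE{f}\dee \Loeb{\nu}$.

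The hard part here is not any estimate but getting the setup right: one must know that $\ST$ is defined everywhere (this is exactly where compactness enters, via \cref{nonstcompact}) and must fix the interpretation of $\int \NSE{f}\dee \Loeb{\nu}$ as the Loeb integral of the standard part of the internal function $\NSE{f}$. Once these are in place, continuity does all the work, converting the push-down integrand $g$ into $\NSE{f}$ up to an infinitesimal at every point, and no further approximation or limiting argument is required. I would expect the whole argument to be short, with essentially all of its content in the pointwise equality $\SP{(\NSE{f})}=g$.
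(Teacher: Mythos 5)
Your proposal is correct and takes essentially the same route as the paper: both apply \cref{newpushdownint} to reduce to the function $g=f\circ\ST$, and then use continuity of $f$ (with compactness guaranteeing every point of $\NSE{X}$ is near-standard) to get $\NSE{f}(s)\approx g(s)$ pointwise, whence the Loeb integrals agree. You simply spell out details the paper leaves implicit, namely the interpretation of $\int \NSE{f}\,\dee\Loeb{\nu}$ via the standard part of $\NSE{f}$ and the explicit appeal to \cref{nonstcompact}.
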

\begin{proof}
Since $X$ is compact, we can define $g: \NSE{X} \to \Reals$ by $g(s)=f(\ST(s))$. By \cref{newpushdownint}, we have $\int f \dee \nu_{p}=\int g \,\dee \Loeb{\nu}$. As $f$ is continuous, we have $\NSE{f}(x)\approx g(x)$ for all $x\in \NSE{X}$. Thus we have $\int g \,\dee \Loeb{\nu}=\int \NSE{f} \,\dee \Loeb{\nu}$, completing the proof.
\end{proof}

We now consider the relation between internal integration and integration with respect to internal push-down measures.

\begin{theorem}\label{pushdowneq}
Let $X$ be a metric space equipped with Borel $\sigma$-algebra $\BorelSets X$.
Let $\nu$ be an internal probability measure on $(\NSE{X},\NSE{\BorelSets X})$ and let $f: X\mapsto \Reals$ be a bounded Borel measurable function. Then we have
\[
\int_{\NSE{X}} \NSE{f}(x) \nu(\dee x)\approx \int_{X} f(x) \ipd{\nu}(\dee x).
\]
\end{theorem}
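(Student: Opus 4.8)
The plan is to establish the identity first for Borel simple functions, where it reduces directly to the defining formula for $\ipd{\nu}$, and then to obtain the general case by uniform approximation. Throughout, fix a standard $M$ with $\abs{f}\le M$ on $X$. Then the right-hand side is well defined: $\ipd{\nu}$ is a charge by \cref{pushdownlemma}, and $f$ is bounded measurable, so $\int_X f\,\dee\ipd{\nu}$ exists by \cref{bdmsrb}. The left-hand side is the transfer of the standard integration theory for bounded measurable functions, so $\int_{\NSE{X}}\NSE{f}\,\dee\nu$ is an internal real with $\abs{\int_{\NSE{X}}\NSE{f}\,\dee\nu}\le M$ (by transfer of $\abs{\int g\,\dee P}\le\norm{g}_\infty$). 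Hence the difference $D=\abs{\int_{\NSE{X}}\NSE{f}\,\dee\nu-\int_X f\,\dee\ipd{\nu}}$ is a finite element of $\NSE{\Reals}$, and it suffices to show $\ST(D)=0$.

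For the simple-function case, I would take $s=\sum_{i=1}^{k}c_i\Ind_{A_i}$ with $c_i\in\Reals$ and $\{A_i\}_{i\le k}$ a finite Borel partition of $X$. By the definition of the charge integral together with \cref{intpushdown}, $\int_X s\,\dee\ipd{\nu}=\sum_{i=1}^{k}c_i\,\ipd{\nu}(A_i)=\sum_{i=1}^{k}c_i\,\ST(\nu(\NSE{A_i}))$. On the nonstandard side, $\NSE{s}=\sum_{i=1}^{k}c_i\Ind_{\NSE{A_i}}$ is an internal simple function, so by transfer $\int_{\NSE{X}}\NSE{s}\,\dee\nu=\sum_{i=1}^{k}c_i\,\nu(\NSE{A_i})$. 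Since the sum is finite, each $c_i$ is standard, and $\nu(\NSE{A_i})\approx\ST(\nu(\NSE{A_i}))$, these two quantities are infinitely close; hence $\int_{\NSE{X}}\NSE{s}\,\dee\nu\approx\int_X s\,\dee\ipd{\nu}$.

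For the approximation step, fix a standard $\epsilon>0$. Because $f$ is bounded and measurable, there is a Borel simple function $s$ with $\norm{f-s}_\infty\le\epsilon$ (partition $[-M,M]$ into subintervals of length at most $\epsilon$ and take preimages). Since $\ipd{\nu}$ is a probability charge, $\abs{\int_X f\,\dee\ipd{\nu}-\int_X s\,\dee\ipd{\nu}}\le\epsilon$; and transferring $\norm{f-s}_\infty\le\epsilon$ yields $\abs{\NSE{f}(x)-\NSE{s}(x)}\le\epsilon$ for all $x\in\NSE{X}$, so $\abs{\int_{\NSE{X}}\NSE{f}\,\dee\nu-\int_{\NSE{X}}\NSE{s}\,\dee\nu}\le\epsilon$. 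Combining these two bounds with the simple-function estimate of the previous paragraph via the triangle inequality gives $D\lessapprox 2\epsilon$, i.e. $\ST(D)\le 2\epsilon$. As $\epsilon>0$ was an arbitrary standard real, $\ST(D)=0$, which is exactly $\int_{\NSE{X}}\NSE{f}\,\dee\nu\approx\int_X f\,\dee\ipd{\nu}$.

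The only genuinely nontrivial ingredient is ensuring that charge integration behaves as expected, namely linearity, the simple-function formula $\int\sum c_i\Ind_{A_i}\,\dee P=\sum c_i P(A_i)$, the uniform bound $\abs{\int g\,\dee P}\le\norm{g}_\infty$ for a probability charge, and the fact that a bounded measurable function is a uniform limit of simple functions whose integrals converge. These are imported from the theory of charges in \citep{charge} (the same source as \cref{bdmsrb}). Everything on the nonstandard side is routine transfer, together with the elementary observation that a finite sum of standard multiples of $\nu$-values is infinitely close to the corresponding sum of their standard parts; this is where the boundedness of $f$ and the finiteness of the partition are essential, since an infinite sum of infinitesimal errors need not be infinitesimal.
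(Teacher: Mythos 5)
Your proposal is correct and takes essentially the same approach as the paper's proof: both partition the range of $f$ into intervals of length at most $\epsilon$, pull back to obtain a finite Borel partition of $X$, reduce to a simple function with standard coefficients (where the claim follows from $\nu(\NSE{A_i})\approx\ST(\nu(\NSE{A_i}))=\ipd{\nu}(A_i)$ summed over finitely many terms), and conclude via the triangle inequality and the arbitrariness of $\epsilon$. The only difference is organizational: the paper carries out the simple-function computation inline within a three-term estimate rather than isolating it as a separate lemma.
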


\begin{proof}
Fix $\epsilon\in \PosReals$. Let $\{K_1,K_2,\dotsc,K_n\}$ be a partition of a large enough interval of $\Reals$ containing the range of $f$ such that every $K_i\in \{K_1,K_2,\dotsc,K_n\}$ is an interval with diameters no greater than $\epsilon$. For $i\leq n$, let $F_i=f^{-1}(K_i)$. Then $\{F_1,\dotsc,F_n\}\subset \BorelSets X$ is a partition of $X$ such that $|f(x)-f(x')|<\epsilon$ for every $x,x'\in F_i$ for every $i=1,2,\dotsc,n$.
Pick $x_{i}\in F_i$ for every $i=1,2,\dotsc,n$.
Define $g: X\mapsto \Reals$ by letting $g(x)=f(x_i)$ if $x\in F_i$ for every $i=1,2,\dotsc,n$.
Then $g$ is a simple bounded measurable real valued function on $X$.
Thus, by \cref{bdmsrb}, both $g$ and $f$ are integrable with respect to $\ipd{\nu}$.

We now have
$|\ \int_{\NSE{X}} \NSE{f}(x) \nu(\dee x)-\int_{X} f(x) \ipd{\nu}(\dee x)\ |\leq |\int \NSE{f}(x) \nu(\dee x)- \int \NSE{g}(x) \nu(\dee x)|
+|\int \NSE{g}(x) \nu(\dee x)-\int g(x) \ipd{\nu}(\dee x)|+|\int g(x) \ipd{\nu}(\dee x)-\int f(x) \ipd{\nu}(\dee x)|$
where all internal integrals are over $\NSE{X}$ and all standard integrals are over $X$.

By the transfer principle, we have $|\NSE{f}(x)-\NSE{f}(x')|<\epsilon$ for every $x,x'\in \NSE{F_i}$ for every $i=1,2,\dotsc,n$. Thus, we have $|\NSE{f}(x)-\NSE{g}(x)|<\epsilon$ for all $x\in \NSE{X}$.
Thus, we have $|\int_{\NSE{X}} \NSE{f}(x) \nu(\dee x)- \int_{\NSE{X}} \NSE{g}(x) \nu(\dee x)|\leq \int_{\NSE{X}}|\NSE{f}(x)-\NSE{g}(x)|\nu(\dee x)<\epsilon$. Similarly, we have $|\int_{X} g(x) \ipd{\nu}(\dee x)-\int_{X} f(x) \ipd{\nu}(\dee x)|<\epsilon$.  For the term $|\int_{\NSE{X}} \NSE{g}(x) \nu(\dee x)-\int_{X} g(x) \ipd{\nu}(\dee x)|$, we have:
\[
\int_{\NSE{X}} \NSE{g}(x) \nu(\dee x)&=\sum_{i=1}^{n}\int_{\NSE{F_i}} \NSE{g}(x) \nu(\dee x)\\
&=\sum_{i=1}^{n}\NSE{f}(x_i)\nu(\NSE{F_i})
=\sum_{i=1}^{n}f(x_i)\nu(\NSE{F_i})
\approx \sum_{i=1}^{n}f(x_i)\ipd{\nu}(F_i)\\
&=\sum_{i=1}^{n}\int_{F_i} g(x) \ipd{\nu}(\dee x)=\int_{X} g(x) \ipd{\nu}(\dee x)
\]
Thus, we have $|\int_{\NSE{X}} \NSE{f}(x) \nu(\dee x)-\int_{X} f(x) \ipd{\nu}(\dee x)|\lessapprox 2\epsilon$. As $\epsilon$ is arbitrary, we have $\int_{\NSE{X}} \NSE{f}(x) \nu(\dee x)\approx \int_{X} f(x) \ipd{\nu}(\dee x)$.
\end{proof}

The following corollary is a direct consequence of \cref{pdintcts,pushdowneq}:
\begin{corollary}\label{pdinteq}
Let $X$ be a compact Hausdorff space equipped with Borel $\sigma$-algebra $\BorelSets {X}$,
let $\nu$ be an internal probability measure on $(\NSE{X},\NSE{\BorelSets {X}})$,and
let $f : X \to \Reals$ be a bounded continuous function. Then we have $\int f \dee \pd{\nu}=\int f \dee \ipd{\nu}$.
\end{corollary}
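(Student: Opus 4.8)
The plan is to chain together \cref{pdintcts} and \cref{pushdowneq}, which respectively rewrite each of the two standard integrals $\int f\dee\pd{\nu}$ and $\int f\dee\ipd{\nu}$ in terms of a single nonstandard integral of $\NSE{f}$ against $\nu$. First I would check that the hypotheses transfer cleanly: since $X$ is compact Hausdorff and $f$ is continuous, its image is a compact subset of $\Reals$, so $f$ is bounded and Borel measurable, and both cited results apply. Note that although \cref{pushdowneq} is phrased for metric spaces, its proof only partitions the range of $f$ inside $\Reals$ and never uses the metric on $X$, so it is available here as well.

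Next, \cref{pdintcts} gives $\int f\dee\pd{\nu}=\int\NSE{f}\,\dee\Loeb{\nu}$, where the right-hand side is the Loeb integral of $\NSE{f}$. The crucial bridge is to relate this Loeb integral to the genuinely internal integral $\int_{\NSE{X}}\NSE{f}(x)\,\nu(\dee x)$ appearing in \cref{pushdowneq}. Because $f$ is bounded, say $\abs{f}\le M$ for some standard $M$, transfer gives $\abs{\NSE{f}}\le M$ throughout $\NSE{X}$; hence $\NSE{f}$ is $S$-integrable and the standard Loeb integration theorem yields $\int\NSE{f}\,\dee\Loeb{\nu}=\ST\bigl(\int_{\NSE{X}}\NSE{f}(x)\,\nu(\dee x)\bigr)$. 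I expect this identification to be the one genuinely technical point, since it is precisely where the Loeb integral supplied by \cref{pdintcts} and the internal integral supplied by \cref{pushdowneq} must be reconciled.

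Finally I would combine the pieces. \cref{pushdowneq} states $\int_{\NSE{X}}\NSE{f}(x)\,\nu(\dee x)\approx\int_X f(x)\,\ipd{\nu}(\dee x)$, and since the right-hand side is a standard real number it equals its own standard part; taking standard parts therefore gives $\ST\bigl(\int_{\NSE{X}}\NSE{f}\,\dee\nu\bigr)=\int_X f\dee\ipd{\nu}$. Concatenating the three equalities produces $\int f\dee\pd{\nu}=\int\NSE{f}\,\dee\Loeb{\nu}=\ST\bigl(\int_{\NSE{X}}\NSE{f}\,\dee\nu\bigr)=\int f\dee\ipd{\nu}$. As the two outermost quantities are standard reals, the infinitesimal discrepancy collapses and we obtain exact equality rather than mere nearness, which is the claim.
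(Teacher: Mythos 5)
Your proof is correct and is exactly the paper's intended argument: the paper obtains \cref{pdinteq} precisely by chaining \cref{pdintcts} with \cref{pushdowneq}, where the bridge you identify---$S$-integrability of the standardly bounded $\NSE{f}$, giving $\int \NSE{f}\,\dee \Loeb{\nu}=\ST\bigl(\int_{\NSE{X}}\NSE{f}\,\dee \nu\bigr)$, followed by taking standard parts of the near-equality from \cref{pushdowneq}---is the step the paper leaves implicit when it calls the corollary a ``direct consequence.'' Your remark that \cref{pushdowneq}, though stated for metric spaces, applies verbatim here because its proof only partitions the range of $f$ and never uses the metric on $X$ correctly patches a hypothesis mismatch (metric versus compact Hausdorff) that the paper passes over silently.
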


Before we establish the main result of this section, we introduce the following definition.

\begin{definition}\label{defwkconverge}
A sequence of charges $P_{n}$ is said to converge weakly to a charge $P$ if $\int f \dee P_{n}\to \int f \dee P$ for all bounded uniformly continuous real-valued function $f$.
\end{definition}

If $\{P_n\}_{n\in \Nats}$ and $P$ are countably additive probability measures, we have the following well-known result.
\begin{theorem}[The Portmanteau Theorem {\citep[][Thm.~10.1.1]{jeff06}}]\label{thmport}
Suppose $\{P_n\}_{n\in \Nats}$ is a sequence of countably additive probability measures and $P$ is a countably additive measure. Then the following are equivalent:
\begin{enumerate}
\item $\int f \dee P_n\to \int f \dee P$ for all bounded continuous functions $f$.
\item $\int f \dee P_n\to \int f \dee P$ for all bounded uniformly continuous functions $f$.
\item $P_n(A)\to P(A)$ for all $P$-continuity sets $A$.
\end{enumerate}
\end{theorem}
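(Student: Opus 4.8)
The plan is to prove the cyclic chain of implications $(1)\Rightarrow(2)\Rightarrow(3)\Rightarrow(1)$, working throughout in the ambient metric space $(X,d)$ that carries the measures. The implication $(1)\Rightarrow(2)$ is immediate, since every bounded uniformly continuous function is in particular bounded and continuous, so the hypothesis in $(1)$ applies to the smaller class appearing in $(2)$.

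For $(2)\Rightarrow(3)$, I would first pass through the equivalent one-sided formulation that $\limsup_{n} P_n(F)\le P(F)$ for every closed set $F$. Given such an $F$, the key device is the family of Lipschitz — hence bounded uniformly continuous — functions $f_k(x)=\max\{0,\,1-k\,d(x,F)\}$, which satisfy $\mathbf{1}_F\le f_k\le \mathbf{1}_{\{d(\cdot,F)<1/k\}}$ and decrease pointwise to $\mathbf{1}_F$ as $k\to\infty$. Applying $(2)$ to each $f_k$ gives $\limsup_n P_n(F)\le \lim_n \int f_k\,\dee P_n=\int f_k\,\dee P$, and monotone convergence as $k\to\infty$ yields $\limsup_n P_n(F)\le P(F)$; taking complements gives $\liminf_n P_n(G)\ge P(G)$ for open $G$. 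For a $P$-continuity set $A$ I would then use $P(\partial A)=0$ to get $P(A^{\circ})=P(A)=P(\overline A)$, and sandwich $P_n(A)$ between $P_n(A^{\circ})$ and $P_n(\overline A)$; the open and closed bounds together pin $P_n(A)$ to $P(A)$.

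For $(3)\Rightarrow(1)$, fix a bounded continuous $f$ with $a\le f\le b$ and $\epsilon>0$. The main idea is to discretize the range: choose a partition $a=t_0<\dots<t_m=b$ of mesh below $\epsilon$ whose cut points satisfy $P(f^{-1}(\{t_i\}))=0$, which is possible since at most countably many levels can carry positive mass under the pushforward $f_*P$. Because $f$ is continuous, $\partial\, f^{-1}\big([t_{i-1},t_i)\big)\subseteq f^{-1}(\{t_{i-1},t_i\})$, so each set $A_i=f^{-1}\big([t_{i-1},t_i)\big)$ is a $P$-continuity set and $(3)$ yields $P_n(A_i)\to P(A_i)$. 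Since the simple function $\sum_i t_{i-1}\mathbf{1}_{A_i}$ approximates $f$ uniformly to within $\epsilon$, I would conclude $\limsup_n\big|\int f\,\dee P_n-\int f\,\dee P\big|\le 2\epsilon$, and letting $\epsilon\downarrow 0$ closes the cycle.

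The step I expect to be most delicate is $(3)\Rightarrow(1)$: one must simultaneously control the mesh of the partition and steer its cut points away from the (possibly atomic) levels of $f_*P$, and then verify the boundary containment $\partial\, f^{-1}\big([t_{i-1},t_i)\big)\subseteq f^{-1}(\{t_{i-1},t_i\})$, which is exactly where continuity of $f$ is indispensable. The exchange of the limits in $k$ and $n$ in $(2)\Rightarrow(3)$ is a secondary subtlety, but it is harmless since only the one-sided $\limsup$ bound is needed there.
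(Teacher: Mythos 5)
Your argument is correct, but there is nothing in the paper to compare it against: the paper does not prove this statement at all. It is the classical Portmanteau theorem, quoted verbatim with a citation to Theorem~10.1.1 of \citet{jeff06} and used as a black box (e.g.\ inside the proof of \cref{wkconverge}). What you have written is essentially the standard textbook proof from such references: the trivial inclusion for $(1)\Rightarrow(2)$, the Lipschitz regularization $f_k(x)=\max\{0,1-k\,d(x,F)\}$ of indicators of closed sets for $(2)\Rightarrow(3)$ followed by the open/closed sandwich on a $P$-continuity set, and the level-set discretization for $(3)\Rightarrow(1)$. One small repair is needed in the last step: you cannot in general arrange $P\bigl(f^{-1}(\{t_i\})\bigr)=0$ while also insisting $t_0=a$ and $t_m=b$, because the endpoints themselves may be atoms of the pushforward $f_*P$ (for instance, if $f$ attains its maximum on a set of positive $P$-measure, the level $b$ is forced to carry mass). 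The fix is standard: choose $t_0<a$ and $t_m>b$ strictly, avoiding the at most countably many atomic levels, so that the sets $A_i=f^{-1}\bigl([t_{i-1},t_i)\bigr)$ genuinely partition $X$; the boundary containment $\partial A_i\subseteq f^{-1}(\{t_{i-1},t_i\})$ and the uniform $\epsilon$-approximation by the simple function are unaffected, and the rest of your argument closes as stated.
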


We also need the following theorem from point-set topology. 

\begin{theorem}[{\citep[][Thm.~1]{extfunction}}]\label{lipextend}
Let $f$ be a real-valued function defined on some closed subset $E$ of a metric space $S$.
Suppose $f$ is $M$-Lipschitz continuous for some $M\in \Reals$.
Then $f$ can be extended to a $M$-Lipschitz continuous function on $S$.
\end{theorem}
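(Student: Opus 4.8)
The plan is to give an explicit formula for the extension, following the classical McShane--Whitney construction, which in fact produces an $M$-Lipschitz extension of any Lipschitz function defined on an \emph{arbitrary} subset, so the closedness of $E$ is not even needed. Writing $d$ for the metric on $S$, I would define $F : S \to \Reals$ by
\[
F(x) = \inf_{y \in E} \bigl( f(y) + M\, d(x,y) \bigr).
\]
First I would check that $F$ is well-defined, i.e.\ that the infimum is a finite real number for each $x \in S$. Fixing any $y_0 \in E$ and combining the Lipschitz bound $f(y) \ge f(y_0) - M\, d(y,y_0)$ with the triangle inequality $d(x,y) \ge d(y_0,y) - d(x,y_0)$ shows that each term $f(y) + M\, d(x,y)$ is bounded below by $f(y_0) - M\, d(x,y_0)$, so the infimum is finite.

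Next I would verify the two remaining properties. For the extension property, fix $x \in E$: taking $y = x$ in the infimum gives $F(x) \le f(x)$, while the Lipschitz bound rewritten as $f(y) + M\, d(x,y) \ge f(x)$ for every $y \in E$ gives $F(x) \ge f(x)$, whence $F(x) = f(x)$. For the Lipschitz property, fix $x_1, x_2 \in S$; for each $y \in E$ the triangle inequality $d(x_1,y) \le d(x_2,y) + d(x_1,x_2)$ yields $f(y) + M\, d(x_1,y) \le \bigl( f(y) + M\, d(x_2,y) \bigr) + M\, d(x_1,x_2)$, and taking the infimum over $y$ gives $F(x_1) \le F(x_2) + M\, d(x_1,x_2)$. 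Swapping $x_1$ and $x_2$ gives the reverse inequality, so $|F(x_1) - F(x_2)| \le M\, d(x_1,x_2)$. Thus $F$ is $M$-Lipschitz continuous on $S$ and agrees with $f$ on $E$, which is exactly the claim.

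There is no genuine obstacle here, since this is a standard result quoted from the literature; the only points demanding care are the finiteness of the infimum (handled above by the two-sided Lipschitz estimate) and the correct chaining of triangle inequalities in the Lipschitz step. One remark worth recording is that the hypothesis that $E$ be closed plays no role in the argument, and that the dual formula $F(x) = \sup_{y \in E} \bigl( f(y) - M\, d(x,y) \bigr)$ furnishes an alternative extension with the same properties, which is convenient when one wants the pointwise-largest rather than pointwise-smallest $M$-Lipschitz extension.
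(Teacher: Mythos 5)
Your proof is correct and is exactly the classical McShane construction used in the reference the paper cites for this result (the paper itself quotes the theorem without proof): the extension $F(x)=\inf_{y\in E}\bigl(f(y)+M\,d(x,y)\bigr)$, with the finiteness, agreement-on-$E$, and Lipschitz checks carried out as you do. Your side remark that closedness of $E$ is superfluous is also accurate; the hypothesis appears in the statement only because that is how the cited source phrases it.
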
 

We are now able to establish the main result of this section.

\begin{theorem}\label{wkconverge}
Let $X$ be a separable metric space equipped with Borel $\sigma$-algebra $\BorelSets X$. Let $P$ be a charge on $(X,\BorelSets X)$. There is a sequence $\{P_n\}_{n\in \Nats}$ of finitely supported probability measures that converges to $P$ weakly if and only if $X$ is totally bounded. 
\end{theorem}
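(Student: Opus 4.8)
The statement is a biconditional which I read as follows: for a separable metric space $X$, \emph{every} charge on $(X,\BorelSets X)$ is a weak limit of finitely supported probability measures if and only if $X$ is totally bounded. I would prove the two implications separately; the forward (``if'') implication holds for the arbitrary given charge $P$, while for the reverse (``only if'') implication I would exhibit a single charge admitting no such sequence whenever total boundedness fails. For the forward direction, assume $X$ is totally bounded (hence bounded and separable). For each $n$ I cover $X$ by finitely many balls of radius $1/(2n)$, disjointify them into a finite Borel partition $\{F^n_1,\dots,F^n_{k_n}\}$ of diameters at most $1/n$, discard empty parts, pick $x^n_i\in F^n_i$, and set $P_n=\sum_i P(F^n_i)\,\delta_{x^n_i}$, which is a finitely supported probability measure by finite additivity. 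For bounded uniformly continuous $f$ and $\epsilon>0$, choosing $n$ so that $1/n$ is below the uniform-continuity modulus and using $\int f\dee P=\sum_i\int_{F^n_i}f\dee P$ (finite additivity, with integrability from \cref{bdmsrb}) gives
\[
\Big| \int f \dee P_n - \int f \dee P \Big| &\le \sum_i \int_{F^n_i} \big| f(x^n_i) - f(x) \big|\, P(\dee x) \\
&\le \epsilon \sum_i P(F^n_i) = \epsilon .
\]
Thus $P_n\to P$ weakly; this is the elementary standard counterpart of the finite-partition Wasserstein estimate of \cref{finstareq}.

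For the reverse direction I would argue contrapositively. If $X$ is not totally bounded, fix $\epsilon_0>0$ and an infinite $\epsilon_0$-separated sequence $(y_n)_{n\in\Nats}$, so the balls $B_n=B(y_n,\epsilon_0/2)$ are pairwise disjoint. Let $f_n(x)=\max\{0,\,1-\tfrac{2}{\epsilon_0}d(x,y_n)\}$, a $\tfrac{2}{\epsilon_0}$-Lipschitz tent supported in $B_n$ with $f_n(y_n)=1$; for $S\subseteq\Nats$ put $g_S=\sum_{n\in S}f_n$, which by disjointness of supports is again $\tfrac{2}{\epsilon_0}$-Lipschitz and bounded by $1$, hence a legitimate test function for weak convergence. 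Fixing a free ultrafilter $\cU$ on $\Nats$, define the charge $P(A)=1$ if $\{n:y_n\in A\}\in\cU$ and $P(A)=0$ otherwise; a short check shows $P\in\FM{X}$ and $\int\phi\dee P=\lim_{\cU}\phi(y_n)$ for bounded Borel $\phi$. In particular $\int g_\Nats\dee P=1$, while $\int g_F\dee P=0$ for every finite $F\subseteq\Nats$.

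Now suppose toward a contradiction that finitely supported $P_m\to P$ weakly, and set $q^m_n=\int f_n\dee P_m\ge 0$, so that $\int g_S\dee P_m=\sum_{n\in S}q^m_n$ for every $S$. Testing against $g_\Nats$ forces $\sum_n q^m_n\to 1$, while testing against each $g_{\{1,\dots,N\}}$ forces $\sum_{n\le N}q^m_n\to 0$; thus the finitely supported mass vectors $(q^m_n)_n$ have total mass tending to $1$ yet escape to infinity in the index $n$. I would then diagonalize: recursively choose $m_1<m_2<\cdots$ and $0=N_0<N_1<\cdots$ so that each $P_{m_k}$ places mass exceeding $0.85$ in the window $W_k=\{N_{k-1}<n\le N_k\}$, which is possible since $\sum_n q^{m_k}_n$ is near $1$, the mass on $\{1,\dots,N_{k-1}\}$ is small, and $P_{m_k}$ is finitely supported so its $q$-mass vanishes beyond some index. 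Taking $S^*=\bigcup_{k\text{ even}}W_k$, the quantity $\int g_{S^*}\dee P_{m_k}=\sum_{n\in S^*}q^{m_k}_n$ exceeds $0.85$ for even $k$ (using $W_k\subseteq S^*$) and stays below $0.15$ for odd $k$ (using $S^*\cap W_k=\emptyset$ and total mass $\le 1$), so it cannot converge, contradicting weak convergence, which forces $\int g_{S^*}\dee P_m\to\int g_{S^*}\dee P$.

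The forward direction is routine; the crux, and the step I expect to be most delicate, is the reverse one. The essential point is that weak convergence must hold \emph{simultaneously} for the entire family $\{g_S\}_{S\subseteq\Nats}$, and the single purely finitely additive charge $P$ spreads its unit mass to infinity along the separated sequence so that any candidate $P_m$ must concentrate its mass in $\bigcup_n B_n$ yet off every fixed finite block; choosing $S^*$ adversarially \emph{after} fixing the sequence then breaks convergence. Care is needed in verifying that each $g_S$ is uniformly continuous with a uniform Lipschitz constant, and in making the diagonalization bookkeeping precise so that the even and odd windows genuinely force oscillation between values above $0.85$ and below $0.15$.
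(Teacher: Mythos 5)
Your proposal is correct, and it reaches the theorem by a genuinely different route than the paper; your universal reading of the biconditional (every charge is approximable iff $X$ is totally bounded) is indeed the reading the paper's proof implements. The sharpest difference is in the forward direction: the paper never argues directly from $P$, but instead extends $P$ to the completion $\comp{X}$ (compact by total boundedness), invokes \cref{pushdown} to obtain the countably additive external push-down $\pd{(\NSE{P})}$, assigns the atoms of $P_n$ the masses $\pd{(\NSE{P})}(B^n_i)$, proves $P_n\to\pd{(\NSE{P})}$ weakly (\cref{wccompact}), and only then returns to $P$ through the nonstandard identity $\int f\dee \pd{(\NSE{P})}=\int f\dee \ipd{(\NSE{P})}$ of \cref{pdinteq}, using $P=\ipd{(\NSE{P})}$. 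Your Riemann-sum estimate, with atom masses $P(F^n_i)$ read off from $P$ itself, needs only finite additivity, monotonicity of the charge integral, and \cref{bdmsrb}; it is fully standard and bypasses the completion, the Loeb push-down, and \cref{pdinteq} altogether --- which is worth noting, since the introduction asserts this theorem has no known standard proof. In the reverse direction your core idea coincides with the paper's (an $\epsilon_0$-separated countable set $Y$, an ultrafilter charge carried by it, and an oscillation/diagonalization contradiction), but the implementations differ in a useful way: the paper renormalizes each approximant by $P_n(Y)$ so as to assume it concentrates on $Y$, then uses Lipschitz extension of indicators (\cref{lipextend}) to get $P_n(B)\to P(B)$ for every $B\subset Y$ before building its disjoint finite blocks $B_i$; however, weak convergence only forces mass into small neighborhoods of $Y$, not onto $Y$ itself (each $P_n(Y)$ could even be $0$), so that normalization step needs repair. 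Your disjointly supported tent functions and the profiles $q^m_n=\int f_n\dee P_m$ quantify ``mass near $y_n$'' without any concentration assumption, so your window argument avoids that difficulty entirely. Conversely, the paper's version applies to arbitrary countably additive approximants (a stronger non-approximability conclusion), whereas yours as written leans on finite supports to close each window $W_k$; this is harmless for the statement at hand, and your argument extends to the countably additive case by truncating the convergent series $\sum_n q^m_n$.
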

\begin{proof}
Suppose $X$ is totally bounded. 
In the case that $X$ is compact, the result follows immediately from \cref{pdinteq} by letting $\nu=\NSE{P}$.

In the case that $X$ is not compact, let $\comp{X}$ denote the completion of $X$. Then $\comp{X}$ is a compact space. 
We can extend $P$ to a charge on $(\comp{X},\BorelSets {\comp{X}})$ by letting $P(A)=P(A\cap \comp{X})$ for $A\in \BorelSets {\comp{X}}$. 
As $\comp{X}$ is compact, by \cref{pushdown}, 
$\pd{(\NSE{P})}$ is a countably additive probability measure on $(\comp{X},\BorelSets {\comp{X}})$.
Pick $n\in \Nats$.  As $X$ is totally bounded and dense in $\comp{X}$, there is a finite collection of open balls such that their closure covers $\comp{X}$.
Thus, we can decompose $\comp{X}$ into finitely many mutually disjoint Borel sets $\{B_{i}^{n}: i\leq m\}$ with diameters no greater than $\frac{1}{n}$ and each $B_i$ contains at least one element $x_i$ from $X$.
Define a finitely supported probability measure $P_n$ on $(\comp{X},\BorelSets {\comp{X}})$ by letting $P_n(\{x_i\})=\pd{\NSE{P}}(B_i)$ for every $i\leq m$.  
\begin{claim}\label{wccompact}
The sequence $\{P_n\}_{n\in \Nats}$ converges to $\pd{\NSE{P}}$ weakly. 
\end{claim}
\begin{proof}
Pick a uniformly continuous function $f: \comp{X}\to \Reals$ and a positive $\epsilon\in \Reals$. 
There exists $j\in \Nats$ such that $|f(x)-f(y)|<\epsilon$ if $d(x,y)<\frac{1}{j}$ for all $x,y\in \comp{X}$. 
Then, for every $n\geq j$, we have
\[
&|\int_{\comp{X}}f(x) \dee \pd{\NSE{P}}-\int_{\comp{X}}f(x) \dee P_{n}|\\ 
&=|\sum \int_{B_{i}^{n}}f(x) \dee \pd{\NSE{P}}- \sum \int_{B_{i}^{n}}f(x) \dee P_{k}|\\
&=|\sum \int_{B_{i}^{n}}f(x) \dee \pd{\NSE{P}}- \sum \int_{B_{i}^{n}}f(x_i) \dee \pd{\NSE{P}}|\\
&\leq \int_{B_{i}^{n}}|f(x)-f(x_i)|\dee \pd{\NSE{P}}<\epsilon
\]
By \cref{thmport}, we have the desired result. 
\end{proof}

We now show that the sequence $(P_n)_{n\in \Nats}$ converges to $P$ weakly.
Pick a bounded uniformly continuous function $f: X\to \Reals$. We can extend $f$ to a bounded continuous function $\comp{f}: \comp{X}\to \Reals$. By \cref{pdinteq,wccompact}, we have $\int_{\comp{X}} \hat{f} \dee P_{n}\to \int_{\comp{X}} \hat{f} \dee \pd{(\NSE{P})}=\int_{\comp{X}} \hat{f} \dee P$. As the supports of $P_n$ and $P$ are subsets of $X$, we have $\int_{\comp{X}}\hat{f} \dee P_{n}=\int_{X} f \dee P_{n}$ for all $n\in \Nats$ and $\int_{\comp{X}}\hat{f} \dee P=\int_{X} f \dee P$. Thus the sequence $\{P_n\}_{n\in \Nats}$ converges to $P$ weakly.

\vspace{0.5cm}

Now Suppose $X$ is not totally bounded. 
\begin{claim}\label{countdiscrete}
There exists a closed countably infinite discrete subset $Y$ of $X$ such that, for every pair of distinct points $y_1,y_2\in Y$, we have $d(y_1,y_2)>\epsilon$ for some fixed $\epsilon>0$. 
\end{claim}
\begin{proof}
We explicitly construct a countably infinite discrete set. 
We pick any element $x_1\in X$ in the first step.
Suppose we have picked $n$ distinct points $\{x_1,x_2,\dotsc,x_n\}$ up to step $n$.
As $X$ is not totally bounded, there exists $\epsilon\in \Reals$ such that there is no finite cover of $X$ by $\epsilon$-open balls.
Let $U_i$ denote the $\epsilon$-open ball centered at $x_i$ for $i\leq n$.  
Thus, the set $X\setminus \bigcup_{i=1}^{n}U_i$ is non-empty. 
Pick any element in $X\setminus \bigcup_{i=1}^{n}U_i$ to be $x_{n+1}$. 
Note that $d(x_{n+1},x_i)>\epsilon$ for all $i\leq n$. 
Thus, we have constructed a countable discrete set $Y\subset X$ such that $d(y_1,y_2)>\epsilon$ for every pair of distinct points $y_1,y_2\in Y$. Hence, $Y$ must be closed. 
\end{proof}
Let $\cA$ be a non-principal ultrafilter and let $P$ be a finitely additive probability measure concentrated on $Y$ such that $P(A)=1$ if and only if $A\in \cA$.  
Suppose there is a sequence $\{P_n\}_{n\in \Nats}$ of countably additive probability measures converge to $P$ weakly. 
As $P$ concentrates on $Y$, with out loss of generality, we can assume $P_n$ concentrates on $Y$ for every $n\in \Nats$ (replace $P_n$ by $\frac{P_n}{P_n(Y)}$ if necessary).
\begin{claim} 
The sequence $\{P_n(B)\}_{n\in \Nats}$ converges to $P(B)$ on every subset $B\subset Y$.
\end{claim}
\begin{proof}
Fix a set $B\subset Y$.
Let $f: Y\to \Reals$ be the indicator function on $B$. 
By \cref{countdiscrete}, $f$ is a Lipschitz continuous function.
By \cref{countdiscrete} and \cref{lipextend}, $f$ can be extended to a Lipschitz continuous function from $X$ to $\Reals$.
As $\{P_n\}_{n\in \Nats}$ converges to $P$ weakly, we have $P_n(B)\to P(B)$ for every $B\subset Y$.
\end{proof}
 
As $P_1$ is countably additive, there is a finite set $B_1\subset Y$ such that $P_1(B_1)>\frac{3}{4}$. 
As $P(B_1)=0$, there exists $n_2\in \Nats$ such that $P_{n_2}(B_1)<\frac{1}{4}$. 
We can pick a finite set $B_2\subset Y$ such that $B_2\cap B_1=\emptyset$ and $P_{n_2}(B_2)>\frac{3}{4}$. 
Following this procedure, we can extract a subsequence $\{P_{n_i}: i\in \Nats\}$ from $\{P_n\}_{n\in \Nats}$ and construct a sequence of finite sets $\{B_i: i\in \Nats\}$ such that
\begin{enumerate}
\item $B_i\cap \bigcup_{j<i}B_{j}=\emptyset$ for all $i\geq 2$.
\item $P_{n_i}(B_i)>\frac{3}{4}$ for all $i\in \Nats$.
\item $P_{n_i}(\bigcup_{j<i}B_{j})<\frac{1}{4}$ for all $i\geq 2$.
\end{enumerate}
Let $A=\bigcup_{i\in \Nats}B_{2i}$. Then $P(A)$ is either $0$ or $1$ but $\{P_{n_i}(A)\}_{i\in \Nats}$ is oscillating. 
\end{proof}

The Portmanteau theorem (\cref{thmport}) gives three equivalent statements for countably additive probability measures.
It is natural to ask whether the same is true for charges. The following example shows that it is not the case.

\begin{example}\label{ezcounter}
Consider the unit interval $(0,1]$ equipped with Borel $\sigma$-algebra $\BorelSets {(0,1]}$, let $P$ be the internal push-down of the internal probability measure concentrating on $\frac{1}{2N}$ where $N\in {^{*}\Nats\setminus \Nats}$. For $n\in \Nats$, let $P_n$ be the degenerate measure at $\frac{1}{n}$. Note that the sequence $(P_n)_{n\in \Nats}$ converges weakly to $\pd{(\NSE{P})}$. As $\pd{(\NSE{P})}(\{0\})=1$, using the same proof after \cref{wccompact}, we know that $\int f \dee P_n\to \int f \dee P$ for bounded uniformly continuous $f$. On the other hand, the sequence $\int \sin{(1/x)} \dee P_n$ does not converge although $\sin{(1/x)}$ is bounded continuous on $(0,1]$. Let $A=\{\frac{1}{2n}: n\in \Nats\}$. As $P(\{0\})=0$, we know that $A$ is a continuity set. Note that $P_{n}(A)=1$ if $n$ is even and $P_{n}(A)=0$ if $n$ is odd. So the sequence $(P_{n}(A))_{n\in \Nats}$ does not converge.
\end{example}

In light of \cref{wkconverge,ezcounter}, it is natural to ask the following two questions. Suppose $P$ is a charge on a locally compact separable metric space $X$ with Borel $\sigma$-algebra $\BorelSets X$, does there exist a sequence $\{P_n\}_{n\in \Nats}$ of countably additive probability measures such that $\int f \dee P_n\to \int f\dee P$ for every bounded continuous function $f$? Does there exist a sequence $\{P_n\}_{n\in \Nats}$ of countably additive probability measures such that $P_n(A)\to P(A)$ for every $P$-continuity set $A$? These questions are answered by Miklos Laczkovich, who communicated the following example.

\begin{example}[Communicated by Miklos Laczkovich]\label{mikexample}
Let $P$ be a charge on all subsets of $\Nats$ such that $P(\{k\})=0$ for all $k\in \Nats$ and $P(\{\Nats\})=1$ (Such $P$ can be constructed via an ultrafiler on the set of natural numbers). We extend $P$ to all subsets of $\Reals$ by letting $P(A)=P(A\cap \Nats)$. We claim that there is no countable sequence $\{P_n\}_{n\in \Nats}$  of countably additive measures on $\Reals$ such that $\int f \dee P_n\to \int f \dee P$ for every bounded continuous function $f$.

\begin{proof}
Suppose there exists a sequence of $P_n$ of countably additive probability measures on $\Reals$ such that $\int f \dee P_n\to \int f \dee P$ for any bounded function $f$. We first show that if $k\geq 1$ is an integer and $0<d<1$, then $P_n([k-d,k+d])\to 0$ as $n\to \infty$. For let $d<e<1$ and let $f\geq 0$ be a continuous function which equals to 1 in $[k-d,k+d]$ and equals to $0$ outside $(k-e,k+e)$. Then $P_n([k-d,k+d])\leq \int f \dee P_n$, which converges to $\int f \dee P=0$. 

We now show that $P_{n}(\Nats)\to 1$ as $n\to \infty$. Assume that this is not valid. After passing to a subsequence, we have $P_{n}(\Nats)<1-a$ for all $n\in \Nats$, where $a>0$. Then there exists $0<d_1<1$ such that $P_{1}(U(\Nats, d_{1}))<1-\frac{a}{2}$, where $U(\Nats,d_{1})=\bigcup_{n\in \Nats}[n-d_{1},n+d_{1}]$. Let $n_1=1$. As $P_{n}([1-d_{1},1+d_{1}])$, there exists $n_2>1$ such that $P_{n_2}([1-d_{1},1+d_{1}])<\frac{a}{4}$. As $P_{n_{2}}(\Nats)<1-a$, there exists $0<d_2<d_1$ such that 
\[
P_{n_2}([1-d_{1},1+d_{1}]\cup U(\Nats, d_{2}))<1-\frac{a}{4}.
\]
As $P_{n}([1-d_{1},1+d_{1}])\to 0$ and $P_{n}([2-d_{2},2+d_{2}])\to 0$, there exists an $n_3>n_2$ such that 
\[
P_{n_3}([1-d_{1},1+d_{1}]\cup [2-d_{2},2+d_{2}])<\frac{a}{4}.
\]
Since $P_{n_3}(\Nats)<1-a$, there exists $0<d_3<d_2$ such that 
\[
P_{n_3}([1-d_{1},1+d_{1}]\cup [2-d_{2},2+d_{2}]\cup U(\Nats,d_{3}))<1-\frac{a}{4}.
\]
Continuing this procedure, we get positive numbers $d_k$ and indices $n_k$ such that $P_{n_{k}}(A)<1-\frac{a}{4}$ for all $k$, where $A=\bigcup_{k=1}^{\infty}[k-d_k,k+d_k]$. As there exists a bounded continuous function $f$ which equals to $1$ on $\Nats$ and $0$ outside of $A$, we have $\int f(x) P_{n_{k}}(\dee x)\leq P_{n_{k}}(A)<1-\frac{a}{4}$ for all $k$, contradicting with the fact that $\int f(x) P_{n_{k}}(\dee x)\to \int f(x) P(\dee x)=1$. 

Thus $P_{n}(\Nats)\to 1$ as $n\to \infty$. If $E\subset \Nats$, then $\lim\sup_{n\to \infty}P_n(E)\leq P(E)$ for all $n$. For, if $0\leq f\leq 1$ is a continuous function which equals to $1$ on $E$ and $0$ on $\Nats\setminus E$, then 
\[
P_{n}(E)\leq \int f(x) P_{n}(\dee x)\to \int f(x) P(\dee x)=P(E).
\]
This implies that $P_{n}(E)\to P(E)$. For, if $\lim\inf_{n\to \infty}P_{n}(E)<P(E)$, then from
\[
\lim\sup_{n\to \infty}P_{n}(\Nats\setminus E)\leq P(\Nats\setminus E)
\]
and from additivity it follows that $\lim\inf_{n\to \infty}P_{n}(E)<1$, which is impossible. 

Thus, we have $P(E)=\lim_{n\to \infty}P_{n}(E)$ for all $E\subset \Nats$. By Vitali-Hahn-Saks theorem, $P$ is countably additive, a contradiction.
\end{proof}

By using a similar argument, we can show that there is no 
countable sequence $\{P_n\}_{n\in \Nats}$ of countably additive probability measures on $\Reals$ such that $P_n(A)\to P(A)$ for every $P$-continuity set $A$.
\end{example}

\cref{mikexample} shows that \cref{wkconverge} is sharp. This implies that, when we talk about weak convergence to a charge, it is necessary to restrict ourselves to bounded uniformly continuous real-valued functions.

We conclude this paper by giving a nonstandard characterization of weak convergence defined in \cref{defwkconverge}.
\citet{nsweak} gave a nonstandard characterization of weak convergence of countably additive probability measures. The main result of \cite{nsweak} is the following:

\begin{theorem}[{\citep[][Thm.~4]{nsweak}}]\label{nswkconverge}
Let $X$ be a metric space equipped with Borel $\sigma$-algebra $\BorelSets X$.
Let $\{P_n\}_{n\in \Nats}$ be a sequence of countably additive probability measures on $(X,\BorelSets X)$.
Then the following are equivalent:
\begin{enumerate}
\item $\{P_n\}_{n\in \Nats}$ converges weakly to some countably additive probability measure on $(X,\BorelSets X)$.
\item For all infinite $N_1,N_2\in \NSE{\Nats}$, we have $\pd{(\NSE{P}_{N_1})}(X)=1$ and $\pd{(\NSE{P}_{N_1})}=\pd{(\NSE{P}_{N_2})}$.
\end{enumerate}
\end{theorem}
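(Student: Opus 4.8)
The plan is to route everything through the nonstandard characterization of convergent real sequences and to identify the internal quantity $\int \NSE{f}\,\dee\NSE{P}_N$ with an integral against the push-down measure $\pd{(\NSE{P}_N)}$. First I would record the elementary fact that a real sequence $(a_n)_{n\in\Nats}$ converges to $a$ if and only if $\NSE{a}_N\approx a$ for every infinite $N\in\NSE{\Nats}$; applying this to $a_n=\int f\,\dee P_n$, whose internal extension at $N$ is $\int \NSE{f}\,\dee\NSE{P}_N$ by transfer, reduces \emph{weak convergence of $(P_n)$ to $P$} to the single assertion that $\int \NSE{f}\,\dee\NSE{P}_N\approx\int f\,\dee P$ for every bounded continuous $f$ and every infinite $N$. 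Next I would prove the bridging lemma: if $\nu$ is an internal probability measure with $\Loeb{\nu}(\NS{\NSE{X}})=1$ (equivalently $\pd{\nu}(X)=1$), then $\int_{\NSE{X}}\NSE{f}\,\dee\nu\approx\int_X f\,\dee\pd{\nu}$ for every bounded continuous $f$. Since $\NSE{f}$ is internal and bounded by a standard constant it is $S$-integrable, so $\int\NSE{f}\,\dee\nu\approx\int\SP{(\NSE{f})}\,\dee\Loeb{\nu}$; continuity of $f$ gives $\SP{(\NSE{f})}(x)=f(\ST(x))$ on $\NS{\NSE{X}}$, and since $\Loeb{\nu}$ concentrates on $\NS{\NSE{X}}$ the latter integral equals $\int_X f\,\dee\pd{\nu}$. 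This is the non-compact analogue of \cref{pdintcts}.

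For (2) $\Rightarrow$ (1): since all push-downs coincide and carry full mass, set $P\defas\pd{(\NSE{P}_N)}$, a countably additive Borel probability measure by the push-down machinery (cf. \cref{pushdown}) and independent of the infinite $N$ chosen. The bridging lemma yields $\int\NSE{f}\,\dee\NSE{P}_N\approx\int f\,\dee P$ for every bounded continuous $f$ and every infinite $N$, and the sequence characterization then gives $\int f\,\dee P_n\to\int f\,\dee P$. Hence $(P_n)$ converges weakly to the countably additive measure $P$.

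For (1) $\Rightarrow$ (2): assume $P_n$ converges weakly to a countably additive probability measure $P$, so $\int\NSE{f}\,\dee\NSE{P}_N\approx\int f\,\dee P$ for all bounded continuous $f$ and infinite $N$. The crux is to show $\Loeb{\NSE{P}_N}(\NS{\NSE{X}})=1$, i.e. that no mass escapes the near-standard part. I would use inner regularity of $P$ by compacta: given $\epsilon>0$, choose compact $K$ with $P(K)>1-\epsilon$ and a bounded uniformly continuous $f$ with $\bone_K\leq f\leq\bone_{K^\delta}$ on a small neighbourhood $K^\delta$; then $\int\NSE{f}\,\dee\NSE{P}_N\gtrapprox 1-\epsilon$, while $\NSE{f}$ is supported on $\NSE{(K^\delta)}$. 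By the nonstandard characterization of compactness (\cref{nonstcompact}) one has $\NSE{K}\subseteq\NS{\NSE{X}}$ and $\ST^{-1}(K)=\bigcap_{\delta}\NSE{(K^\delta)}\subseteq\NS{\NSE{X}}$, so letting $\delta\downto0$ gives $\Loeb{\NSE{P}_N}(\ST^{-1}(K))\geq 1-\epsilon$ and then $\epsilon\downto0$ forces $\pd{(\NSE{P}_N)}(X)=\Loeb{\NSE{P}_N}(\NS{\NSE{X}})=1$. The bridging lemma now gives $\int f\,\dee\pd{(\NSE{P}_N)}=\ST\!\big(\int\NSE{f}\,\dee\NSE{P}_N\big)=\int f\,\dee P$ for every bounded continuous $f$; since a finite Borel measure on a metric space is determined by its integrals against bounded uniformly continuous functions, $\pd{(\NSE{P}_N)}=P$ for every infinite $N$, establishing both clauses of (2).

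The step I expect to be the main obstacle is the near-standard concentration $\Loeb{\NSE{P}_N}(\NS{\NSE{X}})=1$ in (1) $\Rightarrow$ (2): it is the nonstandard incarnation of tightness and depends on inner regularity of the limit $P$ by compact sets. On a general (non-Polish) metric space a countably additive Borel probability measure need not be Radon, so some care — or an auxiliary hypothesis such as completeness — is required to run the compact-approximation squeeze cleanly. The remaining steps are routine bookkeeping with $S$-integrability and the measure-determining property of bounded continuous functions.
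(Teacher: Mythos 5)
The paper does not prove this theorem: it is quoted from \citet{nsweak} (Thm.~4), so there is no in-house proof to compare yours against, and your proposal must be judged on its own merits. Your strategy is the classical one for this result, and direction (2)$\Rightarrow$(1) is complete as you sketch it: the hypothesis $\pd{(\NSE{P}_{N})}(X)=1$ presupposes and delivers $\NS{\NSE{X}}\in\Loeb{\NSE{\BorelSets X}}$ with full Loeb measure, whence $\ST$ is measurable (metric spaces are regular), \cref{pushdown} makes the common push-down $P$ countably additive, and your $S$-integrability bridge together with the nonstandard criterion for convergence of real sequences gives weak convergence. The identification of the two clauses of (2) with measurability-plus-full-mass of the near-standard part is exactly right, given \cref{defnpushdown}.

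The obstacle you flag in (1)$\Rightarrow$(2) is genuine, and it is worth recording that it is a defect of the statement read over arbitrary metric spaces, not of your strategy. Your squeeze needs the limit $P$ to be tight, and in ZFC there are separable metric spaces carrying non-tight Borel probability measures: let $B\subseteq[0,1]$ be a Bernstein set and let $P$ on $\BorelSets{B}$ be the trace of Lebesgue outer measure, so that $P(K)=0$ for every compact $K\subseteq B$. The constant sequence $P_n=P$ satisfies (1) trivially. However, if $A$ is an internal subset of $\NS{\NSE{B}}$ with $K=\ST(A)$, then $K$ is compact (\cref{stcompact}), hence countable by the Bernstein property, and $A\subseteq\NSE{(K^{\delta})}$ for every standard $\delta>0$, forcing $\NSE{P}(A)\approx 0$; the same computation, with standard parts taken in $[0,1]$, shows every internal subset of $\NSE{B}\setminus\NS{\NSE{B}}$ has infinitesimal measure. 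So $\NS{\NSE{B}}$ has Loeb inner measure $0$ and outer measure $1$, is not Loeb measurable, and (2) fails while (1) holds. Thus the equivalence, with \cref{defnpushdown} as the definition of the push-down, requires the weak limit to be Radon; this is automatic on Polish or $\sigma$-compact metric spaces, in particular on the compact metrizable spaces $\comp{X}$ on which this paper actually invokes the theorem, and there your compact-approximation argument, using $\ST^{-1}(K)=\bigcap_{k}\NSE{(K^{1/k})}$ and continuity from above of the Loeb measure, closes correctly. With that (implicitly necessary) tightness hypothesis made explicit your proof is complete; without it there are simply no compact sets from which to launch the squeeze $\Loeb{\NSE{P}_{N}}(\ST^{-1}(K))\geq 1-\epsilon$.
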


By using \cref{nswkconverge}, we present the following result.

\begin{theorem}
Let $X$ be a locally compact separable metric space equipped with Borel $\sigma$-algebra $\BorelSets X$.
Let $\{P_n\}_{n\in \Nats}$ be a sequence of countably additive probability measures on $(X,\BorelSets X)$.
Then the following are equivalent:
\begin{enumerate}
\item $\{P_n\}_{n\in \Nats}$ converges weakly to some charge on $(X,\BorelSets X)$.
\item For all infinite $N_1,N_2\in \NSE{\Nats}$, we have $\pd{(\NSE{P}_{N_1})}=\pd{(\NSE{P}_{N_2})}$.
\end{enumerate}
\end{theorem}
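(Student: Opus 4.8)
The plan is to reduce the statement to the compact case settled by \cref{nswkconverge} by passing to the one-point compactification $\comp{X}=X\cup\{\infty\}$, which is a compact metric space because $X$ is locally compact, separable and metrizable. Viewing each $P_n$ as a Borel probability measure on $\comp{X}$ supported on $X$, the first step is to identify, for infinite $N$, the external push-down of $\NSE{P}_N$ computed inside $\comp{X}$. Since $X$ is locally compact, the monad of a point $x\in X$ sits inside $\NSE{K}$ for a compact neighbourhood $K$, so $\NS{\NSE{X}}=\bigcup_{K}\NSE{K}$ over compact $K\subseteq X$; combined with \cref{nonstcompact} this shows that a point of $\NSE{X}$ lies in the monad of $\infty$ if and only if it is not near-standard in $X$. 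Hence the $\comp{X}$-push-down of $\NSE{P}_N$ agrees with $\pd{(\NSE{P}_N)}$ on Borel subsets of $X$ and deposits the remaining mass $1-\pd{(\NSE{P}_N)}(X)$ as an atom at $\infty$. Two such push-downs in $\comp{X}$ therefore coincide exactly when the corresponding $X$-push-downs $\pd{(\NSE{P}_N)}$ coincide, so condition (2) is \emph{equivalent} to the hypothesis of \cref{nswkconverge} on $\comp{X}$, where the total-mass-one clause is automatic. By \cref{nswkconverge} this is in turn equivalent to weak convergence of $\{P_n\}$ on $\comp{X}$ to a countably additive probability measure $\comp{P}$.

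For the implication (1)$\Rightarrow$(2) I would test the push-downs against compactly supported Lipschitz functions, which are bounded and uniformly continuous, hence admissible in \cref{defwkconverge}. For such an $f$ the function $\NSE{f}$ is infinitesimal off $\NS{\NSE{X}}$, so the non-compact analogue of \cref{newpushdownint} gives $\ST\big(\int \NSE{f}\,\dee\NSE{P}_N\big)=\int f\,\dee\pd{(\NSE{P}_N)}$ for every infinite $N$. On the other hand, setting $a_n=\int f\,\dee P_n$, hypothesis (1) says $a_n$ converges, which holds iff $\NSE{a}_N$ is infinitely close to the limit for all infinite $N$. Combining the two facts shows $\int f\,\dee\pd{(\NSE{P}_N)}$ is the same real number for all infinite $N$. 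As compactly supported continuous functions determine a finite Radon measure (compactly supported Lipschitz functions being uniformly dense among them) and each $\pd{(\NSE{P}_N)}$ is Radon by \cref{pushradon}, the push-downs agree, giving (2).

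For (2)$\Rightarrow$(1) the reduction of the first paragraph supplies a weak limit $\comp{P}$ on $\comp{X}$; write $Q=\comp{P}|_{X}$ for its Radon sub-probability part on $X$ and $c=\comp{P}(\{\infty\})$ for the escaped mass. Any bounded uniformly continuous $f\colon X\to\Reals$ that possesses a limit $\ell$ at infinity extends to an element of $C(\comp{X})$, and writing $f=(f-\ell)+\ell$ with $f-\ell$ vanishing at infinity, a uniform approximation of $f-\ell$ by compactly supported Lipschitz functions yields $\int f\,\dee P_n\to\int f\,\dee Q+c\,\ell$. Thus $\{P_n\}$ already converges against this subclass, and one would define the candidate limit as the charge $Q$ plus a purely finitely additive charge of mass $c$ concentrated ``at infinity''.

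\textbf{The main obstacle} is to upgrade convergence from functions having a limit at infinity to \emph{all} bounded uniformly continuous functions. A bounded uniformly continuous $f$ need not extend to $\comp{X}$ — for instance $\sin$ on $\Reals$ — and the external push-down, which records the escaped mass only as the single number $c$ at $\infty$, cannot by itself pin down $\lim_n\int f\,\dee P_n$ for such oscillating $f$. I expect the decisive work to lie exactly here: one must analyze $\int \NSE{f}\,\dee\Loeb{\NSE{P}_N}$ over the remote part $\NSE{X}\setminus\NS{\NSE{X}}$ and show its standard part is independent of the infinite index $N$. This appears to need control of the \emph{internal} push-downs $\ipd{(\NSE{P}_N)}$ — which, unlike the external push-downs $\pd{(\NSE{P}_N)}$, register remote oscillation — rather than merely condition (2) as stated. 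Securing this step, or isolating the precise additional regularity of the remote behaviour that the hypothesis must encode, is where I would concentrate the argument.
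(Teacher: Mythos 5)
Your reduction to the one-point compactification and your argument for (1)$\Rightarrow$(2) are both sound. The latter is in fact a genuinely different and more direct route than the paper's: the paper proves (1)$\Rightarrow$(2) by extending everything to $\comp{X}$, showing via \cref{pdinteq} that $\{P_n\}$ converges weakly on $\comp{X}$ to the countably additive measure $\pd{(\NSE{P})}$, and then invoking \cref{nswkconverge}; you instead test the push-downs directly against compactly supported Lipschitz functions and conclude by Radon uniqueness (via \cref{pushradon}), which avoids the limit measure on $\comp{X}$ altogether. Also, the candidate limit you describe informally in (2)$\Rightarrow$(1) --- the restriction $Q$ plus a purely finitely additive charge of mass $c$ ``at infinity'' --- is exactly what the paper constructs rigorously: it takes $\nu$ to be $\NSE{\mu}$ with its atom at $a_0$ moved to a remote point $y\in\NSE{X}$ in the monad of $a_0$, and proposes the internal push-down $\ipd{\nu}$ as the limit charge.

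The obstacle you flagged in (2)$\Rightarrow$(1), however, is not a defect of your attempt; it is a genuine flaw in the paper's own proof. At precisely the point where you stopped, the paper asserts that every bounded uniformly continuous $f\colon X\to\Reals$ ``can be extended to a bounded continuous function $\comp{f}$ from $\comp{X}$ to $\Reals$'' --- which is exactly the claim you observed to be false ($\sin$ on $\Reals$ has no limit at infinity). Moreover, your suspicion that condition (2) as stated cannot suffice is correct, and no additional work can bridge the gap: take $X=\Reals$ with its usual metric and $P_n=\delta_n$. For every infinite $N$, the point $N$ is not near-standard, so $\pd{(\NSE{P}_N)}$ is the null measure on $(\Reals,\BorelSets{\Reals})$; hence (2) holds. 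Yet $f(x)=\cos(\pi x)$ is bounded and uniformly continuous while $\int f \dee P_n=(-1)^n$ oscillates, so (1) fails. The equivalence is true only under the weaker reading in which the test functions are those uniformly continuous with respect to the metric of $\comp{X}$ --- equivalently, the bounded continuous functions possessing a limit at infinity --- and under that reading both your argument and the paper's go through. So the ``decisive work'' you were planning to concentrate on does not exist: the correct repair is either to shrink the class of test functions in \cref{defwkconverge} as above, or to strengthen (2) to a condition that, as you suggest, records the remote oscillation that the external push-downs erase (e.g.\ a condition on the internal push-downs $\ipd{(\NSE{P}_N)}$).
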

\begin{proof}
Suppose that $\{P_n\}_{n\in \Nats}$ converges weakly to some charge $P$ on $(X,\BorelSets X)$. Let $\comp{X}=X\cup \{a_0\}$ denote the metric one-point compactification of $X$. We extend each $P_n$ and $P$ to $\comp{X}$ by defining $P(\{a_0\})=0$ and $P_n(\{a_0\})=0$ for all $n\in \Nats$. As $\comp{X}$ is compact, by \cref{pushdown}, we know that $\pd{(\NSE{P})}$ is a countably additive probability measure on $(\comp{X},\BorelSets {\comp{X}})$.
\begin{claim}
$\{P_n\}_{n\in \Nats}$ converges weakly to $\pd{(\NSE{P})}$ in $(\comp{X},\BorelSets {\comp{X}})$.
\end{claim}
\begin{proof}
Let $f$ be a bounded continuous function from $\comp{X}$ to $\Reals$. Let $g$ be the restriction of $f$ to $X$. Then $g$ is a bounded uniformly continuous function from $X$ to $\Reals$. By assumption, we have $\int_{X} g \dee P_n\to \int_{X} g \dee P$. Thus, we have $\int_{\comp{X}}f \dee P_n\to \int_{\comp{X}} f \dee P$. By \cref{pdinteq}, we have $\int_{\comp{X}} f\dee P=\int_{\comp{X}} f \dee \pd{(\NSE{P})}$, completing the proof.
\end{proof}
By \cref{nswkconverge}, we have $\pd{(\NSE{P}_{N_1})}=\pd{(\NSE{P}_{N_2})}$ for all infinite $N_1,N_2\in \NSE{\Nats}$.

Now suppose we have $\pd{(\NSE{P}_{N_1})}=\pd{(\NSE{P}_{N_2})}$ for all infinite $N_1,N_2\in \NSE{\Nats}$. For every $n\in \NSE{\Nats}$, $\NSE{P}_n$ can be extended to an internal probability measure on $(\NSE{\comp{X}},\NSE{\BorelSets {\comp{X}}})$ by letting $\NSE{P}_n(\{a_0\})=0$. By \cref{nswkconverge}, we know that the sequence $\{P_n\}_{n\in \Nats}$ converges weakly to a countably additive probability measure $\mu$ on $(\comp{X}, \BorelSets {\comp{X}})$. Pick an element $y\in \NSE{X}$ such that $y$ is in the monad of $a_0$. Define an internal probability measure $\nu$ on $(\NSE{\comp{X}},\NSE{\BorelSets {\comp{X}}})$ as following:
\begin{enumerate}
\item $\nu(A)=\NSE{\mu}(A)$ for all $A\in \NSE{\BorelSets {\comp{X}}}$ such that $\{a_0,y\}\cap A=\emptyset$.
\item $\nu(A)=\NSE{\mu}(A)$ for all $A\in \NSE{\BorelSets {\comp{X}}}$ such that $\{a_0,y\}\subset A$.
\item $\nu(A)=\NSE{\mu}(A)-\NSE{\mu}(\{a_0\})$ for all $A\in \NSE{\BorelSets {\comp{X}}}$ such that $y\not\in A$ and $a_0\in A$.
\item $\nu(A)=\NSE{\mu}(A)+\NSE{\mu}(\{a_0\})$ for all $A\in \NSE{\BorelSets {\comp{X}}}$ such that $y\in A$ and $a_0\not\in A$.
\end{enumerate}
By the internal definition principle and the fact that $\nu(\{a_0\})=0$, $\nu$ defines an internal probability measure on $(\NSE{X},\NSE{\BorelSets X})$. Thus, the internal push-down $\ipd{\nu}$ defines a charge on $(X,\BorelSets X)$.
\begin{claim} $\pd{\nu}=\mu$. \end{claim}
\begin{proof}
Pick a set $E\in \BorelSets {\comp{X}}$. Suppose that $y\not\in \ST^{-1}(E)$. Then we know that $a_0\not\in \ST^{-1}(E)$. By the definition of $\nu$, we have $\nu(B)=\NSE{\mu}(B)$ for all $B\in \NSE{\BorelSets {\comp{X}}}$ such that $B\subset \ST^{-1}(E)$.
By \cref{stpreserve}, we have
\[
\mu(E)&=\overline{\NSE{\mu}}(\ST^{-1}(E))\\
&=\sup\{\NSE{\mu}(B): B\in \NSE{\BorelSets {\comp{X}}}\wedge B\subset \ST^{-1}(E)\}\\
&=\sup\{\nu(B): B\in \NSE{\BorelSets {\comp{X}}}\wedge B\subset \ST^{-1}(E)\}=\overline{\nu}(\ST^{-1}(E)).
\]

Now suppose that $y\in \ST^{-1}(E)$. Then we have $a_0\in E\subset \ST^{-1}(E)$. Thus, we have $\nu(B)=\NSE{\mu}(B)$ for all $B\in \NSE{\BorelSets {\comp{X}}}$ such that $B\supset \ST^{-1}(E)$. By \cref{stpreserve}, we have
\[
\mu(E)&=\overline{\NSE{\mu}}(\ST^{-1}(E))\\
&=\inf\{\NSE{\mu}(B): B\in \NSE{\BorelSets {\comp{X}}}\wedge B\supset \ST^{-1}(E)\}\\
&=\inf\{\nu(B): B\in \NSE{\BorelSets {\comp{X}}}\wedge B\supset \ST^{-1}(E)\}=\overline{\nu}(\ST^{-1}(E)).
\]
As $E$ is arbitrary, we have the desired result.
\end{proof}
Pick a bounded uniformly continuous function $f$ from $X$ to $\Reals$. We can extend $f$ to a bounded continuous function $\comp{f}$ from $\comp{X}$ to $\Reals$. By assumption, we have
\[
\int_{X} f \dee P_n=\int_{\comp{X}} \comp{f} \dee P_n\to \int_{\comp{X}} \comp{f} \dee \mu.
\]
By \cref{pdinteq}, we have
\[
\int_{\comp{X}} \comp{f} \dee \mu=\int_{\comp{X}} \comp{f} \dee \ipd{\nu}=\int_{X} f \dee \ipd{\nu}.
\]
Hence we have shown that $\{P_n\}_{n\in \Nats}$ converges weakly to $\ipd{\nu}$ which is a charge on $(X,\BorelSets X)$, completing the proof.
\end{proof}

\printbibliography

%\doublespace

%\begin{supplement}
%
%\sname{Supplement A}\label{suppA}
%\stitle{Title of the Supplement A}
%\slink[url]{http://www.e-publications.org/ims/support/dowload/imsart-ims.zip}
%\sdescription{Dum esset rex in
%accubitu suo, nardus mea dedit odorem suavitatis. Quoniam confortavit
%seras portarum tuarum, benedixit filiis tuis in te. Qui posuit fines tuos}
%\end{supplement}

%%%%%%%
%% BIBLIOGRAPHY

\end{document}